\def\xx{\langle x \rangle}
\newcommand\N{{\mathbb N}}
\newcommand\R{{\mathbb R}}
\newcommand\C{{\mathbb C}}
\def\AA{{\mathcal A}}
\def\BB{{\mathcal B}}
\def\CC{{\mathcal C}}
\def\DD{{\mathcal D}}
\def\EE{{\mathcal E}}
\def\LL{{\mathcal L}}
\def\OO{{\mathcal O}}
\def\RR{{\mathcal R}}
\def\TT{{\mathcal T}}
\def\VV{{\mathcal V}}
\def\M{{\mathbb M}}
\def\BBB{{\mathscr{B}}}
\def\eps{{\varepsilon}}
\let\phi=\varphi
\let\epsilon=\varepsilon
\let\tilde=\widetilde
\let\Bbb=\mathbb
\newcommand{\wto}{\rightharpoonup}
\newtheorem{theo}{Theorem}
\newtheorem{prop}[theo]{Proposition}
\newtheorem{lem}[theo]{Lemma}
\newtheorem{cor}[theo]{Corollary}
\theoremstyle{definition}
\newtheorem{rem}[theo]{Remark}
\newcommand{\beqn}{\begin{equation}}
\newcommand{\eeqn}{\end{equation}}
\newcommand{\bear}{\begin{eqnarray}}
\newcommand{\eear}{\end{eqnarray}}
\newcommand{\bean}{\begin{eqnarray*}}
\newcommand{\eean}{\end{eqnarray*}}
\def\Nt{|\hskip-0.04cm|\hskip-0.04cm|}
\begin{document}

\title[Subcritical Fokker-Planck equation]{The Fokker-Planck equation \\
with subcritical confinement force}

\author{O. Kavian, S. Mischler, M. Ndao} 

\address[Otared Kavian]{Universit\'e Paris--Saclay; UVSQ \& CNRS, UMR 8100; Laboratoire de Math\'e\-matiques de Versailles; 45 avenue des Etats Unis; 78035 Versailles cedex, France.}
\email{kavian@math.uvsq.fr}

\address[St\'ephane Mischler]{Universit\'e Paris-Dauphine, Institut Universitaire de France (IUF), PSL Research University,
CNRS, UMR [7534], CEREMADE, 
Place du Mar\'echal de Lattre de Tassigny
75775 Paris Cedex 16, 
France.}
\email{mischler@ceremade.dauphine.fr}

\address[Mamadou Ndao]{Universit\'e Paris--Saclay; UVSQ \& CNRS, UMR 8100; Laboratoire de Math\'e\-matiques de Versailles; 45 avenue des Etats Unis; 78035 Versailles cedex, France.}
\email{ndao75019@yahoo.fr}

\begin{abstract} We consider the Fokker-Planck equation 
with subcritical confinement force field which may not derive from a potential function. 
We prove the existence of an equilibrium (in the case of a general force) and
we establish some subgeometric 
rate of convergence to the equilibrium 
(depending on the space to which belongs the initial datum) in many spaces. 
Our results generalize similar results introduced by Toscani, Villani \cite{MR1751701} and 
R\"ockner, Wang \cite{MR1856277} for some forces associated to a potential and 
extended by Douc, Fort, Guillin \cite{MR2499863} and Bakry,  Cattiaux, Guillin \cite{MR2381160}
for some general forces:  the spaces are more general, 
the rates are sharper.

\end{abstract}

\maketitle

\bigskip
\begin{center}
{\bf Version of \today}
\end{center}

\bigskip

\noindent
\textbf{AMS Subject Classification (2000)}:  47D06
One-parameter semigroups and linear evolution equations [See also
34G10, 34K30], 35P15 Estimation of eigenvalues, upper and lower
bounds, 47H20 Semigroups of nonlinear operators [See also 37L05,
47J35, 54H15, 58D07], 35Q84 Fokker-Planck equations.

\medskip\noindent
\textbf{Keywords}: Fokker-Planck equation; semigroup; weak Poincar\'e inequality; 
weak dissipativity; Krein-Rutman theorem; spectral mapping theorem.

%
%
%
%
%
	
\vspace{0.3cm}



\bigskip


\tableofcontents


%
%
%

\section{Introduction} 
\label{sec:intro}
\setcounter{equation}{0}
\setcounter{theo}{0}

In the present work,  we consider the Fokker-Planck equation 
\beqn\label{eq:FPeq} 
\partial_t f =  \LL f = \Delta f + {\rm div} (f\,{\bf F})
\eeqn
on the density function $f = f(t,x)$, $ t > 0$, $x \in \R^d$, $d \ge 1$, in the case of a subcritical confinement force ${\bf F}$, and which is complemented with an initial condition
\beqn\label{eq:CondInit} 
f(0,x) = f_0 (x), \quad \forall \, x \in \R^d.
\eeqn
More precisely, we will always assume that the force field ${\bf F} \in C^1(\R^d,\R^d)$ satisfies  
\beqn\label{eq:CondChamp} 
x \cdot {\bf F}(x) \ge { |x| \, \langle x \rangle^{\gamma-1}}, \quad {\rm div} ({\bf F}(x))  \le C_F \, |x|^{\gamma-2}, \quad \forall \, x \in B_{R_0}^c\, ,
\eeqn
as well as
 \beqn\label{eq:CondChamp2} 
 |D {\bf F}(x)|  \le C'_F \,\langle x \rangle^{\gamma-2}, \quad \forall \, x \in \R^d, 
\eeqn
for some constants $C_F \geq d$, $R_0 > 0$, $C'_F > 0$ and an exponent  
\begin{equation}\label{eq:CondChamp2-b}
 \gamma \in (0,1).
\end{equation}
Here and below,  we denote $\langle x \rangle := (1 + |x|^2)^{1/2}$ for any $x \in \R^d$. 

\smallskip
It is worth mentioning that we have made two normalization 
hypotheses by taking a diffusion coefficient equal to $1$ in \eqref{eq:FPeq} as well as a lower bound constant equal to $1$ in the first condition in  \eqref{eq:CondChamp}. Of course, these two normalization hypotheses can be removed by standard scaling arguments (in the time and position variables) 
and thus do not restrict the generality of our analysis, but on the other hand noticeably simplify the presentation. 

\smallskip

 A typical example of a force field  is the one associated to a confinement potential 
\beqn\label{eq:def:F=nablaV}
{\bf F}(x) := \nabla V(x), \quad  
V(x)  := {  \langle x \rangle^\gamma \over \gamma} + V_0, \quad V_0 \in \R. 
\eeqn
In this case, we may observe that 
\beqn\label{eq:def:G=e-V}
G (x) := {\rm e}^{- V(x)  }  \in L^1(\R^d) \cap C^2(\R^d), 
\eeqn
is a stationary solution of \eqref{eq:FPeq}, and even an equilibrium state. We may  assume that $G$ is a probability measure, by choosing the constant $V_0$ adequately. We recall that when $\bf F$ is given by \eqref{eq:def:F=nablaV} with $\gamma \ge 1$, the following Poincar\'e inequality  
$$
\exists \, c > 0, \qquad \int_{{\Bbb R}^d}|f(x)|^2 \exp(-V(x))dx \leq c\, \int_{{\Bbb R}^d}|\nabla f(x)|^2\exp(-V(x)), 
$$
holds for any $f$ such that $\int_{{\Bbb R}^d}f(x)\exp(-V(x))dx = 0$. Such a Poincar\'e inequality  does not hold when $\gamma \in (0,1)$, which is the case studied in this paper,  but only a weak version of this inequality  remains true (see \cite{MR1856277}, and below \eqref{eq:wPineq} and section~\ref{subsec:RocknerWang}).
In particular, there is no spectral gap for the associated operator $\LL$, nor is there an exponential trend to the equilibrium for the associated semigroup. Similarly,  the classical {\it logarithmic Sobolev inequality} does not hold but only a {\it modified} version of it, see the discussion in   \cite[Section 2]{MR1751701}.

\smallskip
In the general case of a  force field which is not the gradient of a potential, one may see easily that the above Fokker-Planck equation preserves positivity, that is 
$$
 f(t,.) \ge 0, \quad \forall \, t \ge 0, \quad \mbox{if}\quad f_0 \ge 0, 
$$
and that it conserves mass, that is 
\beqn\label{eq:mass}
\M(f(t,\cdot)) = \M(f_0),  \quad\forall \, t \ge0, \quad\mbox{with}\quad \M(g) := \int_{\R^d} g(x) \, dx.
\eeqn
Moreover, the Fokker-Planck operator $\LL$ generates a (Markov) semigroup in many Lebesgue spaces which has a unique positive normalized steady state (or invariant probability measure). 

\smallskip
Before stating our existence result, let us introduce some notation. 
For any exponent $p \in [1,\infty]$, we define the polynomial and 
exponential weight functions $m : \R^d \to \R_+$,  by 
\beqn\label{eq:def:weightpoly}
m(x) := \langle x \rangle ^k , \qquad\mbox{for some }\, k > \ k^* ,\;\mbox{with }\, k^* :=  \max(d,C_{F})/p', 
\eeqn
where $p' := p/(p-1)$ is the conjugated exponent associated to $p$, 
\beqn\label{eq:def:weightexpo1}
m(x) := \exp(\kappa \, \langle x \rangle^s),\qquad \mbox{for some }\, 0 < s < \gamma\, \mbox{ and }\,\kappa > 0,
\eeqn
or 
\beqn\label{eq:def:weightexpo2}
m(x) := \exp(\kappa \, \langle x \rangle^\gamma), \qquad\mbox{for some }\, \kappa \in (0,1/\gamma), 
\eeqn
as well as the associated Lebesgue spaces
$$
L^p(m) = \{  f \in L^1_{\rm loc}({\Bbb R}^d); \,\, \|  f \|_{L^p(m)}  := \|  f m \|_{L^p} < \infty \}. 
$$
We also use the shorthands $L^p_k = L^p(m)$ when $m(x) = \langle x \rangle ^k$. 
It is noteworthy that, for such a choice of weights $m$, we  have $L^p(m) \subset L^1({\Bbb R}^d)$.

\smallskip
As a first step, we have the following existence and uniqueness result. 

\begin{theo}\label{th:wellposedness} For any exponent $p \in [1,\infty]$, any weight function $m$ satisfying either of definitions \eqref{eq:def:weightpoly},   \eqref{eq:def:weightexpo1}
or  \eqref{eq:def:weightexpo2}, and any initial datum $f_0 \in L^p(m)$, there exists a unique global solution $f$ to  the  Fokker-Planck equation \eqref{eq:FPeq}--\eqref{eq:CondInit}, such that for any $T > 0$,
$$
f\in C([0,T];L^1({\Bbb R}^d)) \cap L^\infty(0,T;L^p(m)).
$$
Moreover the associated flow preserves positivity and  conserves mass. 
Also, the operator $\LL$ generates a strongly continuous semigroup $S_\LL(t)$ in $L^p(m)$ when $p \in [1,\infty)$. 

On the other hand, there exists a unique positive, unit mass, stationary solution $G$ such that  
$$
G \in L^1_1, \quad \M(G) = 1
\qquad\mbox{and}\qquad
\Delta G + {\rm div}(G\, {\bf F}) = 0.
$$
More precisely, there exists $\kappa^*_1 \ge \kappa^*_0 := 1/\gamma$ such that  for any $\kappa_0 \in (0,\kappa^*_0)$, $\kappa_1 \in (\kappa^*_1,\infty)$, there holds
\beqn\label{eq:EstimGLinfty}
C_1 {\rm e}^{-\kappa_1  \langle x \rangle^\gamma} \le G \le C_0 {\rm e}^{-\kappa_0  \langle x \rangle^\gamma} \quad\mbox{on}\quad \R^d,
\eeqn
for some constructive constants $C_0,C_1 > 0$.
\end{theo}

The well-posedness for the evolution equation is not surprising and the proof follows classical dissipativity arguments. 
Due to the lack of compactness of the associated semigroup, the standard Krein-Rutman theory does not apply directly in the case $\gamma \in (0,1)$, and the existence of a stationary solution is not straightforward. We  follow here a similar fixed point theorem strategy as in the recent work \cite{NM*} (see also \cite{GPV,EMRR} and the references therein) where the Fokker-Planck equation with general force field \eqref{eq:FPeq}--\eqref{eq:CondChamp2-b} in the case $\gamma \ge 1$ is considered.  
Our approach provides an alternative deterministic proof of the existence of a stationary solution which may also be established using the probabilistic approach developed by Douc, Fort and Guillin in \cite{MR2499863}. Another  deterministic approach is also presented in \cite{CanizoMischler} (which applies to much more general Markov semigroups). 

\smallskip
 Once the existence of a stationary solution is known, we are interested in the long time behaviour of the solution $f(t,\cdot)$ to  the Fokker-Planck equation \eqref{eq:FPeq}.  For the sake of clarity, we consider separately the following two cases. 
 
\smallskip
\noindent{\bf Case 1. }  Following \cite[Example 1.4(c)]{MR1856277}, we consider the case when
furthermore  the above steady state $G$ fulfills a {\it weak weighted Poincaré-Wirtinger inequality}. More precisely, in this case we assume that there exist some constants $R_0, c_1, c_2 > 0$ such that the function $ V := - \log G \in C^1(\R^d)$ satisfies 
\beqn\label{eq:Vsimxgamma}
\forall \, x \in B^c_{R_0}, \qquad c_1 \, |x|^\gamma \le V(x) \le c_2 \, |x|^\gamma, 
\eeqn
and also that there exists $\mu > 0$ such that for any $f \in \DD(\R^d)$ with $\M(f) = 0$, the following {\it weak weighted Poincaré-Wirtinger inequality}
holds
\beqn\label{eq:wPineq}
\int |\nabla (f/G)|^2 \, G \, dx \ge \mu \, \int f^2 \, \langle x \rangle^{2\gamma-2} \, G^{-1} \, dx.
\eeqn
Inequality \eqref{eq:wPineq} holds when $c_1 = c_2$ in \eqref{eq:Vsimxgamma}, see Lemma~\ref{lem:wwPineq:case1}. Although this inequality probably belongs to folklore, we were not be able to find a precise reference. We however refer to  \cite[Theorem~3.3]{MR1856277} and \cite[Theorem~2.18]{MR2609591} where related inequalities are established. 

The weak Poincar\'e inequality \eqref{eq:wPineq} is a consequence of a {\it ``local Poincar\'e inequality"} (or {\it ``Poincaré-Wirtinger inequality"}) together with the fact that  the following Lyapunov condition (see for instance \cite{MR2381160,MR2386063})
\beqn\label{eq:LyapCond}
\Delta w - \nabla V \cdot \nabla w \le  (- \zeta(x) + M \chi_R)w,  \quad \forall \, x \in \R^d,
\eeqn
holds for some well chosen function $w : \R^d \to [1,\infty)$. Here it is assumed that $M$ and $R$ are two positive constants, $\chi_R(x) := \chi(x/R)$ is a truncation function defined through a certain $\chi \in \DD(\R^d)$ such that ${\bf 1}_{[|x|\le 1]}  \le \chi \le {\bf 1}_{[|x| \le 2]}$ and  $\zeta (x) : = \zeta_0  \, \langle x \rangle^{2(1-\gamma)}$ for a constant $\zeta_{0} >0$. 
It is worth emphasizing that, in this case, the force field ${\bf F}$ can be written as 
\beqn\label{eq:condSym1}
{\bf F} = \nabla V + {\bf F}_0, \qquad {\rm div}( {\rm e}^{-V} \, {\bf F}_0) = 0, 
\eeqn
with no other specific condition on ${\bf F}_0$ except that  ${\bf F}$ still satisfies conditions \eqref{eq:CondChamp}--\eqref{eq:CondChamp2-b}. Under these circumstances, we can give 
a simpler proof than in the general case. 
\smallskip

\noindent {\bf Case 2. }  This corresponds to the general case when ${\bf F}$ satisfies only conditions \eqref{eq:CondChamp}--\eqref{eq:CondChamp2-b}, 
without any further assumption on  the stationary state $G$, which in general cannot be determined explicitly. Using the above notations for $M,R$, $\chi_{R}$ and $\zeta$ introduced in the inequality \eqref{eq:LyapCond}, the assumptions \eqref{eq:CondChamp}--\eqref{eq:CondChamp2-b} made on ${\bf F}$ imply in particular the following inequality 
\beqn\label{eq:CondChamp3}
\LL^* m^p := \Delta m^p - {\bf F} \cdot\nabla m^p  \le (-\zeta(x)+ M \chi_R) m^p , \quad \forall \, x \in \R^d,
\eeqn
which is  another version of the  Lyapunov condition \eqref{eq:LyapCond}.  

\smallskip

The main and fundamental difference between these two cases is that the first one involves assumptions on the equilibrium state $G = {\rm e}^{-V}$  while the second one only involves an assomption on the force field ${\bf F}$.

\medskip

In the sequel, when $a(t) \geq 0$ and $b(t) \geq 0$ are two functions of time $t > 0$, we write $a(t) \lesssim b(t)$ to mean that there exists a positive constant $c_{0}$ independent of $t$ such that one has $a(t) \leq c_0\, b(t)$ for all $t > 0$.

\medskip
Our main result is as follows:  

\begin{theo}\label{th:MAIN} 
Let ${\bf F}$ satisfy \eqref{eq:CondChamp}--\eqref{eq:CondChamp2-b},
and let $p \in [1,\infty]$. For a weight function $m$ satisfying either of definitions \eqref{eq:def:weightpoly},   \eqref{eq:def:weightexpo1} or  \eqref{eq:def:weightexpo2}, we define the subgeometric rate function $\Theta_{m}$ as follows: when $m(x) = \xx^k$, we take $\beta \in (0, (k-k^*)/(2-\gamma))$ arbitrary and we set
\beqn\label{eq:MainEstimPoly}
\Theta_{m}(t) := (1 + t)^{-\beta}.
\eeqn
 When $m(x) = \exp(\kappa \, \langle x \rangle^s)$,  we take $ \sigma  := s/(2-\gamma)$ and we set for a $\lambda > 0$
\beqn\label{eq:MainEstimExpo1}
\Theta_{m}(t) := \exp(- \lambda t^\sigma).
\eeqn
Then for any initial datum $f_0 \in L^p(m)$, the associated solution $f = f(t,x)$  to the Fokker-Planck equation \eqref{eq:FPeq} satisfies
\beqn\label{eq:MainEstim}
\|f(t,.) - \M(f_0)\, G\|_{L^p } \lesssim \Theta_m(t) \, \|f_0 - \M(f_0)\, G \|_{L^p(m)},
\eeqn
where we recall that $\M(f_0)$ denotes the mass of $f_0$ defined in \eqref{eq:mass}.
\end{theo}

\begin{rem}\label{Rk:Main1qua} We believe that Theorem~\ref{th:MAIN} is new in the sense that the decay estimate \eqref{eq:MainEstimPoly}-\eqref{eq:MainEstim} is optimal and constructive and that  the Lebesgue spaces involded appear with the same exponent $p \in [1,\infty]$. This last fact is of main importance when one is interested in the stability of nonlinear evolution PDEs, see for instance \cite{KleberSM}.  
\end{rem}

We discuss below in details  some of the existent literature about the same kind of results, some possible extensions and the general strategy of our approach. 

\begin{rem}\label{Rk:Main1} When the force field ${\bf F}$ and the equilibrium $G$ satisfy similar assumptions as  described in {\bf Case 1},  several previous results are known. 
A less accurate  rate of decay than the one given by \eqref{eq:MainEstimPoly}-\eqref{eq:MainEstim}, actually a decay rate of order $\OO(t^{-(k-2)/(2(2-\gamma))})$ in $L^1$-norm, has been proved  by G. Toscani, C. Villani in \cite[Theorem 3]{MR1751701} under the additional assumptions that the initial datum $f_0$ is nonnegative, has finite energy and finite Boltzmann entropy. 
The proof of this first result relies on a modified and  weak  version of the log-Sobolev inequality. 
(It is known that the standard (and stronger) version of the log-Sobolev inequality  only  holds when $\gamma \ge 2$).
In \cite[Example 1.4 (c) and Section 3 for the proof]{MR1856277}, M.~R\"ockner, F.Y. Wang have established the exponential rate 
\beqn\label{eq:RWestim}
 \|f (t,\cdot) - \M(f_{0})G\|_{L^2} \lesssim  
{\rm e}^{-\lambda t^{\sigma^{\sharp}}} \| f_{0} - \M(f_{0})G \|_{L^\infty(G^{-1})}, 
\eeqn
with  $\sigma^{\sharp} := \gamma/(4-3\gamma)$, some $\lambda > 0$ and any $f_0 \in L^\infty(G^{-1})$.  The proof of \eqref{eq:RWestim} is based on a  weak Poincar\'e inequality and the existence of a family of entropy functionals. 

Estimate \eqref{eq:MainEstimPoly}-\eqref{eq:MainEstim} is more accurate than \eqref{eq:RWestim}  since, when the weight function $m(x) := \exp(\kappa\xx^\gamma)$ we have $\sigma = \gamma/(2-\gamma)$, and the function  $\Theta_{m}(t) :=  \exp(- \lambda t^{\gamma/(2-\gamma)})$ decays faster than $\exp(-\lambda t^{\sigma^{\sharp}})$. Also, more importantly, the same  Lebesgue exponent $p$ on both  sides of the inequality is involved in \eqref{eq:MainEstim} while it is not the case in \eqref{eq:RWestim}. 
Also, as we shall see below, from \eqref{eq:MainEstimPoly}-\eqref{eq:MainEstim} one easily deduces by interpolation that for any $\theta \in (0,1)$ one has 
$$\|f(t,.) - \M(f_0)\, G\|_{L^p(m^\theta) } \lesssim \Theta_m(t) \, \|f_0 - \M(f_0)\, G \|_{L^p(m)}.$$

\end{rem}

\begin{rem}\label{Rk:Main1bis} D. Bakry, P. Cattiaux \& A. Guillin in \cite{MR2381160} have extended the approach of M. R\"ockner, F.Y. Wang to the case of a general force field. 
More precisely,  they establish an inequality, which they call {\it weak Lyapunov-Poincar\'e inequality}, in \cite[Theorem~3.10 and Section~4.2.1]{MR2381160} from which they deduce the same rate of convergence \eqref{eq:RWestim}. 
 That work is based on the Lyapunov condition method,  which we have already discussed in the presentation of {\bf Case 1} and which is closely related to the method we use here. The Lyapunov condition method  has then been extensively studied during the last decade, and we refer for instance to \cite{MR2381160,MR2386063,MR2609591}  and the references therein for more details. 
\end{rem}

\begin{rem}\label{Rk:Main1ter} A completely different approach has been developed by R. Douc, G. Fort \& A. Guillin in \cite{MR2499863} generalizing the classical Harris-Meyn-Tweedie theory to the weak confinement setting and  adapted to both the potential case and the general force case. We also refer to the notes on the convergence of Markov processes by M. Hairer, and more precisely to \cite[Theorem~4.1]{Hairer}, for a more explicite formulation of the subgeometric convergence result and a simplified proof (starting from a slightly different set of hypotheses). 
It provides estimate \eqref{eq:MainEstim} in the case $p:=1$ and $m := G^{-1}$ (with same rate), see \cite[(7.3)]{Hairer}. The drawback is that the approach is definitively probabilistic and the constants are maybe not so explicit. In a forthcoming paper \cite{CanizoMischler}, J.A. Ca\~nizo and S. Mischler provide a deterministic and elementary proof of the above mentioned theorem. 
We also refer to A. Eberle, A.~Guillin \& R. Zimmze \cite{EGZ} for recent related results. 
\end{rem}

\begin{rem} \label{Rk:Main2}
The same kind of decay estimates remains true when, on the left hand side of \eqref{eq:MainEstim}, the  Lebesgue norm $L^p$ is replaced by a weighted Lebesgue norm $L^{p}(m^\theta)$ with 
$0\leq \theta < 1$. More precisely,  when $m(x) = \xx^k$ and $\theta$ is such that $k^*/k < \theta < 1$, we choose $\beta := k(1-\theta)/(2-\gamma)$, and if $0 \leq \theta \leq k^*/k$, we choose $\beta \in (0, (k-k^*)/(2-\gamma))$ arbitrary. In both cases, we define $\Theta_m$ through \eqref{eq:MainEstimPoly}. 
When $m(x) = \exp(\kappa \, \langle x \rangle^s)$ the definition of the decay rate $\Theta_m$ is unchanged. 
\end{rem}

\begin{rem} \label{Rk:Main3}
When an exponential weight function $m(x) := \exp(\kappa \langle x \rangle^\gamma)$ is considered, one could have a field force ${\bf F}$ satisfying the first condition of \eqref{eq:CondChamp} and ${\rm div} ({\bf F}) \le C'_F \, \langle x \rangle^{\gamma' - 2}$, with $\gamma' < 2\gamma$,  or $\gamma' = 2\gamma$ but with $C'_F$ small enough, and obtain similar results. However we do not push our investigations in that direction, since the general ideas of the proof are essentially the same.   
\end{rem}

\begin{rem} \label{Rk:Main4} 
When a polynomial weight function $m(x) = \langle x \rangle^k$ is considered, the decay rate in \eqref{eq:MainEstim} is given by \eqref{eq:MainEstimPoly}, which is  better than the decay rate one might obtain by a mere interpolation argument between $L^2(\exp(\kappa \langle x \rangle^\gamma))$ and $L^1({\Bbb R}^d)$. More precisely, assume that when the weight function $m(x) = \exp(\kappa \langle x \rangle^\gamma)$, the function $\Theta_{m}$ being given by \eqref{eq:MainEstimExpo1} with $s:= \gamma/(2-\gamma)$, we have \eqref{eq:MainEstim}, as well as the estimate $\|f(t)\|_{L^1} \lesssim \|f_{0}\|_{L^1}$ for any $f_0 \in L^1(\exp(\kappa\xx^\gamma))$ such that $\M(f_0) = 0$. Then for any $R > 0$ we have 
$\M(f_{0}1_{B_{R}}) = - \M(f_{0}1_{B_{R}^c})$, and thus we may write $f_{0} = f_{01} + f_{02} + f_{03}$ where
\begin{equation*}
f_{01} := \left(f_{0} - \M(f_{0}1_{B_{R}})\right)1_{B_{R}}, \quad
f_{02} := f_{0}1_{B_{R}^c}, \quad
f_{03} := \M(f_{0}1_{B_{R}^c})1_{B_{R}}.
\end{equation*}
Therefore for  $t > 0$, denoting by $f_{j}(t)$ the solution of \eqref{eq:FPeq} with initial datum $f_{0j}$, one has 
\begin{align*} 
\|f(t)\|_{L^1} & \leq  \|f_{1}(t)\|_{L^1} + \|f_{2}(t)\|_{L^1} + \|f_{3}(t)\|_{L^1} \\
&\lesssim \exp(-\lambda t^{\gamma/(2 -\gamma)})\, \|f_{01}\|_{L^1(\exp(\kappa\xx^\gamma))}+ \|f_{02}\|_{L^1} + \|f_{03}\|_{L^1} \\
& \lesssim \exp(-\lambda t^{\gamma/(2 -\gamma)}) \, 
\| (f_0 - \M(f_0 \, {\bf 1}_{B_R})) {\bf  1}_{B_R} \|_{L^1(\exp(\kappa\xx^\gamma))} 
\\
&\qquad\qquad
+ \|\M(f_0 \, {\bf 1}_{B_R^c}) {\bf 1}_{B_R}\|_{L^1} + \|f_0 \, {\bf 1}_{B_R^c} \|_{L^1} \\
&\lesssim \exp(-\lambda t^{\gamma/(2 -\gamma)})\, {\rm e}^{\kappa \, \langle R \rangle^\gamma} \,  \| f_0 \|_{L^1}
+ (R^{d-k} + R^{-k} ) \| f \|_{L^1_k} \\ 
&\lesssim \left(\exp(-\lambda t^{\gamma/(2 -\gamma)} + \kappa \langle R\rangle^\gamma) + R^{d-k}\right) \| f \|_{L^1_k}.
\end{align*}
Assuming that $t > (2\kappa/\lambda)^{(2-\gamma)/\gamma}$, we may choose $R$ so that $\kappa\langle R \rangle^\gamma =  \lambda t^{\gamma/(2-\gamma)}/2$, we find that when $k > d$, for any $t > (2\kappa/\lambda)^{(2-\gamma)/\gamma}$ we have
$$
\|f(t)\|_{L^1} \lesssim t^{{-(k-d)\over 2 - \gamma}} \, \|f_0 \|_{L^1_k}.
$$
This decay estimate is not as sharp as the one given by Theorem~\ref{th:MAIN} when the weight function is $m := \xx^k$, since according to the definition \eqref{eq:MainEstimPoly} in this case we have actually a decay rate of $\OO(t^{-K})$ for any $K \in (0,k/(2-\gamma))$. \qed
\end{rem}

\begin{rem} \label{Rk:Main5}
In a few previous papers, due in particular to R.E. Caflisch \cite{MR575897,MR576265}, T.M. Liggett \cite{MR1112402}, G. Toscani \& C. Villani~\cite{MR1751701}, Y. Guo~\cite{Guo} or K. Aoki \& F. Golse~\cite{AokiGolse}, a certain number of models, arising from statistical physics, has been considered for which only subgeometric (but not geometric) rate of decay to the equilibrium can be established. 
As it is the present case, one can associate to each of these models a  linear  operator which  does not enjoy any spectral gap in its spectrum set and that is the reason why exponential rate of convergence fails. 


An abstract theory for non-uniformly exponentially stable semigroups (with non exponential decay rate) has also been recently developed  and we refer the interested reader to \cite{BEPS,BattyD} and the references therein.
We  finally refer to K. Carrapatoso \& S. Mischler \cite{KleberSM} where similar semigroup analysis as here is developed and applied in order to establish the well-posedness of the Landau equation in large spaces. 
\end{rem}

Let us briefly explain the main ideas behind our method of proof. 
In {\bf Case 1}, and as a first step, we may use the argument introduced in M. R\"ockner \& F.Y. Wang  \cite{MR1856277} (see also O. Kavian \cite[Lemma 1.3]{OK-Kompaneets}) which we briefly recall now. 
We consider three  Banach spaces $E_2$, $E_1$ and $E_0$, such that $E_2 \subset E_1 \subset E_0 \subset L^1$, and more precisely $E_{1}$ is an interpolation space of order $1-1/\alpha$ between $E_{0}$ and $ E_{2}$ for some $\alpha \in (1,\infty)$, that is 
\begin{equation}\label{eq:Interpol-E}
\|f \|_{E_1} \le C_\alpha \, \| f \|_{E_0}^{1/\alpha} \, \| f \|_{E_2}^{1-1/\alpha}, \quad \forall \, f \in  E_{2},  
\end{equation}
and such that the semigroup $S_{\LL}(t)$ associated to the Fokker-Planck equation can be solved in each of these spaces. 
Moreover, assume that for any $f_0 \in  E_{2} $, the solution $S_\LL(t)f_{0}= f(t)$ to the Fokker-Planck equation \eqref{eq:FPeq}--\eqref{eq:CondInit} satisfies the following two differential inequalities
\begin{equation}\label{eq:Diff-Ineq}
\frac d {dt} \| f(t) \|_{E_1} \le - \lambda \, \|  f(t)  \|_{E_0}, \qquad \frac d {dt} \|  f(t)  \|_{E_2} \le 0,
\end{equation}
for some constant $\lambda > 0$. Using the fact that $\|f(t)\|_{E_{2}} \leq \|f_{0}\|_{E_{2}}$, as a consequence of the  above second differential inequality, together with \eqref{eq:Interpol-E}, we obtain the closed differential inequality
$$
\frac d {dt} \|f(t)\|_{E_1} \le - \lambda \, C_\alpha^{-\alpha} \, \| f_0 \|_{E_2}^{-(\alpha -1)}\|f(t)\|_{E_1}^{\alpha}.
$$
We may readily integrate this inequality and we obtain the estimate 
\beqn\label{eq:intro:estimfLE1E2}
\|f(t) \|_{E_1}  \lesssim t^{-1/(\alpha - 1)} \| f_0 \|_{E_2}. 
\eeqn
Now, choosing $E_1 = L^2(G^{-1/2})$, $E_0 := L^2(G^{-1/2} \langle x \rangle^{\gamma-1})$ and $E_2 = L^\infty(G^{-1})$, one may see that the first differential inequality in \eqref{eq:Diff-Ineq} is an immediate consequence of the weak Poincar\'e inequality \eqref{eq:wPineq}.
The second differential inequality is a kind of generalized relative entropy principle (see \cite{MR1856277,MR2162224}). 
The above estimate \eqref{eq:intro:estimfLE1E2} is a somewhat rough variant of estimate \eqref{eq:MainEstimExpo1}. It is noteworthy that for $\alpha \in (1,2)$, we get that the associated semigroup $S_\LL$ defined by $S_\LL(t) f_0 = f(t)$ satisfies in particular $\| S_\LL  \|_{E_2\to E_1} \in L^1(0,\infty)$.   

\smallskip 
We then generalize the decay estimate to a wider class of Banach spaces by adapting the  extension theory introduced  and developed in 
C. Mouho \cite{Mcmp},  M.P.~Gualdani, S. Mischler \& C. Mouhot \cite{GMM} and S. Mischler \& C. Mouhot \cite{MM*}, and used in M.~Ndao \cite{NM*} for the case $1 \leq \gamma \leq 2$.
In order to do so, we consider the two Banach spaces  $\EE_2 := L^p(m)$ and $\EE_1 := L^p$, and we introduce a splitting $\LL = \AA + \BB$, where $\AA$ is an appropriately defined bounded operator so that $\BB$ becomes a dissipative operator. Then, we show that for $i=1$ and $i=2$
$$\|S_\BB\AA\|_{\EE_i \to \EE_i} \in L^1(\R_+), \qquad
\|S_\BB  \|_{\EE_2 \to \EE_1} \in L^1(\R_+), \qquad
\| \AA S_\BB  \|_{\EE_1 \to \EE_1}  \in L^1(\R_+).$$
If ${\mathcal T}_{i}$, with $i=1,2$, are two given operator valued, measurable functions, defined on $(0,\infty)$, we denote by
$$\left({\mathcal T}_{1}*{\mathcal T}_{2}\right)(t) := 
\int_{0}^t {\mathcal T}_{1}(\tau){\mathcal T}_{2}(t-\tau)\,d\tau
$$
their convolution on ${\Bbb R}_{+}$. We then set $\TT^{(*0)} := I$,  ${\mathcal T}^{(*1)} := {\mathcal T}$ and,  for any $k \geq 2$,  ${\mathcal T}^{(*k)} := {\mathcal T}^{*(k-1)}*{\mathcal T}$.
We may show that for $n \in \N$ sufficiently large (actually $n \ge 1 + (d/2)$ is enough), we have 
\beqn\label{eq:intro:SBAnL1}
\| (S_\BB \AA)^{(*n)} \|_{E_1 \to \EE_1} \in L^1(\R_+),  \quad \| ( \AA S_\BB)^{(*n)}  \|_{\EE_1 \to E_1}  \in L^1(\R_+).
\eeqn
Then, from the usual Duhamel formula, the solution of  \eqref{eq:FPeq} can be written as 
$$
f(t) = S_{\BB}(t)f_{0} + \int_0^t S_{\BB}(t-\tau)\AA S_{\LL}(\tau)f_{0}\,d\tau.
$$
 Thus, using the above notations for the convolution of operator valued functions, we have $S_{\LL} = S_{\BB} + S_{\BB}*(\AA S_{\LL})$, and interchanging the role played by $\LL$ and $\BB$ in this expression, we get the following operator versions of Duhamel formulas
\begin{align}
S_{\LL} &= S_{\BB} + S_{\BB}*(\AA S_{\LL}) = S_{\BB} + (S_{\BB}\AA)*S_{\LL} \label{eq:Duhamel-1} \\
 & = S_{\BB} + S_{\LL}*(\AA S_{\BB}) = S_{\BB} + (S_{\LL}\AA)*S_{\BB}. \label{eq:Duhamel-2}
\end{align}
Upon replacing recursively $S_{\LL}$ in either of the expressions on the right hand side by either of the Duhamel's formula, we get, for instance:
\begin{align*}
S_\LL &= S_{\BB} + S_{\BB}*\AA 
\left\{ S_{\BB} + (S_{\BB}\AA)*S_{\LL}\right\} \\
& = 
S_{\BB} + (S_{\BB} \AA ) *S_{\BB} + (S_{\BB}\AA)^{(*2)}*S_{\LL}.
\end{align*}
By induction on the integers $n_{1} \ge 0$, $n_{2} \ge 0$ and $n_1 + n_2 \ge 1$, 
we thus obtain
\begin{equation} \label{eq:Duhamel-gene}
S_\LL = \sum_{k=0}^{n_{1}+n_{2}-1} S_\BB * (\AA S_\BB)^{(*k)}  + ( S_\BB \AA)^{(*n_{1})} *   S_\LL  *  (\AA S_\BB)^{(*n_{2})}. 
\end{equation}

Using the above formulas \eqref{eq:Duhamel-gene} and estimates \eqref{eq:intro:SBAnL1}, 
as well as the decay estimate \eqref{eq:intro:estimfLE1E2} for initial data in the space $E_2$, 
we conclude that $\| S_\LL  \|_{\EE_2\to \EE_1} \in L^1(\R_+)$ which is nothing but a rough version of the estimates presented in Theorem~\ref{th:MAIN}.
While the method leading to  \eqref{eq:intro:estimfLE1E2} in $E_i$ can be performed only in very specific (Hilbert) spaces, the above extension method is very general and 
can be used in a large class of Banach spaces $\EE_i$ (once we already know the decay in one pair of spaces $(E_1,E_2)$). 

\smallskip 
On the other hand, in {\bf Case 2}, we start proving an equivalent to estimate \eqref{eq:intro:estimfLE1E2} in one appropriate pair of (small) spaces. We then argue similarly as in the previous case. 
We present three different strategies in order to establish an equivalent to estimate \eqref{eq:intro:estimfLE1E2}. On the one hand, we may adapt the strategy of M. R\"ockner and F.Y. Wang introduced to deal with  {\bf Case 1} by using a generalization of the weak Poincar\'e inequality \eqref{eq:wPineq} which 
also holds in the case of a general force field and which has been established by D. Bakry, P. Cattiaux \& A. Guillin in \cite{MR2381160}. 

A second approach consists in using the generalization of the well-known Doeblin-Harris-Meyn-Tweedie theory by R. Douc, G. Fort \& A. Guillin in \cite{MR2499863} to the so-called {\it subgeometric} framework, namely to the case when exponential trend to the equilibrium is not available.

A third and more original way consists in adapting the Krein-Rutman theory to the present context. 
On the one hand, it is a simple version of the Krein-Rutman theory because the equation is mass conserving, a property which implies that the largest eigenvalue of $\LL$ is $\lambda_{1} = 0$.
On the other hand, it is not a classical version because the operator $\LL$ does not have a compact resolvent (however it has power-compact resolvent in the sense of J. Voigt \cite{Voigt80}) and, more importantly, $0$ is not  necessarily an isolated point in the spectrum. First adapting (from \cite{EMRR,MS*} for instance) some more or less standard arguments,
 we prove that  there exists $G$, a unique  stationary solution of \eqref{eq:FPeq} which is positive, has unit mass and is such that $G \in L^\infty(\exp(\kappa \xx^\gamma))$, for all $\kappa \in (0,1/\gamma)$. Next, we prove an estimate similar to \eqref{eq:intro:estimfLE1E2} by establishing a set of accurate estimates on the resolvent operators $\RR_\BB(z)$, $R_\LL(z)$ and by using the iterated Duhamel formula 
$$
S_\LL = \sum_{k=0}^{5} S_\BB * (\AA S_\BB)^{(*k)}  +  S_\LL  *  (\AA S_\BB)^{(*6)},
$$
together with the inverse Laplace formula
$$
S_\LL*(\AA S_\BB)^{(*6)} (t) = {{\rm i} \over 2\pi} \, {1 \over t^n} \int_{-{\rm i}\infty}^{+{\rm i}\infty} {\rm e}^{zt} {d^n \over dz^n} \bigl[ R_\LL(z) (\AA R_\BB(z))^6 \bigr] \, dz,
$$
which holds true for any time $t>0$ and any integer $n$.  
  
\medskip
To finish this introduction, let us describe the plan of the paper. 
In Section~\ref{sec:bornesB}, we introduce an appropriate splitting $\LL = \AA + \BB$ and present the main estimates on the semigroup $S_\BB$. 
In Section~\ref{sec:bornesL}, we deduce that the semigroup $S_\LL$ is bounded in the spaces $L^p(m)$. 
In Section~\ref{sec:Poincare}, the proof of Theorem~\ref{th:MAIN}  is carried out in the case when a weak Poincar\'e inequality \eqref{eq:wPineq} is satisfied ({\bf Case 1}).
In Section~\ref{sec:NonPoincareStatProblem}, the second part of the proof of Theorem~\ref{th:wellposedness} on the stationary problem is presented  in the general case ({\bf Case 2}).  
Finally, Section~\ref{sec:NonPoincareSemiGroup} is devoted to the proof of Theorem~\ref{th:MAIN} in the general case.  

\medskip\noindent
{\bf Acknowledgements.}   
The second author's work is supported by the
french ``ANR blanche'' project Stab: ANR-12-BS01-0019. We thank M.~Hairer and A.~Guillin for enlightening discussions on several results in relation with the present work.

\bigskip

\section{The splitting $\LL = \AA + \BB$ and growth estimates on $S_\BB$ }
\label{sec:bornesB}
\setcounter{equation}{0}
\setcounter{theo}{0}

 We introduce the splitting of the operator $\LL$ defined by  
\begin{equation}\label{eq:FPdefB}
\AA f := M \chi_R f, \qquad \BB f := \LL f - M \chi_R f
\end{equation}
where $M$ is positive constant, and for a fixed truncation function $\chi \in {\mathcal D}({\Bbb R}^d)$ such that $1_{B(0,1)} \leq \chi \leq 1_{B(0,2)}$, and for $R > 1$ which will be chosen appropriately as well as $M$, we set $\chi_R(x) := \chi(x/R)$.  We establish several growth and regularity estimates on the semigroup $S_\BB$ and the family of operators $\AA S_\BB$ which will be of fundamental importance in the sequel. 

\subsection{Basic growth estimates}

\begin{lem}\label{lem:SBLpmLp} For any exponent  $p \in [1,\infty]$ and any  polynomial or  exponential weight function $m$ given by \eqref{eq:def:weightpoly}, \eqref{eq:def:weightexpo1} or \eqref{eq:def:weightexpo2},  we can choose $R,M$ large enough in the definition \eqref{eq:FPdefB} of $\BB$
such that the operator $\BB$ is dissipative in $L^p(m)$, namely
\beqn\label{eq:SBLpcontract}
\|  S_\BB(t) \|_{L^p(m) \to L^p(m)} \leq 1, \qquad \forall \, t \ge 0.
\eeqn
Moreover, if $m(x) = \xx^k$, set $\beta := k(1-\theta)/(2-\gamma)$ for $k^*/k < \theta < 1$, and $\beta \in (0, (k-k^*)/(2-\gamma))$ arbitrary when $\theta \leq k^*/k$. Then the function $\Theta_{m}$ being defined by \eqref{eq:MainEstimPoly},
we have
\beqn\label{eq:SBLpmLp}
\|S_\BB(t) \|_{L^p(m) \to L^p(m^\theta)} \lesssim  \Theta_{m}(t).
\eeqn
If $m(x) = \exp(\kappa\xx^s)$ satisfies \eqref{eq:def:weightexpo1} or \eqref{eq:def:weightexpo2}, 
 the above inequality holds, provided the function $\Theta_{m}$ is defined by
$$\Theta_{m}(t) := \exp(-\lambda t^{s/(2-\gamma)}),$$
where $\lambda > 0$ can be chosen arbitrarily when $s < \gamma$, and $\lambda < \lambda_{*}$, with
$$\lambda_{*} := (\kappa(1-\theta))^{(2 - 2\gamma)/(2 - \gamma)}(\kappa\gamma(1 - \kappa\gamma))^{\gamma/(2-\gamma)},$$
when $s = \gamma$.
\end{lem}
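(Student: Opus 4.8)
The plan is to study the evolution of weighted $L^p$ norms of $S_\BB(t)f_0$ by testing the equation $\partial_t f = \BB f = \Delta f + \mathrm{div}(f\,{\bf F}) - M\chi_R f$ against $|f|^{p-1}\mathrm{sgn}(f)\, m^p$ (with the obvious modifications for $p=1$ and $p=\infty$, the latter by a duality/maximum-principle argument). Integration by parts produces the dissipative diffusion term plus a zeroth-order multiplier which, using the identity $\Delta(fm) = m\Delta f + 2\nabla f\cdot\nabla m + f\Delta m$ and the Condition~\eqref{eq:CondChamp}--\eqref{eq:CondChamp2-b} on ${\bf F}$, is dominated by $\psi_m(x) := \tfrac1p\Delta m^p/m^p - \tfrac1{p'}|\nabla m^p/m^p|^2 - {\bf F}\cdot\nabla m^p/(p\,m^p) + \tfrac1{p'}\mathrm{div}\,{\bf F} - M\chi_R$ (this is essentially the quantity controlled by \eqref{eq:CondChamp3} after the change of unknown $g = fm$). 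A direct computation of $\psi_m$ for each of the three families of weights gives the key pointwise behavior: for $m = \xx^k$ one gets $\psi_m(x) \simeq -\,c\,\xx^{\gamma-2}$ at infinity; for $m = \exp(\kappa\xx^s)$ with $s<\gamma$ one gets $\psi_m(x) \simeq -\kappa s\,\xx^{s+\gamma-2}$; and for $m = \exp(\kappa\xx^\gamma)$ one gets $\psi_m(x) \simeq -\,\kappa\gamma(1-\kappa\gamma)\,\xx^{2\gamma-2}$, which is why the restriction $\kappa\gamma<1$ and the constant $\lambda_*$ appear. For $R,M$ large enough $\psi_m \le 0$ everywhere, giving \eqref{eq:SBLpcontract}.

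For the decay estimate \eqref{eq:SBLpmLp} the idea is a Nash–Gagliardo–Nirenberg / nonlinear-differential-inequality argument in the spirit of \eqref{eq:Interpol-E}--\eqref{eq:intro:estimfLE1E2}. First I would record the differential inequality
\[
\frac{d}{dt}\|f(t)m\|_{L^p}^p \le -\,p\,c\int |f m|^p\,\varphi(x)\,dx,
\]
where $\varphi(x) \gtrsim \xx^{\sigma-2}$ with $\sigma := \gamma$ in the polynomial case, $\sigma := s+\gamma$ (effectively, after absorbing the weight ratio) in the stretched-exponential case; more precisely one writes $m = m^\theta\cdot m^{1-\theta}$ and uses that $\psi_m + $ (weight loss from $m^\theta$) still leaves a negative term proportional to $m^{p\theta}$ times a negative power of $\xx$ on the region $|x|$ large, which is the source of the $\theta$-dependence and of the condition $\theta > k^*/k$ (resp.\ $\sigma\le s/(2-\gamma)$ after rescaling). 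Then I would interpolate: split the integral into $|x|\le\rho$ and $|x|>\rho$, bound the far part by $\rho^{\sigma-2}\|fm^\theta\|_{L^p}^p$ and the near part by $\rho^{(\text{exponent})}\|fm\|_{L^p}^p$ (using that on a ball $m^{1-\theta}$ is comparable to a power of $\rho$ in the exponential cases, or literally a power in the polynomial case), optimize over $\rho$, and use \eqref{eq:SBLpcontract} applied to the weight $m^\theta$ to control the $\|fm^\theta\|$ factor by its initial value. This yields a closed inequality of the form $y' \le -C\,y^{1+a}$ for $y(t) = \|f(t)m^\theta\|_{L^p}^p$ when $m$ is polynomial, integrating to the rate $(1+t)^{-\beta}$ with $\beta = k(1-\theta)/(2-\gamma)$; in the stretched-exponential case the same scheme with the extra large-$x$ decay of $\varphi$ gives $y' \le -C\,t^{\text{something}}y$ type control, or more cleanly a self-improving bound iterated on dyadic time intervals, producing $\exp(-\lambda t^{s/(2-\gamma)})$.

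The main obstacle I expect is twofold: (i) making the interpolation/optimization completely explicit in the borderline weight $m = \exp(\kappa\xx^\gamma)$ so that one recovers the sharp constant $\lambda_* = (\kappa(1-\theta))^{(2-2\gamma)/(2-\gamma)}(\kappa\gamma(1-\kappa\gamma))^{\gamma/(2-\gamma)}$ --- this requires carefully tracking how the coefficient $\kappa\gamma(1-\kappa\gamma)$ from $\psi_m$ and the factor $\kappa(1-\theta)$ from the weight ratio $m^{1-\theta}$ combine under the scaling $\rho \sim t^{1/(2-\gamma)}$; and (ii) handling the endpoint cases $p=1$ and $p=\infty$, where the formal computation above is not directly an $L^p$ energy estimate: for $p=\infty$ I would argue by duality with the $p=1$ case for the adjoint semigroup $S_{\BB^*}$, or by a direct comparison-principle / weighted maximum-principle argument using that $m^{-1}\BB^* m \le \psi_m$; for $p=1$ the natural test function $\mathrm{sgn}(f)\,m$ is not smooth, so I would regularize $\mathrm{sgn}$ and pass to the limit, or again invoke duality. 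The intermediate integrability $L^p(m)\subset L^1$ noted in the text, together with \eqref{eq:CondChamp3} and standard parabolic regularity to justify all integrations by parts for the approximate problems, should make the limiting arguments routine.
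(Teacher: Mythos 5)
Your plan matches the paper's proof: the same weighted $L^p$ energy identity, the same asymptotics on the multiplier (including the borderline coefficient $-\kappa\gamma(1-\kappa\gamma)$ when $s=\gamma$, which is what produces $\lambda_*$), a closed nonlinear ODE by H\"older interpolation against the contractive $L^p(m)$ norm in the polynomial case, and the split-at-$\rho$ / optimize-in-$\rho$ scheme with $\rho\sim t^{1/(2-\gamma)}$ in the stretched-exponential case, with $p=\infty$ obtained by passing to the limit (or by duality, as you suggest). One small algebraic slip: the explicit formula you wrote for $\psi_m$ is not consistent with your own (correct) stated asymptotics --- after rewriting in terms of $\nabla m/m$ and $\Delta m/m$ the coefficients should be $\tfrac{2}{p'}$ and $\tfrac{2-p}{p}$ respectively (as in the paper's $\psi^0_{m,p}$), so that the leading $(\kappa\gamma)^2\xx^{2\gamma-2}$ contribution is $p$-independent, whereas your version produces a spurious $p$-dependence; this does not affect the structure of the argument.
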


\begin{proof}[Proof of Lemma~\ref{lem:SBLpmLp}.] The proof is similar to the proof of \cite[Lemma 3.8]{GMM}
and \cite[Lemma 3.8]{MM*}.

\smallskip\noindent
{\bf Step 1. } We first  fix $p \in [1,\infty)$, assuming $m$ is as in the statement of the Lemma. 
We start recalling an identity satisfied by the operator $\BB$ (see the proof of \cite[Lemma 3.8]{MM*}). 
For any smooth, rapidly decaying  and positive function $f$, we have  
\bean
\label{eq:FPhomo-BLp}
&& \int_{\R^d} (\BB \, f) \, f^{p-1}  \,  m^p  d x   =
 \\ 
 &&\qquad= 
 - (p-1) \int_{\R^d}  |\nabla (mf)|^2 \, (mf)^{p-2} d x    +
 \int_{\R^d}  f^p \, m^p \, \psi^0_{m,p}(x) d x  ,
\eean
with
\begin{equation}\label{eq:Def-psi-mp}
\psi^0_{m,p}(x) := {(2 - p) \over p}{\Delta m \over m}  +
{2 \over p'}  {|\nabla m|^2 \over m^2} + {1 \over p'} \, {\rm div}({\bf F}) - {\bf F} \cdot {\nabla m \over m} - M \,
\chi_R.
\end{equation}
Observe that 
\bean
&&{\nabla m  \over m} = k \kappa  x  \langle x \rangle^{s-2} 
\\
&&{\Delta m \over m}  =   k \kappa d  \langle x \rangle^{s-2} +  s (s-2) \kappa |x|^2 \langle x \rangle^{s-4} 
+  \nu |x|^2 \langle x \rangle^{2s-4} , 
\eean
where we have set 
\begin{align*}
s &:= 0, \qquad \kappa := 1, \qquad \nu := k(k-2), \qquad \mbox{when }\, m(x) = \langle x \rangle^k, \\
k &:= s,\qquad \nu := (s\kappa)^2, \qquad\qquad\qquad\qquad \mbox{when }\,
m(x) = \exp(\kappa \, \langle x \rangle^s).
\end{align*}
In this latter case, for $s \in (0,\gamma]$, the third term in the definition of $\psi^0_{m,p}$ is negligible with respect to the first and second terms,  and thus 
\bean 
\psi^0_{m,p}(x)\, | x |^{2 - \gamma-s}  &&\!\!\!\!\mathop{\longrightarrow}_{|x| \to \infty} - a^* := (\kappa\gamma)^2 - \kappa\gamma < 0 \quad \mbox{if }\, s=\gamma, 
\\
\psi^0_{m,p}(x)\, | x |^{2 - \gamma-s}  &&\!\!\!\!\mathop{\longrightarrow}_{|x| \to \infty} - a^* :=  -\infty \quad \mbox{if }\, 0< s < \gamma.
\eean

When $m = \langle x \rangle^k$, and $k > k^*(p) := C_{F}/p'$,  the first and second terms are  negligible with respect to the third term, 
and then 
\bean 
\limsup_{|x|  \to \infty}  \psi^0_{m,p}(x) \, | x |^{2 - \gamma} \le - a^* :=   (1 - {1 \over p}) \, C_F - k < 0. 
\eean

We deduce that for any $a \in (0,a^*(m,p))$, we can choose  $R > 1$ and $M$ large enough in such a way that 
$\psi^0_{m,p}(x)  \le - a \langle x\rangle^{\gamma+s-2}$ for all $x \in \R^d$, and then 
\begin{equation}\label{eq:BdissipLp}
\int (\BB f) \, f^{p-1} \, m^p \le  -a \int |f|^p \, m^p \, \langle x \rangle^{\gamma+s-2} -  (p-1) \int |\nabla (fm)|^2 (fm)^{p-1} .
\end{equation}
In particular, using only the fact that the RHS term is negative, we conclude that the operator $\BB$ is dissipative and we classically deduce that the semigroup $S_\BB$ is well-defined on $L^p(m)$ for $p \in [1,\infty)$ and that it is a strongly continuous contraction semigroup, in other words, \eqref{eq:SBLpcontract} holds for any $p \in [1,\infty)$. 
Since we may choose $R,M$ such that the above inequality holds true for any $p \in [1,\infty)$ when $a \in (0,a^*(m,\infty))$, we may pass to the limit as $p\to\infty$ in \eqref{eq:SBLpcontract} and we conclude that $S_\BB$ is a contraction semigroup in $L^p(m)$,  for  any $p \in [1,\infty]$.

\smallskip\noindent
\noindent{\bf Step 2. } Take $p \in [1,\infty)$ and $k > k^*(p) = C_{F}/p'$, and finally, assuming first that $\theta > k^*/k$, set $\ell := \theta k \in (k^*,k)$. If $f_0 \in L^p(m)$ with $m := \langle x \rangle^k$,  denote $f(t) := S_\BB(t) f_0$. 
Dropping the  last term in \eqref{eq:BdissipLp},  we have for $a \in (0, a^*(m,p))$
$$
{d \over dt}  \int|f|^p \,  \,\langle x \rangle^{p\ell} \le - ap   \int|f|^p \,\langle x \rangle^{p\ell+\gamma-2}.  
$$
Using H\"older's inequality 
$$
\int f^p \, \langle x \rangle^{p\ell} 
\le \Bigl( \int f^p \, \langle x \rangle^{p\ell+\gamma-2} \Bigr)^\eta 
 \Bigl( \int f^p \, \langle x \rangle^{pk} \Bigr)^{1-\eta}
$$
with $\eta := (k-\ell)/[k-\ell + (2-\gamma)/p] \in (0,1)$, and the fact that the semigroup $S_{\BB}$ is a contraction semigroup in $L^p_{k}$ by \eqref{eq:SBLpcontract}, upon denoting $\alpha := \eta/(1-\eta) = p \, (k-\ell)/(2-\gamma) $, we get 
$$
{d \over dt}  Y_{\theta} (t) \le - a\, p  \, Y_{\theta}(t)^{(\alpha + 1)/\alpha} \, Y_{1}(0)^{-1/\alpha}, \qquad
\mbox{where }\, Y_{\tau}(t) := \int f^p \xx^{p\tau}.
$$
Integrating the above differential inequality yields 
$$
Y_{\theta} (t) \le \bigl( {\alpha \over a p t} \bigr)^\alpha \, Y_{1}(0),
$$
which in turn implies \eqref{eq:SBLpmLp} with $\Theta_{m} (t)$ replaced with
$
\left( { (k-\ell)/(2-\gamma) \over at} \right)^{{  k-\ell \over 2-\gamma  }}.
$
Since for $0\leq t \leq 1$ we have clearly $Y_{\theta}(t) \lesssim Y_{1}(0)$ the proof of \eqref{eq:SBLpmLp} is complete when $p < \infty$ and $m(x) = \xx^k$ and $\ell := k\theta > k^*$. 

In the case where $\ell = k\theta \leq k^*$, it is enough to pick $\theta_{0} > \theta $ so that $k\theta_{0} > k^*$ and observe that we have $Y_{\theta} \leq Y_{\theta_{0}}$: in this way one is convinced that \eqref{eq:SBLpmLp} holds for all $p<\infty$ and $0 \leq \theta < 1$.  

We deduce the same estimate for $p=\infty$ by letting $p\to\infty$ in \eqref{eq:SBLpmLp}.

\smallskip
\noindent{\bf Step 3.} Similarly, when the weight function $m$ is an exponential as defined in \eqref{eq:def:weightexpo1} or \eqref{eq:def:weightexpo2},
take $p \in [1,\infty)$. Given an initial datum $f_0 \in L^p(m)$, denote $f(t) := S_\BB(t) f_0$, and set
$Y_{\theta}(t) := \|  f(t) \|_{L^p(m^{\theta})}^p$. Thanks to the above Step 1 we have for all $t \geq 0$ and  $0 < \theta \leq 1$
$$
Y_{\theta}(t) \le Y_{\theta}(0).
$$
For $\rho >0$ denote by $B_{\rho}$ the ball of ${\Bbb R}^d$ centered at the origin with radius $\rho$. Using the estimate \eqref{eq:BdissipLp} with the weight function $m^{\theta}$, neglecting the last term of that inequality,  we have successively 
\begin{align*}
{d \over dt}  Y_{\theta} (t)
&= p \int (\BB f) \, f^{p-1}  \, m^{p\theta}
\\
&\le -a\,p \int_{B_\rho}  |f|^p   \, m^{p\theta} \, \langle x \rangle^{\gamma+s-2}  
\\
&\le -a\,p\, \langle\rho\rangle^{\gamma+s-2} \int_{B_\rho}  |f|^p \, m^{p\theta}
\\
&\le -a\,p\, \langle\rho\rangle^{\gamma+s-2}    Y_{\theta}  + 
a\,p\, \langle\rho\rangle^{\gamma+s-2}    \int_{B^c_\rho}  |f|^p   \, m^{p\theta}   
\\
&\le -a\,p\, \langle\rho\rangle^{\gamma+s-2}    Y_{\theta}  + 
a\,p \,\langle\rho\rangle^{\gamma+s-2} m(\rho)^{-p(1-\theta)} \int_{B^c_{\rho}} |f_0|^p   \, m^p   \\
&\le -a\,p\, \langle\rho\rangle^{\gamma+s-2} Y_{\theta}  + 
a\,p \,\langle\rho\rangle^{\gamma+s-2}\, m(\rho)^{-p(1-\theta)}\int   |f_0|^p   \, m^p .
\end{align*}
Integrating this differential inequality we deduce 
\begin{align*}
Y_{\theta}(t) & \le \exp(-ap\,t\,\langle\rho\rangle^{\gamma+s-2})\,  Y_{\theta}(0) +    m(\rho)^{-p(1-\theta)}\, Y_{\theta}(0).
\\
&\le 
 \left(\exp(-ap\,t\,\langle\rho\rangle^{\gamma+s-2})  + 
 \exp(-p(1-\theta)\,\rho^s)\right)\, Y_{\theta}(0).
\end{align*}
We may choose $\rho$ such that $a\,\langle\rho\rangle^{\gamma+s-2}t =   (1 - \theta)\,\rho^s$, that is we may take $\rho$ of order $t^{1/(2-\gamma)}$, which allows us to conclude that \eqref{eq:SBLpmLp} also holds in the  exponential case.
As indicated above, the estimate \eqref{eq:SBLpmLp} for $p=\infty$ is obtained by letting $p\to\infty$. 
 \end{proof}
\medskip

The following two lemmas state that when the weight function is  exponential, that is $m(x):=\exp(\kappa\xx^\gamma)$, the semigroup $S_{\BB}$ is ultracontractive in the spaces $L^p(m)$, that is it maps $L^1(m)$ into $L^\infty(m)$ for $t > 0$. As it is pointed out in \cite{OK-KR} (see Remark 2.2 of this reference for a proof based on Probability arguments, and Remark 5.2 for a simple proof based on comparison theorems for parabolic equations), when one considers an operator of the type $Lf := \Delta f + \nabla V \cdot \nabla f$ with $V$ satisfying, for some constants $R > 0$ and $c_{0} \geq 0$, 
$$ \forall x \in B_{R}^c, \qquad\qquad
{\Delta V^{1/2} \over V^{1/2}} + c_{0} \geq 0,
$$
and if there exists a positive constant $c_{1} > 0$ such that $V(x) \geq c_{1}$ for all $x\in B_{R}^c$, then the semigroups $S_{L}(t)$ and $S_{L^*}(t)$ are ultracontractive in the spaces $L^p(\exp(V))$ (the above condition on $\Delta V^{1/2}/V^{1/2}$ is a sort of convexity condition at infinity). Here the operator $\BB$ is not exactly of the same type as $L$, but nevertheless the ultracontractivity of the semigroup $S_{\BB}(t)$, as well as that of $(S_{\BB})^*$, holds. (Recall also that the ultracontractivity of the semigroup $S_{\BB}$ is equivalent to an appropriate form of Nash inequality for the operator $\BB$).

\begin{lem}\label{lem:Ultracon-1}
Consider the weight function  $m_0 := \exp(\kappa \langle x \rangle^\gamma)$, for $0 < \kappa \gamma < 1$. Then there exists $R_{0}, M_{0} > 0$ such that for $M \geq M_{0}$ and $R \geq R_{0}$ we have
\beqn\label{eq:SBL1L2}
\forall \, t   > 0, \qquad \qquad \|  S_\BB(t) \|_{L^1(m_0) \to L^2(m_0)} \lesssim  t^{-d/4} .
\eeqn
\end{lem}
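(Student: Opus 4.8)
The plan is to establish \eqref{eq:SBL1L2} as an ultracontractivity estimate for the semigroup $S_\BB$ via a Nash-type inequality, exploiting the strong confinement produced by the weight $m_0 = \exp(\kappa\xx^\gamma)$. The natural framework is the following: the operator $\BB$ acting on $L^2(m_0)$ is conjugate, through the isometry $f \mapsto g := m_0 f$, to the operator $\tilde\BB g := m_0 \BB(g/m_0)$ acting on the \emph{flat} space $L^2(\R^d)$. From the identity \eqref{eq:FPhomo-BLp}--\eqref{eq:Def-psi-mp} with $p=2$ one reads off that, for $g$ in a suitable dense class,
\[
\int_{\R^d} (\tilde\BB g)\, g \, dx = - \int_{\R^d} |\nabla g|^2 \, dx + \int_{\R^d} \psi^0_{m_0,2}(x)\, g(x)^2\, dx,
\]
and by Step~1 of the proof of Lemma~\ref{lem:SBLpmLp} (applied with $p=2$, $s=\gamma$, $\theta=1$), for $R,M$ large enough one has $\psi^0_{m_0,2}(x) \le - a\,\xx^{2\gamma - 2}$ on all of $\R^d$ for some $a>0$. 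Hence $\tilde\BB$ is self-adjoint (up to the lower-order first-order terms coming from the non-gradient part of ${\bf F}$, which I will handle below) and satisfies the coercivity bound $-\int (\tilde\BB g) g \ge \int |\nabla g|^2 + a\int \xx^{2\gamma-2} g^2$.

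The key step is then to derive from this a Nash inequality of the form
\[
\|g\|_{L^2}^{2 + 4/d} \;\lesssim\; \Bigl( \|\nabla g\|_{L^2}^2 + a \int \xx^{2\gamma-2} g^2 \Bigr)\, \|g\|_{L^1}^{4/d},
\qquad\forall\, g \in L^1 \cap L^2,
\]
which is in fact just the classical Nash inequality $\|g\|_{L^2}^{2+4/d} \lesssim \|\nabla g\|_{L^2}^2 \|g\|_{L^1}^{4/d}$, since the extra confinement term $a\int\xx^{2\gamma-2}g^2 \ge 0$ only helps. From this Nash inequality the Nash--Moser iteration (or equivalently Carlen--Kusuoka--Stroock's argument) gives the ultracontractivity bound $\|S_{\tilde\BB}(t)\|_{L^1 \to L^2} \lesssim t^{-d/4}$ for the semigroup generated by $\tilde\BB$, which translates back, via the isometries $L^p(\R^d) \to L^p(m_0)$, into exactly \eqref{eq:SBL1L2}. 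For this last implication one uses that $S_\BB(t)$ is a contraction on $L^1(m_0)$ and on $L^2(m_0)$ (Lemma~\ref{lem:SBLpmLp}), so the semigroup property $S_\BB(t) = S_\BB(t/2) S_\BB(t/2)$ combined with $\|S_\BB(t/2)\|_{L^1(m_0)\to L^2(m_0)} \lesssim t^{-d/4}$ propagates the bound from small $t$ to all $t>0$ if needed; alternatively one may cite the reference \cite{OK-KR} for the comparison-theorem argument sketched in the paragraph preceding the lemma.

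The main obstacle is that $\BB$ is not self-adjoint, since ${\bf F}$ need not be a gradient, so the quadratic form above is not the full story: conjugating by $m_0$ produces, besides the symmetric Dirichlet form, a skew-symmetric first-order term of the shape $\int ({\bf F}_{\mathrm{skew}} \cdot \nabla g)\, g\, dx$ (schematically, the piece of ${\bf F}$ not matching $\nabla m_0/m_0$). One must check that this term does not spoil dissipativity or the Nash inequality. The remedy is to keep the full bilinear form and note that the real part of $\int(\tilde\BB g)g$ is governed entirely by the symmetric part, for which the coercivity estimate above holds verbatim — precisely because $\psi^0_{m_0,2}$ already incorporates ${\rm div}({\bf F})$ and ${\bf F}\cdot\nabla m_0/m_0$ via \eqref{eq:Def-psi-mp}, and condition \eqref{eq:CondChamp2} controls $D{\bf F}$. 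Thus the skew part contributes zero to $\mathrm{Re}\int(\tilde\BB g)\bar g$, and the Nash--Moser machinery only needs a lower bound on that real part, which we have. Once this point is settled, the remaining steps are routine: verify the density class is a core, justify the differential inequality $\frac{d}{dt}\|S_{\tilde\BB}(t)g\|_{L^2}^2 \le -2\|\nabla S_{\tilde\BB}(t)g\|_{L^2}^2$, feed it the Nash inequality together with the $L^1$-contraction to close the ODE $y' \le - c\, y^{1+2/d}\,\|g\|_{L^1}^{-4/d}$, and integrate.
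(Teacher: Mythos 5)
Your proposal is correct and follows essentially the same route as the paper: pass to $g = f\,m_0$ (equivalently, conjugate $\BB$ by $m_0$), use the energy identity \eqref{eq:FPhomo-BLp} with $p=2$ to get $\tfrac{d}{dt}\tfrac12\|g(t)\|_{L^2}^2 \le -\|\nabla g(t)\|_{L^2}^2$ after discarding the nonpositive confinement term, combine with Nash's inequality and the $L^1(m_0)$-contraction from Lemma~\ref{lem:SBLpmLp}, and integrate the resulting closed ODE. The extra discussion of self-adjointness and the skew part of the conjugated generator is not needed (the paper simply computes the time-derivative of $\|f\|_{L^2(m_0)}^2$ directly, which automatically picks out only the symmetric real part), and for $L^1\to L^2$ no Nash--Moser iteration is required, just a single integration of the differential inequality — but neither remark affects the validity of your argument.
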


\begin{proof}[Proof of Lemma~\ref{lem:Ultracon-1}.]
The proof is similar to the proof of \cite[Lemma 3.9]{GMM}, see also 
\cite[Section 3]{MM*}, \cite[Section 2]{MQT*} and \cite{OK-Kompaneets}. For the sake of completeness we sketch it below. 

Consider $f_0 \in L^2(m_0)$ and denote $f(t) = S_\BB(t) f_0$.  From \eqref{eq:FPhomo-BLp}
with  $p=2$ and throwing out the last term in that inequality, we find 
$$
{d \over dt} {1 \over 2} \int_{\R^d} f(t)^2   \, m_0^2 dx \le  - \int_{\R^d} |\nabla ( f(t) m_0) |^2 d x. 
$$
Using Nash's inequality for $g:=f(t)m_{0}$ (\cite[Chapter~8]{MR1817225}) stating that for some constant $c > 0$ 
\beqn\label{eq:NashIneq}
  \int_{\R^d} g^2 dx  \le c \, \left( \int_{\R^d} |
  \nabla  g |^2 d x \right)^{\frac{d}{d+2}} \, \left( \int_{\R^d} |g|
  d x \right)^{\frac{4}{d+2}}
\eeqn
we get (for another constant $c > 0$)
\begin{equation}\label{eq:Nash0}
X'(t) \le    - 2 \, c\, Y(t)^{-4/d} \, X(t)^{1+{2\over d}},
\end{equation}
where for brevity of notations we have set
$$
X(t) := \|f(t) \|_{L^2(m_0)}^2, \qquad Y(t) := \| f(t) \|_{L^1(m_0)}.
$$
Since according to \eqref{eq:SBLpcontract} we have  $Y(t)\le Y_0$ for $t > 0$, we may integrate the differential inequality \eqref{eq:Nash0} and obtain \eqref{eq:SBL1L2}. 
\end{proof}

The next result states that the adjoint of $\BB$ generates also an ultracontractive semigroup in the spaces $L^p(m_{0})$.

\begin{lem}\label{lem:Ultracon-2}
Consider the weight function  $m_0 := \exp(\kappa \langle x \rangle^\gamma)$, for $0 < \kappa \gamma < 1$. Then there exists $R_{1} \geq R_{0}$ and $M_{1} \geq M_{0}$ (where $M_0$ and $R_0$ are defined in the previous lemma) such that for $M \geq M_{1}$ and $R \geq R_{1}$, the semigroup generated by $\BB_{*}$,  the formal adjoint of $\BB$, satisfies
\beqn\label{eq:SB*L1L2}
\forall\, t >0, \qquad 
\|S_{\BB_{*}}(t) \|_{L^1(m_0) \to L^2(m_0)} \lesssim t^{-d/4}.
\eeqn
Consequently, for $M \geq M_{1}$ and $R \geq R_{1}$, we have
\beqn\label{eq:SBL2inf}
\forall \, t   > 0, \qquad \qquad \|  S_\BB(t) \|_{L^2(m_0) \to L^\infty(m_0)} \lesssim  t^{-d/4} .
\eeqn
\end{lem}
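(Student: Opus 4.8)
The plan is to derive the adjoint ultracontractivity bound \eqref{eq:SB*L1L2} by exactly the same Nash-inequality argument used for Lemma~\ref{lem:Ultracon-1}, applied now to $\BB_*$ instead of $\BB$, and then to obtain \eqref{eq:SBL2inf} by duality. First I would write down the formal adjoint explicitly: since $\BB f = \Delta f + \mathrm{div}(f\,{\bf F}) - M\chi_R f$, one has $\BB_* g = \Delta g - {\bf F}\cdot\nabla g - M\chi_R g$. The key point is that $\BB_*$ has the same structure as $\BB$ up to a zeroth-order term: conjugating by the weight $m_0 = \exp(\kappa\xx^\gamma)$, one computes an identity analogous to \eqref{eq:FPhomo-BLp}, namely for smooth rapidly decaying positive $g$,
\begin{equation*}
\int_{\R^d} (\BB_* g)\, g^{p-1}\, m_0^p\, dx = -(p-1)\int_{\R^d} |\nabla(m_0 g)|^2 (m_0 g)^{p-2}\, dx + \int_{\R^d} g^p\, m_0^p\, \widetilde\psi^0_{m_0,p}\, dx,
\end{equation*}
where $\widetilde\psi^0_{m_0,p}$ differs from $\psi^0_{m_0,p}$ of \eqref{eq:Def-psi-mp} only in the sign of the $\mathrm{div}({\bf F})$ term and the ${\bf F}\cdot\nabla m_0/m_0$ term (the relevant contributions come from the diffusion and the $\kappa\xx^\gamma$ growth, and these are identical). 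In particular, using the second condition in \eqref{eq:CondChamp} on $\mathrm{div}({\bf F})$ and the first condition on $x\cdot{\bf F}(x)$, one checks that $\widetilde\psi^0_{m_0,p}\,|x|^{2-2\gamma} \to (\kappa\gamma)^2 - \kappa\gamma < 0$ as $|x|\to\infty$ (the dominant $-{\bf F}\cdot\nabla m_0/m_0 \sim -\kappa\gamma\, |x|^{2\gamma-2}$ term is now a \emph{bad} sign but is compensated by $2|\nabla m_0|^2/m_0^2 \sim (\kappa\gamma)^2|x|^{2\gamma-2}$ in the $p=2$ case — actually one must be a little careful here, and this is the main point to verify). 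Choosing $R_1 \geq R_0$ and $M_1 \geq M_0$ large enough, one gets $\widetilde\psi^0_{m_0,2} \leq 0$, hence $\tfrac{d}{dt}\tfrac12\int g(t)^2 m_0^2 \leq -\int |\nabla(g(t)m_0)|^2$.

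Then I would repeat verbatim the Nash-inequality step of Lemma~\ref{lem:Ultracon-1}: with $X(t) := \|S_{\BB_*}(t)g_0\|_{L^2(m_0)}^2$ and $Y(t) := \|S_{\BB_*}(t)g_0\|_{L^1(m_0)}$, Nash's inequality \eqref{eq:NashIneq} applied to $g(t)m_0$ gives $X'(t) \leq -2c\, Y(t)^{-4/d} X(t)^{1+2/d}$, and since $\BB_*$ is likewise dissipative in $L^1(m_0)$ (again choosing $M_1, R_1$ large, using the $p=1$ case of the conjugated identity, where the zeroth-order coefficient is $\widetilde\psi^0_{m_0,1} = \Delta m_0/m_0 + {\bf F}\cdot\nabla m_0/m_0 - M\chi_R$ — wait, signs — in any case one checks $L^1(m_0)$-contractivity holds for $M$ large) we have $Y(t) \leq Y_0$ and integrating yields \eqref{eq:SB*L1L2}. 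For the last statement \eqref{eq:SBL2inf}, I would use the duality identity $\|S_\BB(t)\|_{L^2(m_0)\to L^\infty(m_0)} = \|S_{\BB_*}(t)\|_{L^1(m_0)\to L^2(m_0)}$: indeed the adjoint of $S_\BB(t): L^2(m_0)\to L^\infty(m_0)$ acting between the dual spaces is $S_{\BB_*}(t): L^1(m_0^{-1}\cdot\text{(weight adjustments)}) \to L^2$, and a short computation identifying the weighted duality pairing $\langle f, g\rangle_{m_0} = \int f g\, m_0^2$ shows that the formal adjoint of $\BB$ with respect to this pairing is exactly $\BB_*$ (this is precisely why $m_0$ appears symmetrically in both statements). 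Hence \eqref{eq:SB*L1L2} transposes to \eqref{eq:SBL2inf} with the same $t^{-d/4}$ rate, taking $M \geq M_1$ and $R \geq R_1$.

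The main obstacle I anticipate is bookkeeping the sign of the zeroth-order coefficient $\widetilde\psi^0_{m_0,p}$ for $\BB_*$: for $\BB$ the term $-{\bf F}\cdot\nabla m_0/m_0$ has the favorable sign (it is $\leq -\kappa\gamma|x|\xx^{\gamma-1}\langle x\rangle^{\gamma-1}$ roughly, using $x\cdot{\bf F}\geq 0$), whereas for $\BB_*$ this same term reappears with the opposite, unfavorable sign. The resolution is that in the $p=2$ identity the competing good term $2|\nabla m_0|^2/m_0^2 = 2(\kappa\gamma)^2|x|^2\xx^{2\gamma-4}$ still wins, or rather their sum still gives the $-((\kappa\gamma) - (\kappa\gamma)^2)|x|^{2\gamma-2}$ leading behavior with $\kappa\gamma < 1$; one must simply track that the constant remains strictly negative at infinity (it equals $(\kappa\gamma)^2 - \kappa\gamma$ exactly as in Lemma~\ref{lem:Ultracon-1}, because the cross term and the gradient-squared term combine the same way regardless of the first-order operator's sign — the first-order part contributes only the $-{\bf F}\cdot\nabla m_0/m_0$ piece whose leading term is $\mp\kappa\gamma|x|^2\xx^{2\gamma-4}$ and is lower order relative to... no — it is the \emph{same} order). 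Getting this comparison exactly right, possibly needing a slightly larger threshold $R_1 > R_0$ and $M_1 > M_0$ to absorb the worst constant, is the one genuinely nontrivial check; everything else is a transcription of the already-established arguments. I would also remark, following the discussion in the text preceding Lemma~\ref{lem:Ultracon-1}, that this is consistent with the abstract ultracontractivity criterion of \cite{OK-KR} applied to both $\BB$ and $\BB_*$.
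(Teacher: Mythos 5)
Your overall plan (conjugate, apply Nash, then dualize) is the paper's plan, but the sign issue you flag as ``the main point to verify'' is in fact a genuine obstruction to your version of the argument, and the resolution you sketch does not hold up.

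Concretely: you conjugate $\BB_*$ by $m_0$, i.e.\ you set $h:=g\,m_0$ so that $h$ evolves by $m_0\BB_*(m_0^{-1}\cdot)$. With the general identity
$$m\,B(m^{-1}h)=\Delta h+\left({\bf b}-2\tfrac{\nabla m}{m}\right)\cdot\nabla h+\left(2\tfrac{|\nabla m|^2}{m^2}-\tfrac{\Delta m}{m}+a-{\bf b}\cdot\tfrac{\nabla m}{m}\right)h,$$
and ${\bf b}=-{\bf F}$, $a=-M\chi_R$, the zeroth-order coefficient contains $+{\bf F}\cdot\nabla m_0/m_0$. Carrying this through the $L^p$ evolution identity gives, for $p=2$, the coefficient $\ \tfrac{|\nabla m_0|^2}{m_0^2}+\tfrac12\operatorname{div}{\bf F}+{\bf F}\cdot\tfrac{\nabla m_0}{m_0}-M\chi_R$. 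At infinity the two leading terms are $\ \sim(\kappa\gamma)^2|x|^{2\gamma-2}\ $ and $\ \sim+\kappa\gamma|x|^{2\gamma-2}$, \emph{both positive}. There is no cancellation, and the sum is $\sim\kappa\gamma(1+\kappa\gamma)|x|^{2\gamma-2}>0$, not $((\kappa\gamma)^2-\kappa\gamma)|x|^{2\gamma-2}$ as you assert. Since $\chi_R$ vanishes for $|x|\geq2R$, no choice of $M,R$ drives this below zero, so the differential inequality $X'\leq-2c\,Y^{-4/d}X^{1+2/d}$ is not available and the Nash argument as you set it up breaks down. Your parenthetical hesitation was the right instinct; the ``resolution'' paragraph is an arithmetic slip.

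The second step has a matching problem. You claim the adjoint of $\BB$ with respect to $\langle f,g\rangle_{m_0}=\int fg\,m_0^2$ is $\BB_*$. It is not: that adjoint is $g\mapsto m_0^{-2}\BB_*(m_0^2g)$. The clean statement is via the \emph{unweighted} $L^2$ pairing, for which $(S_\BB(t))^*=S_{\BB_*}(t)$ genuinely holds and the dual spaces are $(L^2(m_0))'=L^2(m_0^{-1})$, $(L^\infty(m_0))'=L^1(m_0^{-1})$. This gives
$$\|S_\BB(t)\|_{L^2(m_0)\to L^\infty(m_0)}=\|S_{\BB_*}(t)\|_{L^1(m_0^{-1})\to L^2(m_0^{-1})},$$
with weight $m_0^{-1}$, not $m_0$. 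Once you aim at this quantity, the two problems disappear together: you now conjugate by $m_0^{-1}$ (set $h:=g/m_0$, so $\|h\|_{L^p}=\|g\|_{L^p(m_0^{-1})}$ and $\partial_th=m_0^{-1}\BB_*(m_0h)$), the drift becomes $-{\bf F}+2\nabla m_0/m_0$, and the $L^p$-dissipation coefficient becomes exactly the paper's $\psi_{*,p,m_0}$ of \eqref{eq:def-phipm}, with the \emph{favorable} term $-{\bf F}\cdot\nabla m_0/m_0$, which is $\leq0$ at infinity under \eqref{eq:CondChamp}. Then the Nash computation and the duality both go through. This is in fact what the formulas \eqref{eq:defB*m} and \eqref{eq:def-phipm} in the text are computing: compare the drift $-{\bf F}+2\nabla m/m$ there, which is the $m_0^{-1}\BB_*(m_0\cdot)$ conjugation (equivalently, the $L^2$-adjoint of $\BB_{m_0}=m_0\BB(m_0^{-1}\cdot)$), not $m_0\BB_*(m_0^{-1}\cdot)$ as you wrote; the line ``$h:=gm_0$'' in the text is a misprint for $h:=g/m_0$.

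So the fix for your write-up is minimal but essential: replace the $m_0$-conjugation of $\BB_*$ by the $m_0^{-1}$-conjugation (equivalently, work with the unweighted adjoint of the already-conjugated operator $\BB_{m_0}$), and replace your $L^2(m_0)$-pairing duality by the unweighted $L^2$ duality, which exchanges $m_0$ and $m_0^{-1}$. As written, the zeroth-order sign cannot be made good and the duality identity you invoke is false.
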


\begin{proof}[Proof of Lemma~\ref{lem:Ultracon-2}.]
We first observe that if the operator $B$ is of the form
$$
B f = \Delta f +  {\bf b}(x) \cdot \nabla f + a(x) \, f,
$$
and we make the transform $h := f \, m$, then the corresponding operator $B_m h := m \, B(m^{-1}\, h)$ is of the same type and is given by 
\begin{equation*}
B_m h = \Delta h + 
\left[{\bf b}(x) - 2 \, {\nabla m \over m} \right]\cdot \nabla h  + 
 \left[- {\Delta m \over m} + 2 \, {|\nabla m|^2 \over m^2} + a - {\bf b}(x) \cdot {\nabla m \over m} \right] \, h.
\end{equation*}
Observe also that the formal adjoint of $B$, denoted by $B_{*}$ to avoid any misunderstanding, is given by 
$$
B_{*} g  = \Delta g - {\bf b}(x) \cdot \nabla g + 
 (a(x) - {\rm div}({\bf b}(x))) \, g.
$$
Applying these observations to  
$
\BB f = \Delta f  + {\bf F} \cdot \nabla f +  ({\rm div} ({\bf F}) - M \chi_R) \, f
$,
we get that for $h := g\, m_{0}$ the operator $\BB_{*,m}$, associated to the formal adjoint $\BB_{*}$, is given by
\begin{equation} \label{eq:defB*m}
\BB_{*,m} h = \Delta h - 
\left[{\bf F} - 2 \, {\nabla m \over m} \right]\cdot \nabla h  + \left[ {\Delta m \over m}   - M \chi_R - {\bf F} \cdot {\nabla m \over m}  \right] \, h.
\end{equation}
Thus, if $g_{0} \geq 0$ is a smooth initial datum, then the solution $g$ of 
$$\partial_{t}g = \BB_{*}g, \qquad g(0,x) = g_{0}(x),$$
yields the function $h := g\, m_{0}$ which satisfies the evolution equation
\begin{equation}\label{eq:eqSBm*}
\partial_t h = \BB_{*,m_0} h, \qquad h(0,x) = h_{0}(x).
\end{equation}
Now, one can verify easily that for $h_{0} \in C^\infty_{c}({\Bbb R}^d)$ and $h_{0} \geq 0$, the solution $h$ to the equation 
$$
\partial_t h = \Delta h + {\bf b}(x) \cdot \nabla h + a(x)\,h, \qquad h(0,x) = h_{0}(x)
$$
satisfies $h(t,x) \geq 0$ and for $ 1 \leq p < \infty$ we have the identity
\begin{equation*}
{d \over dt} {1 \over p}\int h(t,x)^p\,dx  = - (p-1)  \int |\nabla h|^2 \,h^{p-2}\,dx  + {1 \over p}\int (p\, a(x) - {\rm div}({\bf b}(x))) \, h^p\, dx . 
\end{equation*}
As a consequence, applying this to the operator $\BB_{*,m_{0}}$, we have that the solution $h$ of equation \eqref{eq:eqSBm*} verifies
\begin{equation*}
{d \over dt}{1 \over p}\int h(t,x)^p \, dx \leq  - 
(p-1)\int |\nabla h|^2h^{p-2}\,dx +  
\int h^p \, \psi_{*,m_{0},p}\,dx,
\end{equation*}
where, for convenience, we have set 
\begin{equation}\label{eq:def-phipm}
\psi_{*,p,m_0} :=  
 {(p - 2) \over p}) {\Delta m_0 \over m_0}  +  
 {2 \over p} \,  {|\nabla m_0|^2 \over m_0^2} +
 {1 \over p}\, {\rm div}({\bf F}) - {\bf F} \cdot {\nabla m_0 \over m_0}  - M \chi_R  . 
\end{equation}
Proceeding as we did above in the study of the function defined in \eqref{eq:Def-psi-mp}, we may choose, if necessary, $M$ and $R$ large enough (in particular larger than $M_{0},R_{0}$ given by Lemma \ref{lem:Ultracon-1}), so that for all $x \in {\Bbb R}^d$ we have $\psi_{*,m_{0},} \leq 0$. Therefore we conclude that 
\begin{equation}\label{eq:phiLp}
 {d \over dt} {1 \over p}   \|h(t)\|^p_{L^p}  \le -  (p - 1) \, \int |\nabla h|^2h^{p-2}\,dx.
\end{equation}  
On the one hand, taking $p := 1$, we deduce that the semigroup generated by $\BB_{*,m_{0}}$ is a contraction semigroup in $L^1({\Bbb R}^d)$, 
that is $\|h(t)\|_{L^1} \leq \|h_{0}\|_{L^1}$  for all $t \ge 0$.

On the other hand, taking $p :=2 $ and using  Nash's inequality \eqref{eq:NashIneq}, together with the fact that $\|h(t)\|_{L^1}$ is non increasing, we deduce that if we set $X(t) := \|h(t) \|_{L^2}^2$, then for some constant $c >0 $, the function $X(t)$ satisfies the differential inequality
$$
{d \over dt} X(t) \leq - c\, \|h_{0}\|_{L^1}^{-4/d} \, X(t)^{(2+d)/d}. 
$$ 
Integrating this,  we get that for all $t > 0$
$$
\|h(t)\|_{L^2} \lesssim t^{-d/4}\,\|h_{0}\|_{L^1}.
$$
From this, by a density argument and the splitting of any initial datum as the difference of two nonnegative functions,  we conclude that for any $g_{0} \in L^1(m_{0})$  the associated solution to $\partial_{t} g = \BB_{*}g$ satisfies
\begin{equation}\label{eq:SB*-Ultracon}
\|S_{\BB_{*}}(t)g_{0}\|_{L^2(m_{0})} = \|g(t)\|_{L^2(m_{0})} \lesssim t^{-d/4}\, \|g_{0}\|_{L^1(m_{0})},  \quad \forall t > 0,
\end{equation}
which is precisely \eqref{eq:SB*L1L2}.
To conclude the proof of the Lemma, observe that for $f,g\in C^\infty_{c}({\Bbb R}^d)$ we have
\begin{align*}
(\BB f|g)_{L^2(m_{0})} &= \int \BB f\, g\, m_{0}^2\,dx =
\int \BB_{m_{0}}(f\,m_{0})\, (g\,m_{0})\,dx\\
&= \int (f\,m_{0})\, \BB_{*,m_{0}}(g\,m_{0})\,dx =
\int (f\,m_{0})\, m_{0}^{-1}\BB_{*,m_{0}}(g\,m_{0})\, m_{0}(x)^2\,dx\\
&= (f|\BB_{*}g)_{L^2(m_{0})}.
\end{align*}
This allows one to verify that $(S_{\BB}(t))^* = S_{\BB_{*}}(t)$, the adjoint being taken in the sense of the Hilbert space $L^2(m_{0})$, where we assume that this space is identified with its dual. Therefore, since with these conventions we have  $(L^1(m_{0}))' = L^\infty(m_{0})$, thanks to \eqref{eq:SB*-Ultracon}, we conclude \eqref{eq:SBL2inf}. 
\end{proof}

Putting together the previous estimates, we get the following ultracontractivity result on the semigroup $S_\BB$ 
and on the iterated convolution family of operators $(\AA S_\BB)^{(*n)}$. 

\begin{lem}\label{lem:SBLpLq} 
Consider the weight function  $m_0 := \exp(\kappa \langle x \rangle^\gamma)$, for $0 < \kappa \gamma < 1$.  
Then, $M,R$ being large enough as in Lemma \ref{lem:Ultracon-2}, there exists $\lambda_{*} \in (0,\infty)$ such that for any $p,q \in [1,\infty]$, $p\leq q$, and for any $0 \leq \theta_{2} < \theta_{1} \leq 1$, the semigroup $S_\BB$ satisfies  
\beqn\label{eq:SBLpLq-theta}
\|S_\BB(t)\|_{L^p(m_0^{\theta_{2}}) \to L^q(m_{0}^{\theta_{1}})} \lesssim  t^{-(d/2) (1/p - 1/q)}  \, {\rm e}^{- \lambda_{*} \, t^{\gamma/(2-\gamma)} },
\quad \forall \, t > 0.
\eeqn
Moreover, if $n \geq d/2 $ is an integer, for all $\lambda < \lambda_{*}$ and all $t > 0$, we have
 \beqn\label{eq:ASBnL1Linfty}
\|(\AA S_\BB)^{(*n)}(t) \|_{L^1(m_0) \to L^\infty(m^2_0)} \lesssim {\rm e}^{- \lambda \, t^{\gamma/(2-\gamma)}} .
\eeqn
\end{lem}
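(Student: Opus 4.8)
The plan is to first establish the ultracontractivity with exponential rate \eqref{eq:SBLpLq-theta} by combining the three ingredients already at hand: the $L^2(m_0)\to L^\infty(m_0)$ and $L^1(m_0)\to L^2(m_0)$ smoothing estimates of Lemmas~\ref{lem:Ultracon-1} and~\ref{lem:Ultracon-2}, and the decay estimate \eqref{eq:SBLpmLp} of Lemma~\ref{lem:SBLpmLp} with $s=\gamma$, which gives $\|S_\BB(t)\|_{L^p(m_0)\to L^p(m_0^\theta)}\lesssim \exp(-\lambda t^{\gamma/(2-\gamma)})$ for $0\le\theta<1$ and $\lambda<\lambda_*$. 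First I would treat the endpoint case $p=1$, $q=\infty$, $\theta_2=0<\theta_1=1$: writing $S_\BB(t)=S_\BB(t/3)S_\BB(t/3)S_\BB(t/3)$, I apply the $L^1(m_0)\to L^2(m_0)$ bound (gaining $t^{-d/4}$), then the exponential-decay bound $L^2(m_0)\to L^2(m_0^{\theta'})$ for an intermediate $\theta'\in(0,1)$ (gaining $\exp(-\lambda t^{\gamma/(2-\gamma)})$, after rescaling $t/3$), and finally the $L^2(m_0)\to L^\infty(m_0)$ bound; some care is needed here since the last smoothing estimate is stated on $L^2(m_0)$ rather than $L^2(m_0^{\theta'})$, so instead I would place the exponential-decay factor in the middle slot as $L^2(m_0^{\theta_2'})\to L^2(m_0)$ with $\theta_2'<1$, using \eqref{eq:SBLpmLp} in the form $L^2(m_0)\to L^2(m_0^{\theta_2'})$ is the wrong direction — rather one uses that $S_\BB$ maps $L^2(m_0^{\theta})\to L^2(m_0^{\theta})$ contractively and, reading \eqref{eq:SBLpmLp} with base weight $m_0$, $S_\BB(t):L^2(m_0)\to L^2(m_0^{\theta})$ with the stated exponential rate. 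Composing on $[t/3,t/3,t/3]$ with weights $m_0\to m_0^{\theta'}\to m_0^{\theta''}\to$ and noting $m_0^{\theta''}\gtrsim 1$ absorbs the final $L^\infty$ step, one gets \eqref{eq:SBLpLq-theta} for $(1,\infty)$ with the claimed power $-(d/2)(1-0)=-d/2$ and some $\lambda_*>0$.

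Once the endpoint $L^1\to L^\infty$ case is known, the general case of \eqref{eq:SBLpLq-theta} follows by Riesz--Thorin interpolation between this endpoint and the contraction estimate $\|S_\BB(t)\|_{L^p(m_0^\theta)\to L^p(m_0^\theta)}\le 1$ from \eqref{eq:SBLpcontract}: interpolating the pair $(L^1(m_0^{\theta_2}),L^1(m_0^{\theta_2}))$ and $(L^\infty(m_0^{\theta_1}),L^\infty(m_0^{\theta_1}))$ — more precisely one chooses the two anchoring estimates $L^1(m_0^{\theta_2})\to L^\infty(m_0^{\theta_1})$ (just proved, using monotonicity in the weight exponents since $\theta_2<\theta_1$) and $L^p(m_0^{\theta_2})\to L^p(m_0^{\theta_2})$, then interpolates in the integrability index to pass from $p=1$ to general $p\le q$. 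The $t^{-(d/2)(1/p-1/q)}$ power and the surviving exponential factor $\exp(-\lambda_* t^{\gamma/(2-\gamma)})$ come out of the interpolation in the standard way (the exponential factor is common to both anchors up to adjusting $\lambda_*$, and interpolation of $t^{-d/4}$ type factors produces the stated exponent). A short separate argument handles $0\le t\le 1$, where the exponential factor is harmless and the smoothing power is integrable.

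For the second assertion \eqref{eq:ASBnL1Linfty}, I would use that $\AA=M\chi_R$ is bounded on every $L^q(m_0^\theta)$ and, crucially, has compact (bounded) support, so $\AA:L^q(m_0^{\theta})\to L^{q'}(m_0^{\theta'})$ is bounded for \emph{any} $q,q',\theta,\theta'$ — in particular $\AA:L^\infty(m_0^{2})\to L^1(m_0)$ is bounded. Hence each factor $\AA S_\BB(\cdot)$ in the convolution can be read as a map $L^q(m_0)\to L^{q}(m_0)$ with kernel bound $t^{-(d/2)(1/p-1/q)}\exp(-\lambda t^{\gamma/(2-\gamma)})$ for suitable exponents via \eqref{eq:SBLpLq-theta}. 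Taking $n$ factors and iterating: each application of $S_\BB$ between two applications of the (support-compact, hence gain-of-integrability) operator $\AA$ raises the integrability exponent; after $n\ge d/2$ steps the accumulated smoothing power $\prod t_j^{-(d/2)(\cdots)}$ — distributed so that each individual exponent is $<1$ hence locally integrable on $(0,t)$ — convolves to a bounded (indeed continuous) function of $t$, while the exponential factors convolve, by the elementary inequality $\tau^\sigma+(t-\tau)^\sigma\ge c\, t^\sigma$ for $\sigma=\gamma/(2-\gamma)\in(0,1)$, to produce $\exp(-\lambda t^{\gamma/(2-\gamma)})$ for any $\lambda<\lambda_*$ (absorbing the loss of the constant $c$ by shrinking $\lambda$). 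Composing the whole chain as a map $L^1(m_0)\to L^\infty(m_0^2)$, using one copy of $\AA$ at the left end to land in $L^\infty(m_0^2)$ and one at the right end to start from $L^1(m_0)$, yields \eqref{eq:ASBnL1Linfty}.

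The main obstacle I anticipate is bookkeeping the weight exponents and the integrability indices simultaneously through the iterated convolution so that (i) every individual smoothing power stays strictly below $1$ (which forces the condition $n\ge d/2$, since each step can gain at most $d/2$ in the power and we need the total split into $n$ pieces each of size $<1$), and (ii) the exponential rates remain uniformly bounded below by something approaching $\lambda_*$; the convolution-of-stretched-exponentials estimate is elementary but must be applied carefully to avoid degrading the exponent $\gamma/(2-\gamma)$ or losing too much in the constant. The compact support of $\AA$, which decouples the weight exponents entirely, is what makes the scheme go through.
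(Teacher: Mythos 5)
Your proposal follows the same general route as the paper's proof: Nash ultracontractivity from Lemmas~\ref{lem:Ultracon-1}--\ref{lem:Ultracon-2}, exponential decay from Lemma~\ref{lem:SBLpmLp}, Riesz--Thorin interpolation for general $(p,q)$, and the compact support of $\AA$ to decouple the weight exponents in the iterated convolution. Two organizational points are handled more cleanly in the paper, and your sketch is imprecise there. For \eqref{eq:SBLpLq-theta}, the paper first composes $S_\BB(t/2)S_\BB(t/2)$ with the two smoothing bounds to get $\|S_\BB(t)\|_{L^1(m_0)\to L^\infty(m_0)}\lesssim t^{-d/2}$, interpolates against the contraction to obtain $\|S_\BB(t)\|_{L^p(m_0)\to L^q(m_0)}\lesssim t^{-(d/2)(1/p-1/q)}$, and only then appends the exponential-decay step $L^q(m_0)\to L^q(m_0^\theta)$; the general weight exponent is handled simply by remarking that $m_0^{\theta_1}=\exp(\theta_1\kappa\xx^\gamma)$ is again a weight of the admissible class. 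This ordering avoids precisely the weight-matching difficulty you noticed with the $t/3$ split (Lemma~\ref{lem:Ultracon-2} is stated $L^2(m_0)\to L^\infty(m_0)$, not $L^2(m_0^{\theta'})\to L^\infty$). For \eqref{eq:ASBnL1Linfty}, the paper avoids intermediate Lebesgue exponents entirely: it proves $\|(\AA S_\BB)^{(*n)}(t)\|\leq C_n t^{n-1-\alpha}e^{-\lambda t^{\sigma^*_\BB}}$ by \emph{induction} on $n$, splitting the convolution at $t/2$ and pairing, on each half, the factor whose argument is bounded below by $t/2$ (and thus carries the whole $t^{-d/2}$ safely) against a bounded factor, using only the $1\to\infty$, $1\to 1$, $\infty\to\infty$ bounds. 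Your flat distribution of the smoothing over a chain of intermediate $L^{p_j}$ spaces is a legitimate alternative that arrives at the same $t^{n-1-d/2}$ prefactor, at the cost of bookkeeping the exponents so that $1/p_{j-1}-1/p_j<2/d$ at each step. One small slip: the subadditivity $s^\sigma+(t-s)^\sigma\geq t^\sigma$ holds with constant $1$, so there is no ``constant $c$'' to absorb by shrinking $\lambda$; the reduction of $\lambda$ is needed only to absorb the polynomial prefactor $t^{n-1-d/2}$ into the stretched exponential.
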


\begin{proof}[Proof of Lemma~\ref{lem:SBLpLq}.]
{\sl Step 1. } 
Writing $S_{\BB}(t) = S_{\BB}(t/2)S_{\BB}(t/2)$, and using \eqref{eq:SBL1L2} together with \eqref{eq:SBL2inf}, we deduce that for any $t > 0$  
\begin{align}\label{eq:SBL1Linfty}
\|S_\BB(t) \|_{L^1(m_0) \to L^\infty(m_0)} &\leq 
\|S_\BB(t/2) \|_{L^2(m_0) \to L^\infty(m_0)}\,
\|S_\BB(t/2) \|_{L^1(m_0) \to L^2(m_0)} \nonumber\\
&\lesssim t^{-d/2}. 
\end{align}
Since on the other hand we have also $\|S_{\BB}(t)f_{0}\|_{L^p(m_{0})} \leq \|f_{0}\|_{L^p(m_{0})}$, a classical interpolation argument yields that for  $1 \le p \le q \le  \infty$, we have 
 \beqn\label{eq:SBLpLqbis}
\|S_\BB(t) \|_{L^p(m_0) \to L^q(m_0)} \lesssim t^{-(d/2) (1/p - 1/q)}.
\eeqn
Using Lemma \ref{lem:SBLpmLp}, since 
$$\|S_{\BB}(t)\|_{L^q(m_{0}) \to L^q(m_{0}^\theta)} \lesssim 
\exp(-\lambda t^{\gamma/(2-\gamma)}),$$ one sees that the proof of \eqref{eq:SBLpLq-theta} with $\theta_1 = 1$ is complete. To see that \eqref{eq:SBLpLq-theta} holds with $\theta_1 < 1$, it is enough to observe that $m_{0}^{\theta_{1}}(x)$ is of the same type as $m_{0}(x) = \exp(\kappa\xx^\gamma)$ provided $\kappa$ is replaced with $\theta_{1}\kappa$.
\medskip

\noindent {\sl Step 2. } 
In order to show \eqref{eq:ASBnL1Linfty}, first note that the operator $\AA$ consisting simply in a multiplication by a smooth compactly supported function, thanks to the above lemmas, we clearly have, for all $t > 0$, 
\beqn\label{eq:ASBL1Linfty}
\|  \AA S_\BB(t) \|_{L^{p_1}(m_0) \to L^{p_2}(m^2_0)} \lesssim  t^{-\alpha} \, {\rm e}^{-\lambda \, t^{\sigma^*}},
\eeqn
with $\sigma^* := \gamma/(2-\gamma)  \leq 1$ and 
where it is understood that $\alpha := d/2$ if $(p_{1},p_{2}) := (1,\infty)$, and $\alpha := 0$ when $p_{1} = p_{2}$. We claim that the  three estimates for three choices $(p_{1},p_{2}) = (1,\infty)$, and $p_{1} = p_{2} = 1$, as well as $p_{1} = p_{2} = \infty$, imply that for all integers $n \geq 1$ we have
\beqn\label{eq:ASBnL1LinftyBIS}
\|(\AA S_\BB)^{(*n)}(t) \|_{L^{p_1}(m_0) \to L^{p_2}(m^2_0)} \leq C_{n}\, t^{n-1-\alpha}   \, {\rm e}^{-\lambda \, t^{\sigma^*}},
\eeqn
from which one readily deduces \eqref{eq:ASBnL1Linfty}. We prove \eqref{eq:ASBnL1LinftyBIS} by induction.
Estimates  \eqref{eq:ASBnL1LinftyBIS} are clearly true for $n=1$. Let us assume that $p_{1} = 1$ and $p_{2} = \infty$, for which \eqref{eq:ASBnL1LinftyBIS} is true for a certain $n \ge 1$.  
Introducing the shorthand notation $u := \AA S_\BB$ and $\| \cdot \|_{p_1\to p_2} = \|  \cdot \|_{L^{p_1}(m_0) \to L^{p_2}(m^2_0)}$, we have 
\begin{align*}
\|u^{(*(n+1))} (t) \|_{1\to\infty} 
&\le \int_0^{t/2} \| u^{(n)}(t-s) \|_{1\to\infty} \, \| u(s) \|_{1\to1} \, ds 
\\
& \quad + \int_{t/2}^t \| u^{(n)}(t-s) \|_{\infty\to\infty} \, \| u(s) \|_{1\to\infty} \, ds 
\\
&\le C_n \, C_1\,  {\rm e}^{-\lambda \, t^{\sigma^*}}  \int_0^{t/2} (t-s)^{-\alpha + n-1} \, ds 
\\
& + C_n \, C_1 \,  {\rm e}^{-\lambda \, t^{\sigma^*}}   \int_{t/2}^t  (t-s)^{n-1}  \,  s^{-\alpha }  \, ds
\\
&\leq \quad  C_n \, C_1\,  {\rm e}^{-\lambda \, t^{\sigma^*}} t^{-\alpha + n} \left\{\int_0^{1/2} (1-\tau)^{-\alpha+n-1} \, d\tau   \right.\\ 
& \qquad\qquad \left. + \int_{1/2}^1  (1-\tau)^{n-1}  \, \tau^{-\alpha }  \, d\tau \right\} ,
\end{align*}
where we have used the fact that $t^{\sigma^*} \le (t-s)^{\sigma^*} + s^{\sigma^*}$ for any $0 \le s \le t$, since $0 < \sigma^* \leq 1$. This proves estimate \eqref{eq:ASBnL1LinftyBIS} at rank $n+1$ and $(p_1,p_2) = (1,\infty)$. 

The proof of the other cases  $(p_1,p_2) = (1,1)$ and $(p_1,p_2) = (\infty,\infty)$ is similar, if not much simpler, and can be left to the reader. 
\end{proof}

\subsection{Additional  growth estimates} In order to deal with the general case in Section~\ref{sec:NonPoincareSemiGroup},  we will need a more accurate version of the previous estimates. 

\begin{lem}\label{lem:SBL2weight} Consider $m_0 := {\rm e}^{2\kappa \langle x \rangle^\gamma}$ with $\kappa \in (0,1/(4\gamma))$  and
define the sequence of spaces 
\beqn\label{eq:defXk}
X_k := L^2(m_k), \quad m_k  := {m_0   \over \nu_k}, \quad \nu_k(x) :=  \sum_{\ell=0}^k {(\kappa\,  \langle x  \rangle^\gamma)^\ell \over  \ell!  }, 
\eeqn
for any $k \in \N$.
There exist some constants $R$ and $M$ in the definition of $\BB$  and  some constant $\beta > 0$ such that for any $k, j \in \N$,
$k \ge j$ and any $\alpha \in (0,\alpha^*)$,  $\alpha^* := 1/2(1-\gamma)$, the semigroup $S_\BB$ satisfies the growth estimate 
\beqn\label{eq:SBXk-jXk}
\|  S_\BB(t) \|_{X_{k-j} \to X_k} \lesssim  
 {\rm e}^{-  \lambda \, \langle t^\alpha \rangle^{2(\gamma-1)} t}  + \Bigl( 1 \wedge  {k^{j} \over  \kappa^j \, \langle t^\alpha \rangle^{\gamma j}}  \Bigr) \quad \forall \, t > 0.
\eeqn
 
\end{lem}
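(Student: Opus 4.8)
The plan is to run the now-familiar energy-estimate argument for $S_\BB$ in the space $X_k = L^2(m_k)$, but with the refinement that the weight $m_k = m_0/\nu_k$ is a truncated-exponential version of $m_0$, for which the dangerous term in the associated potential $\psi^0_{m_k,2}$ comes with a favorable sign on a region that grows with $k$. First I would compute, as in \eqref{eq:FPhomo-BLp}--\eqref{eq:Def-psi-mp} applied to the weight $m_k$, the identity
$$
\frac{d}{dt}\,\tfrac12\|f(t)\|_{X_k}^2 = -\int |\nabla(m_k f)|^2 + \int (m_k f)^2\,\psi^0_{m_k,2},
$$
and examine $\psi^0_{m_k,2} = \tfrac12\,\Delta m_k/m_k + \tfrac12 |\nabla m_k|^2/m_k^2 + \tfrac12\,{\rm div}\,{\bf F} - {\bf F}\cdot\nabla m_k/m_k - M\chi_R$. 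The essential point is the behavior of $\nabla m_k/m_k = \nabla m_0/m_0 - \nabla\nu_k/\nu_k$: since $\nu_k$ is a partial sum of $\exp(\kappa\xx^\gamma)$, one has $\nabla\nu_k/\nu_k = \kappa\gamma\,x\,\xx^{\gamma-2}\,(\nu_{k-1}/\nu_k)$, and $\nu_{k-1}/\nu_k \to 1$ but with a deficit of size $\sim (\kappa\xx^\gamma)^k/k!$. Plugging the first condition of \eqref{eq:CondChamp} into $-{\bf F}\cdot\nabla m_k/m_k$ and using \eqref{eq:CondChamp2-b}, one finds that $\psi^0_{m_k,2}(x) \le -c\,\xx^{2\gamma-2}$ for $\xx$ in a range controlled by $k$ — roughly $\xx^\gamma \lesssim k/\kappa$ — while outside that range $\psi^0_{m_k,2}$ is merely bounded by a constant times $\xx^{2\gamma-2}$ (no sign), exactly as in the proof of Lemma~\ref{lem:SBLpmLp}, Step~3.

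Second, I would split the integral $\int (m_k f)^2\,\psi^0_{m_k,2}$ over $B_\rho$ and $B_\rho^c$ for a radius $\rho = \rho(t)$ to be optimized, exactly as in Step~3 of the proof of Lemma~\ref{lem:SBLpmLp}, but now carrying along the relation $m_k = m_{k-j}\cdot(\nu_{k-j}/\nu_k)$ to estimate the $B_\rho^c$ contribution by a factor $\sup_{B_\rho^c}(\nu_{k-j}/\nu_k)^2\,\|f_0\|_{X_{k-j}}^2$. The ratio $\nu_{k-j}/\nu_k$ is $\le 1$ and, for $\xx^\gamma$ not too large compared with $k$, is of order $\big(\kappa\xx^\gamma/k\big)^{j}$ up to combinatorial constants; this is where the bound $\big(1 \wedge k^j/(\kappa^j\langle t^\alpha\rangle^{\gamma j})\big)$ in \eqref{eq:SBXk-jXk} comes from, once we set $\rho \sim t^\alpha$. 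The differential inequality then reads
$$
\frac{d}{dt}Y(t) \le -a\,\langle\rho\rangle^{2\gamma-2}\,Y(t) + a\,\langle\rho\rangle^{2\gamma-2}\Big(\tfrac{\kappa\langle\rho\rangle^\gamma}{k}\Big)^{2j} C^{2j}\,\|f_0\|_{X_{k-j}}^2,
$$
with $Y(t) := \|f(t)\|_{X_k}^2$; integrating and choosing $\rho$ of order $t^\alpha$ (so that $\langle\rho\rangle^{2\gamma-2}t$ is comparable to $\langle t^\alpha\rangle^{2(\gamma-1)}t$) produces the two terms on the right of \eqref{eq:SBXk-jXk}, the $e^{-\lambda\langle t^\alpha\rangle^{2(\gamma-1)}t}$ coming from the homogeneous part and the $\big(1\wedge k^j/(\kappa^j\langle t^\alpha\rangle^{\gamma j})\big)$ from the inhomogeneous part. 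The case $p=\infty$ is not needed here since we work throughout in $X_k = L^2(m_k)$.

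The main obstacle I anticipate is the bookkeeping needed to make the constants uniform in $k$ and $j$ simultaneously: one must verify that the constant $a>0$ in $\psi^0_{m_k,2}\le -a\,\xx^{2\gamma-2}$ on the relevant ball can be chosen independently of $k$ (this requires checking that the $\nu_{k-1}/\nu_k$ correction does not spoil the sign uniformly, which is true because that ratio lies in $[1-\delta,1]$ on $\{\kappa\xx^\gamma \le k\}$ for a fixed small $\delta$), and that the combinatorial factor controlling $\nu_{k-j}/\nu_k$ on $B_\rho^c$ is genuinely of the claimed form $(C\kappa\langle\rho\rangle^\gamma/k)^{j}$ — this amounts to an elementary but slightly delicate estimate on ratios of partial sums of $e^y$, of the type $\nu_{k-j}(y)/\nu_k(y) \le (k!/(k-j)!)\,y^{-j}\cdot(\text{something bounded}) \le (Ck/y)^{j}$ for $y = \kappa\xx^\gamma$. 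Also, the constraint $\alpha < \alpha^* = 1/(2(1-\gamma))$ must be tracked: it is exactly what makes $\langle t^\alpha\rangle^{2(\gamma-1)}t$ a nondecreasing unbounded function of $t$ (equivalently $2\alpha(1-\gamma) < 1$), which is needed for the homogeneous term to decay. Once these three points are in hand the rest is the same Grönwall-plus-optimize-$\rho$ computation already performed twice in the paper.
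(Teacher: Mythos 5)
Your plan follows the same route as the paper: the $L^2(m_k)$ energy estimate, the pointwise comparison $m_k(x) \le \bigl( 1 \wedge  k^{j}/(\kappa^j \langle x \rangle^{\gamma j})\bigr) m_{k-j}(x)$, the $B_\rho / B_\rho^c$ split, a Gr\"onwall step, and then $\rho = t^\alpha$. However, your claim that $\psi^0_{m_k,2}(x) \le -c\langle x\rangle^{2\gamma-2}$ holds only on a $k$-dependent range $\langle x\rangle^\gamma \lesssim k/\kappa$ (losing its sign outside that range) is wrong and, if taken at face value, would derail the argument: the dissipative inequality $\frac{d}{dt}Y_k \le -2\lambda\int f^2 m_k^2 \langle x\rangle^{2(\gamma-1)}$ is needed over all of $\R^d$, \emph{before and independently of} the choice of $\rho$, and with $\rho = t^\alpha$ the ball $B_\rho$ eventually leaves any such $k$-dependent range. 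In fact the dissipativity is uniform in $x$ and $k$, and for a simpler reason than the $\nu_{k-1}/\nu_k \in [1-\delta,1]$ bookkeeping you anticipate: one has
$$
\frac{\nabla m_k}{m_k} \;=\; \gamma\kappa\, x\,\langle x\rangle^{\gamma-2}\,\bigl(2 - \nu_{k-1}/\nu_k\bigr),
$$
and $\nu_{k-1}/\nu_k \in [0,1]$ for every $x$ and every $k$, so the scalar factor lies in $[1,2]$ everywhere. Using $2$ as upper bound for $|\nabla m_k/m_k|^2 \le 4\gamma^2\kappa^2 |x|^2\langle x\rangle^{2\gamma-4}$ and $1$ as lower bound for $-{\bf F}\cdot\nabla m_k/m_k \le -\gamma\kappa |x|^\gamma\langle x\rangle^{\gamma-2}$ on $B_{R_0}^c$, the hypothesis $\kappa < 1/(4\gamma)$ gives $4\gamma^2\kappa^2 < \gamma\kappa$, hence $\psi_k \le -2\lambda\langle x\rangle^{2(\gamma-1)}$ on all of $\R^d$ uniformly in $k$ once $R,M$ are fixed. (You also inverted a ratio in your sample differential inequality: the inhomogeneous term should carry $\bigl(k/(\kappa\langle\rho\rangle^\gamma)\bigr)^{2j}$, capped at $1$, not $\bigl(\kappa\langle\rho\rangle^\gamma/k\bigr)^{2j}$; your surrounding prose and your combinatorial estimate $\nu_{k-j}/\nu_k \le (k/y)^j$ are consistent with the correct version.) With these two corrections your plan is the paper's proof verbatim.
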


\begin{proof}[Proof of Lemma~\ref{lem:SBL2weight}.]
We easily compute 
$$
{\nabla m_k \over m_k}  = \gamma \, \kappa \, x \, \langle x \rangle^{\gamma - 2}  \, 
\Bigl[  2    - { 1+ ... + (\kappa\,  \langle x  \rangle^\gamma)^{k-1} / (k-1)! \over 1+ ... + (\kappa\,  \langle x  \rangle^\gamma)^k / k! } \Bigr]   ,
$$
from which we deduce 
$$
\Bigl| {\nabla m_k \over m_k}\Bigr|   \le 2 \,  \gamma \, \kappa \, |x| \, \langle x \rangle^{\gamma - 2}    ,
$$
and
$$
-  {\bf F} \cdot {\nabla m_k \over m_k}    \le  -  \gamma \, \kappa \, |x|^\gamma \, \langle x \rangle^{\gamma - 2}   \quad \forall \, x \in B_{R_0}^c.
$$
As a consequence, for $f(t) := S_{\BB}(t)f_{0}$, we have 
$$
{d \over dt}  \int f^2 \, m_k^2  \le - \int |\nabla f|^2 \, m_k^2 + \int f^2 \, m_k^2  \, \psi_k,
$$
with
\bean
\psi_k 
&=&  {|\nabla m_k|^2 \over m_k^2} +
{1 \over 2}  \, {\rm div}({\bf F}) - {\bf F} \cdot {\nabla m_k \over m_k} - M \, \chi_R
\\
&\le&  4 \,  \gamma^2 \, \kappa^2 \, |x|^2 \, \langle x \rangle^{2\gamma - 4} + {1 \over 2}  \, C'_F \, \langle x \rangle^{\gamma-2} 
\\
&&
\qquad -  \gamma \, \kappa \, |x|^\gamma \, \langle x \rangle^{\gamma - 2} \, {\bf 1}_{B_{R_0}^c} - M \, \chi_R
\\
&\le& -  2\lambda  \, \langle x \rangle^{2(\gamma - 1)}
\eean
for any $x \in \R^d$ and $k \in \N$,  by fixing $\kappa > 0$ small enough (as we did) and then $R$  and $M$ large enough.  We deduce 
\beqn\label{eq:dtYk}
{d \over dt}  \int f^2 \, m_k^2  \le - \int |\nabla f|^2 \, m_k^2  - 2\lambda \, \int f^2 \, m_k^2 \,  \langle x \rangle^{2(\gamma - 1)}, 
\eeqn
and in particular 
$$
Y_k(t) :=    \int f^2 \, m_k^2  \le Y_k(0) \quad\mbox{for any}\quad  k \ge 0.
$$
We now observe that  for any $j \in \N$, $0 \le j \le k$, there hold $m_k \le m_{k-j}$ as well as  
$$
m_k(x) 
\le {m_0 (x)\over   \nu_{k-j}(x) \,( \kappa \, \langle x \rangle^\gamma /k)^j}  
= {k^{j} \over  \kappa^j \, \langle x \rangle^{\gamma j}}  \, m_{k-j}(x) \quad \forall \, x \in \R^d.
$$
The two inequalities together, we have proved 
\beqn\label{eq:mkmkj}
\forall \, j \le k, \,\forall \, x \in \R^d \quad m_k(x) \le \Bigl( 1 \wedge  {k^{j} \over  \kappa^j \, \langle x \rangle^{\gamma j}}  \Bigr) \, m_{k-j}(x).
\eeqn
As a consequence, for any $\rho > 0$, we have 
\bean
Y_k 
&=& \int_{B_\rho} f^2 \, m_k^2 + \int_{B_\rho^c} f^2 \, m_k^2
\\
&\le& \langle \rho \rangle^{2(1-\gamma)} \int f^2 \, m_k^2  \, \langle x \rangle^{2(\gamma - 1)} +  \Bigl( 1 \wedge  {k^{j} \over  \kappa^j \, \langle \rho \rangle^{\gamma j}}  \Bigr)^2 \, Y_{k-j}.
\eean
Coming back to \eqref{eq:dtYk}, we deduce 
$$
{d \over dt} Y_k \le - 2\lambda \, \langle \rho \rangle^{2(\gamma-1)}  \, Y_k +  2\lambda \, \langle \rho \rangle^{2(\gamma-1)} \Bigl( 1 \wedge  {k^{j} \over  \kappa^j \, \langle \rho \rangle^{\gamma j}}  \Bigr)^2 \, Y_{k-j}(0), 
$$
which in turn implies 
$$
Y_k(t) \le \Bigl\{  {\rm e}^{-  2\lambda \, \langle \rho \rangle^{2(\gamma-1)} t}  + \Bigl( 1 \wedge  {k^{j} \over  \kappa^j \, \langle \rho \rangle^{\gamma j}}  \Bigr)^2 \Bigr\}  Y_{k-j}(0) \quad \forall \, t \ge 0, \, \rho > 0. 
$$
We conclude by making the choice  $\rho = t^{\alpha}$ for $\alpha \in (0,\alpha^*)$. 
\end{proof}

\begin{lem}\label{lem:SBL2H1} Consider $m_0 := {\rm e}^{\kappa \langle x \rangle^\gamma}$ with $\kappa \in (0,1/(2\gamma))$. 
There exist two constants  $C, \lambda > 0$ such that $S_\BB$ satisfies  (recall that $\sigma^* = \gamma/(2-\gamma)$)
\beqn\label{eq:SBL2H1}
 \|  S_\BB(t) \|_{L^2(m_0) \to H^1} \le  {C \over t^{1/2}}  \, {\rm e}^{- \lambda \, t^{\sigma^*}} \quad \forall \, t  >  0.
\eeqn

\end{lem}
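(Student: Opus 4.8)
The plan is to combine the ultracontractivity/decay estimate \eqref{eq:SBLpLq-theta} (with $p=q=2$, which gives the pure $L^2(m_0)\to L^2(m_0)$ gain ${\rm e}^{-\lambda_* t^{\sigma^*_\BB}}$) with a parabolic energy estimate that produces the $H^1$-regularization with the $t^{-1/2}$ factor. First I would record the basic energy identity obtained exactly as in \eqref{eq:FPhomo-BLp} with $p=2$ and weight $m=m_0={\rm e}^{\kappa\xx^\gamma}$: for $f(t)=S_\BB(t)f_0$ and $g:=f\,m_0$,
$$
\frac{d}{dt}\,\frac12\int g^2\,dx = -\int|\nabla g|^2\,dx + \int g^2\,\psi^0_{m_0,2}\,dx,
$$
and, by the computation of $\psi^0_{m_0,p}$ already carried out in the proof of Lemma~\ref{lem:SBLpmLp} (here $s=\gamma$, $a^*=\kappa\gamma-(\kappa\gamma)^2>0$ since $\kappa\gamma<1/2<1$), one may choose $R,M$ large so that $\psi^0_{m_0,2}(x)\le -a\,\xx^{2\gamma-2}\le 0$ for all $x$. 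Hence
$$
\frac{d}{dt}\,\frac12\,\|g(t)\|_{L^2}^2 \le -\int|\nabla g|^2\,dx,
$$
which already gives $\|g(t)\|_{L^2}\le\|g_0\|_{L^2}$ and, integrating in time on $(0,t)$, the a priori bound $\int_0^t\|\nabla g(\tau)\|_{L^2}^2\,d\tau\le \tfrac12\|g_0\|_{L^2}^2$.

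The next step is the standard ``multiply by time'' trick to upgrade this to a pointwise-in-$t$ gradient bound. One needs a second differential inequality controlling $\frac{d}{dt}\|\nabla g\|_{L^2}^2$; since $g$ solves $\partial_t g = \BB_{m_0} g$ with $\BB_{m_0} g = \Delta g + \tilde{\bf b}\cdot\nabla g + \tilde a\, g$ where $\tilde{\bf b} = -{\bf F}+2\nabla m_0/m_0$ and $\tilde a = \psi$-type coefficient, and since by \eqref{eq:CondChamp2} and the choice of $m_0$ we have $|\tilde{\bf b}(x)|\le C\xx^{\gamma-1}$, $\tilde a \le 0$, $|D\tilde{\bf b}|\lesssim\xx^{\gamma-2}$ bounded, differentiating $\frac12\|\nabla g\|_{L^2}^2$ and integrating by parts gives
$$
\frac{d}{dt}\,\frac12\,\|\nabla g\|_{L^2}^2 \le -\int|\Delta g|^2 + C\int|\nabla g|^2 + (\text{lower order}) \le C'\,\|\nabla g\|_{L^2}^2 ,
$$
after absorbing the first-order terms into $-\int|\Delta g|^2$ via Young's inequality (the coefficients being bounded). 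Then $\frac{d}{dt}\bigl(t\,\|\nabla g(t)\|_{L^2}^2\bigr)\le \|\nabla g\|_{L^2}^2 + C' t\,\|\nabla g\|_{L^2}^2$; combined with the integrated bound above and Grönwall on a bounded time interval, this yields $\|\nabla g(t)\|_{L^2}^2\lesssim t^{-1}\|g_0\|_{L^2}^2$ for, say, $0<t\le1$, i.e. $\|S_\BB(t)f_0\|_{H^1(m_0^{-0})}\lesssim t^{-1/2}\|f_0\|_{L^2(m_0)}$ on $(0,1]$ — here I should be slightly careful that $H^1$ on the left is the plain (unweighted) $H^1$, so I control $\|\nabla(f m_0)\|_{L^2}$ and $\|f m_0\|_{L^2}$, hence $\|f\|_{L^2}\le\|f m_0\|_{L^2}$ and $\|\nabla f\|_{L^2}\le \|\nabla(f m_0)\|_{L^2} + \||\nabla m_0/m_0|\, f m_0\|_{L^2}\lesssim \|\nabla(fm_0)\|_{L^2}+\|fm_0\|_{L^2}$ since $|\nabla m_0/m_0|\lesssim\xx^{\gamma-1}$ is bounded.

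For the long-time factor ${\rm e}^{-\lambda t^{\sigma^*_\BB}}$, I would split $S_\BB(t)=S_\BB(t/2)S_\BB(t/2)$: the first factor maps $L^2(m_0)\to L^2(m_0)$ with norm $\lesssim {\rm e}^{-\lambda_* (t/2)^{\sigma^*_\BB}}$ by \eqref{eq:SBLpLq-theta} (with $\theta_1=\theta_2=1$, $p=q=2$), and the second factor maps $L^2(m_0)\to H^1$ with norm $\lesssim (t/2)^{-1/2}$ by the short-time estimate just established (applied on $(0,1]$ when $t\le2$, and for $t\ge2$ we already have $t/2\ge1$ so one should instead run the ``multiply by time'' argument globally in $t$ but starting the Grönwall at time $t/2$ using $\|\nabla g(t/2)\|_{L^2}\lesssim\|g_0\|_{L^2}$ from the integrated bound, giving $\|\nabla g(t)\|_{L^2}\lesssim\|g_0\|_{L^2}$ for $t\ge1$, which is more than enough since $t^{-1/2}$ is then bounded). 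Chaining the two factors gives $\|S_\BB(t)\|_{L^2(m_0)\to H^1}\lesssim t^{-1/2}{\rm e}^{-\lambda t^{\sigma^*_\BB}}$ for any $\lambda<\lambda_*/2^{\sigma^*_\BB}$, which is \eqref{eq:SBL2H1}.

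The main obstacle I anticipate is the second energy estimate on $\|\nabla g\|_{L^2}^2$: one must check that all the variable-coefficient terms coming from $\tilde{\bf b}$ and $\tilde a$ (in particular the derivative of the drift, which involves $D{\bf F}$ and second derivatives of $m_0$) are genuinely controlled — but $|D{\bf F}|\lesssim\xx^{\gamma-2}$ by \eqref{eq:CondChamp2}, $|\nabla m_0/m_0|\lesssim\xx^{\gamma-1}$, $|\Delta m_0/m_0|\lesssim\xx^{2\gamma-2}$, all bounded since $\gamma<1$, so every such term is a \emph{bounded} perturbation and is absorbed either into $-\int|\nabla g|^2$ (for the $\|g\|_{L^2}^2$ estimate, already handled by $\psi^0_{m_0,2}\le0$) or into $-\int|\Delta g|^2$ (for the $\|\nabla g\|_{L^2}^2$ estimate) via Young's inequality. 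Since this is an entirely standard parabolic regularization argument with bounded lower-order coefficients, I would only sketch it, as the paper does for its analogous lemmas, referring to \cite[Lemma 3.9]{GMM} and \cite{MM*} for the details.
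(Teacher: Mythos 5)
Your overall plan (weighted energy identity for $\int|\nabla g|^2$, ``multiply by time'' to get the $t^{-1/2}$ gain, then split $S_\BB(t)=S_\BB(t/2)S_\BB(t/2)$ to bring in the stretch-exponential factor) is the same strategy the paper uses; the paper instead works directly with $\|\nabla f\|_{L^2(m_0)}$ and closes the short-time estimate through the interpolation $\|\nabla f\|^2_{L^2(m_0)}\le \|f\|_{L^2(m_0)}\|f\|_{H^2(m_0)}+C\|f\|^2_{L^2(m_0)}$ rather than a Gr\"onwall-in-$t\|\nabla g\|^2$, but these are interchangeable. However, the long-time half of your splitting has a genuine gap.

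You invoke \eqref{eq:SBLpLq-theta} with $p=q=2$ and $\theta_1=\theta_2=1$ to claim decay $\|S_\BB(t)\|_{L^2(m_0)\to L^2(m_0)}\lesssim {\rm e}^{-\lambda_* t^{\sigma^*_\BB}}$; but Lemma~\ref{lem:SBLpLq} requires $\theta_2<\theta_1$ strictly, and in fact no decay is available with the \emph{same} weight on both sides: in $L^2(m_0)\to L^2(m_0)$ the only estimate is the contraction \eqref{eq:SBLpcontract}. The dissipation term $-a\int f^2 m_0^2\langle x\rangle^{2\gamma-2}$ degenerates at infinity, so the stretch-exponential decay is obtained \emph{only} by trading weight, as in Step~3 of the proof of Lemma~\ref{lem:SBLpmLp}. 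You can repair the chain in two ways: either
\begin{itemize}
\item note that your short-time argument actually gives the stronger bound $\|S_\BB(t)\|_{L^2(m_0)\to H^1(m_0)}\lesssim t^{-1/2}$ (you control $\|\nabla(fm_0)\|_{L^2}$, which is $\|\nabla f\|_{L^2(m_0)}$ up to a bounded-coefficient correction), and then compose with the weight-loss estimate $\|S_\BB(t)\|_{H^1(m_0)\to H^1}\lesssim \Theta_{m_0}(t)$ — this is exactly \eqref{eq:SBH1mH1} in the paper, obtained from the modified Lyapunov functional $\int(f^2+\eta|\nabla f|^2)m_0^2$; or
\item use $\|S_\BB(t/2)\|_{L^2(m_0)\to L^2(m_0^\theta)}\lesssim{\rm e}^{-\lambda(t/2)^{\sigma^*_\BB}}$ for some $\theta<1$ (Lemma~\ref{lem:SBLpmLp}) and then prove your short-time $H^1$-regularization with the weight $m_0^\theta={\rm e}^{\kappa\theta\xx^\gamma}$ in place of $m_0$ (valid since $\kappa\theta\gamma<1/2$).
\end{itemize}
As written, the splitting does not close.

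One smaller point worth flagging: in your second energy inequality the ``lower order'' term $\int(\nabla\tilde a)\,g\,\nabla g$ hides a factor $\nabla\,{\rm div}\,{\bf F}$, i.e.\ a second derivative of ${\bf F}$, which is \emph{not} controlled by the hypotheses (only ${\bf F}\in C^1$ with $|D{\bf F}|\lesssim\xx^{\gamma-2}$). You need to integrate by parts once more — as the paper does, rewriting $\int(\partial_i{\rm div}\,{\bf F})f\,\partial_if\,m_0^2$ in terms of $({\rm div}\,{\bf F})\,|\nabla f|^2$, $({\rm div}\,{\bf F})\,f\,\Delta f$, and $({\rm div}\,{\bf F})\,f\,\nabla f\cdot\nabla m_0^2$ — before absorbing into $-\int|D^2g|^2$ via Young's inequality. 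Your sketch asserts all coefficient derivatives are bounded, which is false for $\nabla\tilde a$ prior to this integration by parts.
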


\begin{proof}[Proof of Lemma~\ref{lem:SBL2H1}.]
 The function $f = S_\BB(t) f_0$ satisfies  
$$
\partial_t \partial_i f = \Delta \partial_i f + ( \partial_i {\rm div} ({\bf F})) f +  {\rm div}({\bf F}) \, \partial_i f + \partial_i {\bf F}_j \partial_j f + {\bf F} \cdot \nabla \partial_i f - M \partial_i \chi_R f 
- M \chi_R \partial_i f, 
$$
so that 
\bean 
&&{1 \over 2} {d \over dt} \int |\nabla f |^2 m_0^2 
= - \int |D^2 f |^2 m_0^2 + {1 \over 2}  \int |\nabla f |^2 \Delta m_0^2 - \int ({\rm div} {\bf F}) \,  f (\Delta f) \,  m_0^2
\\
&&\qquad 
 - \int ({\rm div}({\bf F})) \,  f \, \nabla f \cdot \nabla m_0^2   - \int \partial_i {\bf F}_j \partial_i f \partial_j f \,  m_0^2
   - {1 \over 2}  \int ({\rm div}({\bf F})) \, | \nabla f |^2  m_0^2
\\
&&\qquad
 - {1 \over 2}  \int | \nabla f |^2  {\bf F} \cdot \nabla  m_0^2 + M \int {\rm div} (\nabla \chi_R m^2_0) f^2  - \int  M \chi_R |\nabla f|^2 
\\
&&\qquad\le - {1 \over 2} \int |D^2 f |^2 m_0^2 dx+ \int |\nabla f|^2 m_0^2  \psi_{1} \, dx+ \int f^2 m_0^2  \psi_{2} \, dx,
\eean
with 
\bean
\psi_{1}(x) := {1  \over 2} {\Delta m^2_0 \over m_0^2} + {1  \over 2} | {\rm div}({\bf F})| {|\nabla m^2_0| \over m_0^2} + {3 \over 2}  |D{\bf F}| - {\bf F} \cdot {\nabla m_0 \over m_0} - M \chi_R 
\eean
and
\bean
\psi_{2}(x) := ({\rm div}({\bf F}))^2 + {1  \over 2} {\Delta m^2_0 \over m_0^2} + {1  \over 2} | {\rm div} {\bf F}| {|\nabla m^2_0| \over m_0^2} +  M  |\Delta \chi_R |  
+ M |\nabla \chi_R| { |\nabla m_0^2|  \over  m^2_0}.
\eean
Choosing $M$ and $R$ large enough, we have for some constants $a > 0$ and $C \in \R$
$$
\psi_{1}(x) \le - a \, \langle x \rangle^{2(\gamma-1)}, \qquad  \qquad 
\psi_{2}(x) \le C  \, \langle x \rangle^{2(\gamma-1)}, 
$$
and, choosing then $\eta > 0$ small enough,  we get
\bean
{d \over dt} \int (f^2 + \eta  |\nabla f |^2)  m_0^2 &\le&  - {1 \over 2} \int (|\nabla  f |^2  + \eta  |D^2 f |^2 )  m_0^2
 \\
&&\qquad  - a  \int (f^2 + \eta  |\nabla f |^2) \,  m_0^2 \, \langle x \rangle^{2(\gamma-1)}. 
 \eean
 On the one hand, keeping only the second term and arguing as in the proof of Lemma~\ref{lem:SBLpmLp}, 
 we get
\beqn\label{eq:SBH1mH1}
\|  S_\BB(t) \|_{\BBB(H^1(m_0),H^1)} \le  \Theta_{m}(t), \quad \forall \, t \ge 0.
\eeqn
On the other hand, using the elementary inequality 
 $$
 \int |\nabla f|^2 m_0^2 = -   \int f D^2 f m_0^2 + {1 \over 2} \int f^2 \Delta m_0^2 \le \|  f \|_{L^2(m_0)} \, \|  f \|_{H^2(m_0)} + C \, \|  f \|^2_{L^2(m_0)},
 $$
 the differential inequality
 $$
{d \over dt}  \|  \nabla f \|_{L^2(m_0)} ^2 \le -  \|  D^2 f \|_{L^2(m_0)} ^2 + \| \psi^2 \|_{L^\infty } \| f \|_{L^2(m_0)} ^2 
$$
and the contraction in $L^2(m_0)$, we then obtain
$$
{d \over dt}  \|  \nabla f \|_{L^2(m_0)} ^2 \le -  \|  f_0 \|_{L^2(m_0)}^{-2} \, \|  \nabla f \|_{L^2(m_0)} ^4 + \| \psi^2 \|_{L^\infty }  \| f _0\|_{L^2(m_0)} ^2 . 
$$
 As in the proof of Lemma~\ref{lem:SBLpmLp}, we deduce
\beqn\label{eq:SBL2mH1m}
\|  \nabla f \|_{L^2(m_0)} ^2 \le {C \over t} \| f_0 \|_{L^2(m_0)} ^2 \quad \forall \, t \in(0,1).
\eeqn
We easily deduce \eqref{eq:SBL2H1} by gathering \eqref{eq:SBH1mH1} and  \eqref{eq:SBL2mH1m}. 
\end{proof}

\section{Boundedness of  $S_\LL$  }
\label{sec:bornesL}
\setcounter{equation}{0}
\setcounter{theo}{0}

In this section, we establish some estimates on $S_\LL$ which are simple results yielded by the iterated Duhamel formula \eqref{eq:Duhamel-gene} and the previous estimates on $S_\BB$.

\begin{lem}\label{lem:LpmLL} For any exponent $p \in [1,\infty]$ and any weight function $m$ given by \eqref{eq:def:weightpoly}, \eqref{eq:def:weightexpo1} or   \eqref{eq:def:weightexpo2},    there exists $C(m,p)$ such that 
$$
\|  S_\LL(t) \|_{\BBB(L^p(m))} \le C(m,p), \quad \forall \, t \ge 0.
$$
\end{lem}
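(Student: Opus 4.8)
The plan is to read the uniform bound off the iterated Duhamel formula \eqref{eq:Duhamel-gene}, feeding it the dissipativity and decay estimates for $S_\BB$ from Section~\ref{sec:bornesB} together with the single external fact that the Fokker--Planck flow is a contraction semigroup on $L^1(\R^d)$ (which follows from mass conservation and positivity preservation stated in the introduction). A recurring point is that, since $\AA = M\chi_R\,\cdot$ is multiplication by a function of $\DD(\R^d)$, the operator $\AA$ maps every weighted space $L^p(m)$ boundedly into $L^p(m')$ for \emph{every} weight $m'$ — in particular into exponentially weighted spaces — so that composing $\AA$ with $S_\BB$ allows one to convert the (possibly slow) decay in a large space into fast decay in a small one.

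For the exponential weights $m=\exp(\kappa\xx^s)$ of \eqref{eq:def:weightexpo1}--\eqref{eq:def:weightexpo2} the argument is immediate. By Lemma~\ref{lem:SBLpmLp}, $\|S_\BB(t)\|_{\BBB(L^p(m))}\le 1$ and $\|S_\BB(t)\|_{L^p(m)\to L^p(m^\theta)}\lesssim \exp(-\lambda t^{s/(2-\gamma)})$, so $h(t):=\|\AA S_\BB(t)\|_{\BBB(L^p(m))}\lesssim \exp(-\lambda t^{s/(2-\gamma)})\in L^1(\R_+)\cap L^\infty(\R_+)$, whence each iterated convolution $(\AA S_\BB)^{(*k)}$ has $\BBB(L^p(m))$-norm dominated by $h^{*k}\in L^1(\R_+)$; combined with the contractivity of $S_\BB$ this makes every tail term $S_\BB*(\AA S_\BB)^{(*k)}$ in \eqref{eq:Duhamel-gene} bounded on $L^p(m)$ uniformly in $t$. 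For the remainder, take $n_1=n_2=n$ large and, using the ultracontractivity of $S_\BB$ and of $S_{\BB_*}$ (Lemmas~\ref{lem:Ultracon-1}--\ref{lem:SBLpLq}) together with the compact support of $\AA$, check that $\|(\AA S_\BB)^{(*n)}(\cdot)\|_{L^p(m)\to L^1}$ and $\|(S_\BB\AA)^{(*n)}(\cdot)\|_{L^1\to L^p(m)}$ lie in $L^1(\R_+)$; since $\|S_\LL(\tau)\|_{\BBB(L^1)}\le 1$, the triple convolution $(S_\BB\AA)^{(*n)}*S_\LL*(\AA S_\BB)^{(*n)}$ is bounded on $L^p(m)$ uniformly in $t$ (a merely bounded middle factor does not spoil an $L^1*L^1$ estimate). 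Summing finitely many terms gives $\|S_\LL(t)\|_{\BBB(L^p(m))}\le C(m,p)$.

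For the polynomial weights $m=\xx^k$, $k>k^*$, I would use instead the one-step identity $S_\LL = S_\BB + (S_\BB\AA)*S_\LL$ from \eqref{eq:Duhamel-1}. Viewing $\AA=M\chi_R\,\cdot$ as a bounded map $L^p_k\to L^p(m_0)$ into an exponentially weighted space $m_0=\exp(\kappa\xx^s)$ with $s<\gamma$, and $S_\BB(t):L^p(m_0)\to L^p(m_0^\theta)\hookrightarrow L^p_k$ with stretch–exponential decay of rate $\lambda$ (Lemma~\ref{lem:SBLpmLp}, where $\lambda$ may be taken arbitrarily large in the case $s<\gamma$), one obtains $\|S_\BB(t)\AA\|_{\BBB(L^p_k)}\le k(t)$ with $k(t)=C\exp(-\lambda t^{s/(2-\gamma)})$ and, $\lambda$ being fixed large enough, $\|k\|_{L^1(\R_+)}<1$. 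Since $\phi(t):=\|S_\LL(t)\|_{\BBB(L^p_k)}$ is locally bounded (Theorem~\ref{th:wellposedness}) and satisfies $\phi(t)\le 1+(k*\phi)(t)$, the generalized Gronwall lemma yields $\phi(t)\le(1-\|k\|_{L^1})^{-1}$ for all $t\ge 0$. The case $p=\infty$ follows by letting $p\to\infty$ as in Lemma~\ref{lem:SBLpmLp}, and the inclusion $L^p(m)\subset L^1(\R^d)$, valid for all the weights considered, makes every convolution and trace above meaningful.

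The step I expect to be the main obstacle is the quantitative bookkeeping needed to make the relevant operator-valued kernels integrable in time with the right norms — concretely, closing the Gronwall estimate for the polynomial weights, which requires that the amplification $\|\chi_R\,\cdot\,\|_{L^p_k\to L^p(\exp(\kappa\xx^s))}$ be beaten by $\int_0^\infty\exp(-\lambda t^{s/(2-\gamma)})\,dt$; this forces one to exploit the $a^*=+\infty$ regime of Lemma~\ref{lem:SBLpmLp} (arbitrarily fast decay of $S_\BB$ on $L^p(\exp(\kappa\xx^s))$ for $s<\gamma$, at fixed splitting parameters $M,R$) and to choose $s,\kappa,\lambda$ accordingly. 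The analogous issue for the exponential weights — verifying the $L^1(\R_+)$ mapping properties of $(\AA S_\BB)^{(*n)}$ and $(S_\BB\AA)^{(*n)}$ — is softer, since there the single-step decay is already integrable and one only needs $n$ large enough to absorb the ultracontractive singularity at $t=0$.
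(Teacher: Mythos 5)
Your treatment of the exponential weights is sound and follows the paper's spirit (iterated Duhamel, integrability of $\|\AA S_\BB\|$, ultracontractivity to route through $L^1$ where $S_\LL$ is a contraction). But the polynomial case, as you set it up, has a genuine gap. You want $\|S_\BB(t)\AA\|_{\BBB(L^p_k)}\le k(t)$ with $\|k\|_{L^1(\R_+)}<1$ via the factorization $L^p_k\xrightarrow{\AA}L^p(\exp(\kappa\xx^s))\xrightarrow{S_\BB(t)}L^p_k$, and you invoke ``arbitrarily large $\lambda$ at fixed $M,R$.'' This is precisely where the argument fails: the rate $\lambda$ and the prefactor are coupled, and you cannot push one without ruining the other. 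Concretely, in the stretch-exponential case the leading negative term in $\psi^0_{m,p}$ is $-{\bf F}\cdot\nabla m/m\sim-\kappa s|x|^{\gamma+s-2}$, so $a^*=\kappa s$ is \emph{finite} (not $+\infty$), and the stretch-exponential rate one extracts from the proof of Lemma~\ref{lem:SBLpmLp} satisfies $\lambda\lesssim p(1-\theta)^{1-s/(2-\gamma)}s^{s/(2-\gamma)}\kappa$. So $\lambda$ only grows by increasing $\kappa$; but then $\|\AA\|_{L^p_k\to L^p(\exp(\kappa\xx^s))}\sim M\exp(\kappa\langle 2R\rangle^s)$ blows up exponentially in $\kappa$, while $\lambda^{-(2-\gamma)/s}$ decays only polynomially, so $\|k\|_{L^1}\to\infty$. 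The same phenomenon occurs if you try an intermediate polynomial weight $\xx^{k'}$: the norm of $\AA:L^p_k\to L^p_{k'}$ grows like $(2R)^{k'-k}$ while the decay exponent $(k'-k)/(2-\gamma)$ grows only linearly. So the closed Gronwall inequality $\phi\le 1+k*\phi$ with $\|k\|_{L^1}<1$ is not available, and you never get out of the self-referential loop.

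The paper avoids exactly this trap by never routing $S_\LL$ through $L^p_k$ itself. For $p=1$ it writes the single-step Duhamel and estimates the triple product along the chain $L^1_k\hookrightarrow L^1\xrightarrow{S_\LL(s)}L^1\xrightarrow{\AA}L^1_{k+\ell}\xrightarrow{S_\BB(t-s)}L^1_k$, using the a priori contraction $\|S_\LL\|_{\BBB(L^1)}\le 1$ (from mass conservation and positivity) for the middle factor, the compact support of $\AA$ to jump weight, and, with $\ell=2(2-\gamma)$, the integrable decay $\|S_\BB\|_{L^1_{k+\ell}\to L^1_k}\lesssim\langle t\rangle^{-2}$ for the outer factor. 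No Gronwall and no smallness condition are needed: the middle factor is merely bounded, while one outer factor is in $L^1(\R_+)$, which is exactly the algebra $L^1*L^\infty\subset L^\infty$ you correctly used in the exponential case. For $p\in(1,\infty]$ the paper iterates Duhamel with $n_1\ge d/2+1$, $n_2=0$, inserting ultracontractivity to step down to $L^1$. In fact, your iterated-Duhamel argument for the exponential weights, with $(S_\BB\AA)^{(*n)}*S_\LL*(\AA S_\BB)^{(*n)}$, works essentially verbatim in the polynomial case too — so the correct fix is not to switch to Gronwall for polynomial $m$, but to keep the same scheme and merely use $(1+t)^{-\beta}$ decay for the first $S_\BB$ and stretch-exponential decay for all the subsequent ones (once $\AA$ has pushed you into an exponentially weighted space).
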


\begin{proof}[Proof of Lemma~\ref{lem:LpmLL}.] The estimate 
\beqn\label{eq:bounfLL1}
\|  S_\LL(t) \|_{\BBB(L^1)} \le 1, \quad \forall \, t \ge 0, 
\eeqn
is clear and is a consequence of the fact that $S_\LL$ is a positive and mass conserving semigroup. 

\smallskip\noindent
{\bf Step 1. } We consider first the case $p=1$ and we write the Duhamel formula
$$
S_\LL = S_\BB   +  S_\BB * \AA S_\LL,
$$
which allows one to deduce that 
\begin{equation*}
\| S_\LL \|_{L^1_k \to L^1_k} \le 
\|S_\BB  \|_{L^1_k \to L^1_k}  + \|S_\BB \|_{L^1_{k+\ell} \to L^1_k}  * \|  \AA S_\LL \|_{L^1 \to L^1_{k+\ell}}    \le C, 
\end{equation*}
because $t\mapsto \|  \AA S_\LL (t)\|_{L^1 \to L^1_{k+\ell}}$ is a bounded function  thanks to \eqref{eq:bounfLL1}, and, upon choosing $\ell := 2 (2-\gamma)$ and applying Lemma \ref{lem:SBLpmLp}, we have $\|S_\BB \|_{L^1_{k+\ell} \to L^1_k} \lesssim \langle t \rangle^{-2}$, showing that $\|S_\BB (t)\|_{L^1_{k+\ell} \to L^1_k}$ belongs to $L^1(0,\infty)$. 

\smallskip\noindent
{\bf Step 2. } In order to study the case $1 < p \leq \infty$, we write the iterated Duhamel formula  \eqref{eq:Duhamel-gene} with $n_{1} := \ell+1$ and $n_{2} := 0$ and we get
$$
S_\LL = S_\BB  + \cdots +   S_\BB * (\AA S_\BB)^{(*(\ell-1))}   +   S_\BB * (\AA S_\BB)^{(*\ell)}  *  (\AA S_\LL)  
$$
with $\ell := d/2+1$ and then we infer that
\begin{align*}
\|  S_\LL \|_{\BBB(L^p(m))} 
&\le  \sum_{j = 0}^{\ell-1} \|S_\BB \|_{\BBB(L^p(m))}  *  \|  \AA S_\BB  \|_{\BBB(L^p(m))}^{(*j)} 
\\
& \hskip -1cm +  \|S_\BB \|_{L^p(\varpi) \to L^p(m)} *   \|  (\AA S_\BB )^{(*\ell)} \|_{L^1(\varpi) \to L^\infty(\varpi^2) } * \|  \AA S_\LL \|_{L^p(m) \to L^1(\varpi)}  ,
\end{align*}
where $\varpi := {\rm e}^{\nu \, \langle x \rangle^\gamma}$  with $\nu \in (0,1/\gamma)$ large enough. 
Using \eqref{eq:ASBL1Linfty} and \eqref{eq:ASBnL1Linfty}, we see that the RHS term is uniformly bounded as the sum of $\ell+1$ functions, each one being the convolution of one $L^\infty$ function with  integrable functions. 
\end{proof}

\begin{rem} 
When $p=1$,  $m = \langle x \rangle^k$ and $f(t) \ge 0$, the above estimate means exactly that for $k > 2-\gamma > 1$, there exists a constant $C_k$ such that 
$$
M_k(S_\LL(t)f_0) \le C_k \, M_k(f_0), \quad M_k(f) := \int_{\R^d} f \, \langle x \rangle^k \, dx. 
$$
We then recover (with a simpler proof) and generalize (to a wider class of confinement force fields) a result obtained by Toscani and Villani in \cite[section 2]{MR1751701}. 
\end{rem}


\section{The {\bf Case 1} when a Poincar\'e inequality holds} 
\label{sec:Poincare}
\setcounter{equation}{0}
\setcounter{theo}{0}

In this section we restrict our analysis to {\bf Case 1},  that is when furthermore the steady state $G$ to the Fokker-Planck
equation  \eqref{eq:FPeq} is such that  the {\it weak weighted Poincaré-Wirtinger inequality}  \eqref{eq:wPineq} holds true and   the function $V := -\log G$ is of order $\langle x \rangle^\gamma$, that is it satisfies  \eqref{eq:Vsimxgamma}. We startb by showing the probably classical fact that \eqref{eq:wPineq} holds in the case when $c_1 = c_2$ in \eqref{eq:Vsimxgamma}. 
The proof uses some  of the arguments exposed in \cite{MR2386063,MR2609591}. 

\begin{lem}\label{lem:wwPineq:case1} Consider a potential function $V$ such that \eqref{eq:Vsimxgamma} holds with $c_1 = c_2$ and denote $G := e^{-V}$ (that we assume to have mass $1$).
There exists $\mu \in (0,\infty)$ such that the following weak weighted  Poincar\'e inequality 
$$
\int |\nabla h |^2 G \,  dx \ge \mu \int h^2 \langle x \rangle^{2(\gamma-1)} G \,  dx
$$
holds for any $h \in C^1_b(\R^d)$ such that $\M(hG) = 0$.  
\end{lem}

\begin{proof}[Proof of Lemma~\ref{lem:wwPineq:case1}.] We observe that $|\nabla V(x)|^2 \sim c_1^2 \gamma^2 |x|^{2\gamma-2}$ and $|\Delta V(x)| \sim c_1 \gamma |\gamma +d -2| |x|^{\gamma-2}$, so that 
\beqn \label{chCoercive:Cond1NVDV}
\forall  \, \theta \in (0,1), \,\, \exists R_1 > 0, \quad \theta |\nabla V|^2 \ge \Delta V \,\,\hbox{on}\,\, B_{R_1}^c.
\eeqn

We fix $h \in \DD(\R^d)$ such that $\M( h G ) = 0$. 
On the one hand, defining $g = h \, e^{-a V}$ and using the identity 
$$
\int |\nabla h|^2 \, e^{-V} 
=
\int |\nabla g|^2 \, e^{(2a-1)V} + a \int h^2 [ (1-a) |\nabla V|^2 -  \Delta V ] \, e^{-V}
$$
with $a := (1-\theta)/2$ together with \eqref{chCoercive:Cond1NVDV}, we obtain a first estimate
\beqn \label{eq;PoincareBRc}
{(1-\theta)^2\over 4}   \int h^2  | \nabla V |^2   \, e^{-V} 
\le 
\int |\nabla h|^2 \, e^{-V} +b \int_{B_{R_0}} h^2    \, e^{-V} .
\eeqn
On the other hand,   using the Poincar\'e-Wirtinger inequality   in $H^1(B_{R},Gdx)$, we get
\beqn\label{eq:PW-R}
\int_{B_{R}} h^2    \, {G \over \M(G{\bf 1}_{B_R})} \le C_R \, \int_{B_{R}} |\nabla h|^2 \,  {G \over \M(G{\bf 1}_{B_R}) } + \Bigl( \int_{B_{R}} h \,  {G \over \M(G{\bf 1}_{B_R}) } \Bigr)^2,
\eeqn
with possibly $C_R \to \infty$ when $R \to \infty$. 
Using the vanishing moment condition $\M( h G) = 0$ and the Cauchy-Schwarz inequality, we also have
\beqn\label{eq:PW-CS}
 \Bigl( \int_{B_{R}} h \, G \Bigr)^2 
= \Bigl( \int_{B^c_{R}} h \, G \Bigr)^2 \le
\eps_R \int_{B^c_R} h^2 \,  | \nabla V |^2 G,
\eeqn
with 
$$
\eps_R :=   \int_{B^c_R}    | \nabla V |^{-2}\, e^{-V} \to 0
\quad\hbox{as}\quad R\to\infty.
$$
Gathering \eqref{eq:PW-R} and \eqref{eq:PW-CS} for $R \ge R_2$, we obtain the second estimate
\bear\nonumber
 \int_{B_{R}} h^2    \, e^{-V} 
&\le& C_R \, \int_{B_{R}} |\nabla h|^2 \, e^{-V} \, dx + 2  \Bigl( \int_{B_{R}} h \, e^{-V} \, dx \Bigr)^2
\\ \label{eq:PW2eme}
 &\le& C_R \, \int_{B_{R}} |\nabla h|^2 \, e^{-V} \, dx + 2\eps_R \int_{B^c_R} h^2 \,  | \nabla V |^2\, e^{-V}.
\eear
Inserting \eqref{eq:PW2eme} into \eqref{eq;PoincareBRc} with $R \ge \max(R_1,R_2)$, we obtain
\bean
{(1-\theta)^2\over 4} \int h^2   | \nabla V |^2   \, e^{-V}
\le
(1 + C_R) \, \int |\nabla h|^2 \, e^{-V} 
 + 2\eps_R  \int_{B^c_R} h^2 \,  | \nabla V |^2\, e^{-V},
\eean
 and we conclude by taking  $R$ large enough in such a way that $2\eps_R < {(1-\theta)^2/ 4} $.  
\end{proof}

\subsection{Decay estimate in a small space}  \label{subsec:RocknerWang}

We first recall the following decay estimate in a small space. We define 
$$
E_1 := L^2 (G^{-1/2}), \quad E_2 := L^\infty(G^{-1}).
$$

\begin{theo}[R\"ockner-Wang \cite{MR1856277}] \label{theo:RocknerWang} Under assumptions \eqref{eq:wPineq}--\eqref{eq:Vsimxgamma} on the steady state $G$, set 
\begin{equation}\label{eq:Def-ThetaG}
\Theta_{G^{-1}}(t) := \exp(- \lambda t^{\gamma/(2-\gamma)}).
\end{equation}
Then there exists $\lambda^*$ such that for $\lambda < \lambda_{*}$ the semigroup $S_\LL$ satisfies for all $t > 0$
\beqn\label{eq:DecaySmallRevers}
\|S_\LL(t) f_0 - \M(f_0)\, G \|_{E_1} \lesssim \Theta_{G^{-1}}(t) \, \|  f_0 - \M(f_0)\, G \|_{E_2}. 
\eeqn
\end{theo}

We  briefly sketch a proof of Theorem~\ref{theo:RocknerWang} which uses as a cornerstone the  weak Poincar\'e inequality \eqref{eq:wPineq}. 
We refer to \cite{MR1856277} for a more detailed proof which uses closedly related (but different) arguments and more precisely which are based on the so-called {\it weak Poincar\'e inequality} (see \cite{MR1856277,MR2609591,MR2381160}). 

\begin{proof}[Proof of Theorem~\ref{theo:RocknerWang}]
We define $V := - \log G$, we set   ${\bf F}_0 := {\bf F} - \nabla V$ and we observe that ${\rm div}({G\bf F}_0) = 0$. The Fokker-Planck equation \eqref{eq:FPeq} can be rewritten as
\begin{equation}\label{eq:FP-GF0}
 \partial_t f = {\rm div} [  G  \nabla (fG^{-1}) + f {\bf F}_0 ]. 
\end{equation}
Now, since $f{\bf F}_{0} = fG^{-1}\cdot G{\bf F}_{0}$ and ${\rm div} (G{\bf F}_{0}) = 0$, we have 
$${\rm div}(f{\bf F}_{0}) = \nabla(fG^{-1})\cdot G{\bf F}_{0}.$$ 
Thus if $j : {\Bbb R} \longrightarrow {\Bbb R}$ is a locally Lispchitz function such that $j(fG^{-1}) \in L^1$, we see that $j'(fG^{-1}) {\rm div}(f {\bf F}_0) = \nabla j(fG^{-1})\cdot G{\bf F}_{0}$ and therefore
\begin{equation}\label{eq:fF0-j} 
\int\!\! j'(fG^{-1}) {\rm div}(f {\bf F}_0)  = \int \nabla j(fG^{-1})\cdot G{\bf F}_{0} = - \int\!\!  j(fG^{-1}) {\rm div}( G{\bf F}_{0}) = 0.
\end{equation}
In particular, taking $j(s) := s^2/2$ we see that
 $$
 \int fG^{-1} {\rm div}(f {\bf F}_0)  = 0, 
 $$
and as a consequence, multiplying \eqref{eq:FP-GF0} by $fG^{-1}$ we obtain
\bear\nonumber
{1 \over 2} {d \over dt}  \int f^2 G^{-1} 
& = &  - \int G\, |  \nabla (fG^{-1})|^2 + \int {\rm div} [ f {\bf F}_0 ] fG^{-1}
\\ \label{eq:small1}
&\le& - \mu \, \int f^2 \, \langle x \rangle^{2\gamma-2} \, G^{-1}, 
\eear
thanks to the weak Poincar\'e inequality \eqref{eq:wPineq}.  

More generally, for any convex function $j : \R \to \R$, locally in $W^{2,\infty}$, using \eqref{eq:fF0-j} on sees easily that the following generalized relative entropy inequality  holds
 (see \cite{MR2162224}, and also \cite{MR1856277} and the references therein)
 \bean
 {d \over dt}    \int j(f G^{-1}) \, G  
 & = &  \int j'(f G^{-1})  {\rm div} [  G \nabla (fG^{-1}) ]  + \int j'(f G^{-1}) {\rm div} [ f {\bf F}_0 ]  
\\
&=&  - \int j''(f G^{-1}) |\nabla (fG^{-1})|^2  \le 0.
\eean
In particular, taking $j(s) = |s|^p$, we obtain
$$\|f(t)G^{-1}\|_{L^p} \leq \|f_{0}G^{-1}\|_{L^p},$$
and passing to the limit $p\to\infty$, we get  
\beqn\label{eq:small2}
\| f(t)G^{-1}\|_{L^\infty} \le \|f_0 G^{-1}\|_{L^\infty},   \quad \forall \, t \ge 0.
\eeqn
We obtain \eqref{eq:DecaySmallRevers} by gathering the two estimates \eqref{eq:small1} and \eqref{eq:small2}. More precisely, 
we write, with $k := 2 - 2\gamma$,
\begin{align*}
\int f^2 G^{-1} 
&\le \int_{B_R} f^2 \, G^{-1} 
+ \int_{B_R^c} f^2 \, G^{-1}
\\
&\le R^{k} \int_{B_R} f^2 \, \, G^{-1} \, \langle x \rangle^{-k}
+ \| fG^{-1} \|_{L^\infty}^2  \int_{B_R^c}  G.
\end{align*}
Together with \eqref{eq:small1} and \eqref{eq:small2}, we get for $R \ge R_0$
\begin{equation*} 
{d \over dt}  \int f^2 G^{-1} 
\le - R^{-k} \int f^2 G^{-1}  +  \| f_0G^{-1} \|_{L^\infty}^2   \int_{B_R^c}  {\rm e}^{- c_1 |x|^\gamma}.
\end{equation*}
Integrating in time, for any $\theta \in (0,1)$, we find a constant $C_{\theta} > 0$ such that
\begin{align*}
\int f^2 G^{-1} 
  &\le  {\rm e}^{-t\, R^{-k}}    \int f^2_0 \, G^{-1} + C_\theta \, {\rm e}^{- \theta c_1  R^\gamma} \, \| f_0G^{-1} \|_{L^\infty}^2
 \\
 &\le {\rm e}^{- \lambda \, t^{\gamma/(2-\gamma)} }  \, \| f_0G^{-1} \|_{L^\infty}^2, 
\end{align*}
  by choosing $R := (t/(\theta c_1))^{1/(2-\gamma)}$ for $t$ large enough and defining $\lambda := (\theta c_1)^{- 1/(2-\gamma)}$,  
 which is nothing but \eqref{eq:DecaySmallRevers}. 
 \end{proof}

\subsection{Rate of decay in a large space}\label{subsec:rateLargeSpace}
We now present the proof of our main Theorem~\ref{th:MAIN} in the {\bf Case 1}  when furthermore $G$ satisfies the weak  Poincar\'e inequality 
\eqref{eq:wPineq} and the asymptotic estimates  \eqref{eq:Vsimxgamma}. 
We recall that  the projection operator $\Pi$ is defined by
$$
\Pi f := \M(f)\, G.
$$

\medskip\noindent
{\bf Step 1. } Here we consider the case $p \in [1,2]$. We begin by fixing $m$ a polynomial or exponential weight function, 
and we introduce a  stronger confinement exponential weight $m_0 (x):= \exp(\kappa_0 \, \langle x \rangle^\gamma)$, with $\kappa_0 \in (0,1/\gamma)$. 
We denote by $\Theta_m(t)$, $\Theta_{m_0}(t)$ and $\Theta_{G^{-1}}(t)$ the associated rate of decay  defined in \eqref{eq:MainEstimPoly}, \eqref{eq:MainEstimExpo1} and  in the statement of Theorem~\ref{theo:RocknerWang}, and we may assume that $\Theta_{m_0}/\Theta_m, \Theta_{G^{-1}}/\Theta_m \in L^1(0,\infty)$. 
We split the semigroup on invariant spaces 
$$
S_\LL = \Pi \, S_\LL + (I-\Pi) \, S_\LL , 
$$
and together with the iterated Duhamel formula \eqref{eq:Duhamel-gene} with $n_{1} = 0$ and $n_2 := n \ge d/2 +1$, 
we can write 
$$ S_\LL - \Pi = S_{\LL}(I- \Pi) =  (I - \Pi)S_{\LL}= T_{1} + T_{2},$$
where  
\bear
T_{1} & := & 
(I-\Pi) \, \left(S_\BB + \sum_{\ell=1}^{n-1} (S_\BB \AA)^{(*\ell)} *  S_\BB   \right) \label{eq:Def-T1} \\
T_{2} &:= & \left((I-\Pi) S_\LL \right)  \, * (\AA S_\BB)^{(*n)}. \nonumber
\eear
In order to estimate the first term $T_1$,  we recall that 
$$
\|  S_\BB(t) \|_{L^p(m) \to L^p} \le \Theta_{m}(t), \quad
\|  S_\BB(t) \AA \|_{L^p  \to L^p} \le \Theta_{m_0}(t) , 
$$
thanks to Lemma~\ref{lem:SBLpmLp}.  We write 
$$
\| T_1  \|_{\BBB(L^p(m),L^p)}  \le u_0+ ... + u_{n-1}
$$
where, for $\ell \in \{1, ..., n-1 \}$, we set
$$
 u_\ell := C  \, \|  S_\BB \AA \|_{\BBB(L^p)}^{(*\ell)} *   \|  S_\BB \|_{\BBB(L^p(m),L^p)}. 
$$
Since clearly $\Theta_m^{-1}(t) \le   \Theta_m^{-1}(s) \,  \Theta_m^{-1}(t-s)$ for all $0 \le s \le t$, we deduce 
$$
 \|  \Theta_m^{-1} u_\ell \|_{L^\infty_t} \le   \|  \Theta_m^{-1} \, S_\BB \AA \|_{L^1_t(\BBB(L^p))}^\ell \,    \|  \Theta_m^{-1} \, S_\BB \|_{L^\infty_t(\BBB(L^p(m),L^p))}, 
$$
and then 
$$
\forall \, t \ge 0, \qquad \|  T_1(t)  \|_{\BBB(L^p(m),L^p)} \le C \, \Theta_m(t) .
$$

\smallskip
In order to estimate the second term $T_2$, we recall that from Lemma~\ref{lem:SBLpmLp}, Lemma~\ref{lem:SBLpLq} and Theorem~\ref{theo:RocknerWang}, 
we have 
\bean
&& \|  \AA S_\BB(t) \|_{\BBB(L^p(m),L^1(m_0))} \le \Theta_{m}(t), \\
&& \|  (\AA S_\BB)^{(*(n-1))} (t) \|_{\BBB(L^{1}(m_0),L^{\infty} (m_1) )} \le  \Theta_{m_0}(t),  \\
&& \|  S_\LL(t) (I-\Pi) \|_{\BBB(L^\infty(G^{-1}),L^2(G^{-1/2}))} \le \Theta_{G^{-1}}(t),
\eean
where  $m_1 := m_0 + G^{-1}$. We thus obtain that $T_2$ satisfies the same estimate as $T_1$. 
We deduce \eqref{eq:MainEstim} by gathering the two estimates obtained on $T_1$ and $T_2$. 

\medskip\noindent
 {\bf Step 2. } We consider next the case  $p \in [2,\infty]$. Thanks to the  iterated Duhamel formula \eqref{eq:Duhamel-gene} with $n_{1} := n \geq d/2 + 1$ and $n_{2} = 1$, and using the fact that
$$(S_{\BB}\AA)^{(*n)} = S_{\BB}*(S_{\BB}\AA)^{(*(n-1))}*\AA,$$
we can write
$$S_\LL - \Pi = T_{1} + T_{3}$$ 
where $T_{1}$ is defined again by \eqref{eq:Def-T1} and 
$$T_{3} := (I - \Pi)S_\BB * (\AA S_\BB)^{(*(n-1))} * \AA S_{\LL} *(\AA S_\BB)$$
and we conclude in a similar way as in Step 1. \qed 


\section{The general case - the stationary problem}  
\label{sec:NonPoincareStatProblem}
\setcounter{equation}{0}
\setcounter{theo}{0}

In this section, we present the proof of the existence and uniqueness of  the solution to the stationary problem as claimed in Theorem~\ref{th:wellposedness} in the general case, that is assuming that the force field satisfies conditions \eqref{eq:CondChamp}--\eqref{eq:CondChamp2-b}, without any further assumption on the existence and particular properties of a positive stationary state.

In order to do so, we define the ``small'' Hilbert  space
$$
X :=  L^2(m_{0}), \quad m_{0} := \exp(\kappa \, \langle x \rangle^\gamma), \quad \kappa \in (0,1/ \gamma),  
$$
and we study some of the spectral properties of the opertaor $\LL$ acting in $X$. 
We consider below the eigenvalue problem on the imaginary axis. We start recalling some standard notions and notations. For an operator $\Lambda$ acting in a Banach space $X$, 
we denote $N(\Lambda) := \Lambda^{-1}(\{ 0 \})$ its null space, $\Sigma(\Lambda)$ its spectrum, $\Sigma_P(\Lambda)$ its point spectrum set, that is the set of the eigenvalues of $\Lambda$, while $\rho(\Lambda) := \C \backslash \Sigma(\Lambda)$ will denote its resolvent set and $R_\Lambda(z)$, for $z\in \rho(\Lambda)$, the resolvent operator defined by 
$$
R_{\Lambda}(z) := (\Lambda-z I)^{-1} .
$$
We denote $X_+$  the positive cone of $X$. 
In this section we shall prove 

\begin{theo}\label{th:SigmaL} There exists a unique steady state $G \in X$, positive, normalized with total mass $\M(G) = 1$, such that 
\beqn\label{eq:NLk=CG}
N(\LL^n) = {\rm span}(G), \quad \forall \, n \ge 1.
\eeqn
Moreover, there holds
\beqn\label{eq:SigmaPL=0}
\Sigma_P(\LL) \cap \left\{z \in {\Bbb C} \; ; \; {\Re e}\, z \geq 0 \right\} = \{ 0 \}.
\eeqn
Finally, estimate \eqref{eq:EstimGLinfty} holds true. 
\end{theo}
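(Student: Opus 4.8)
The plan is to adapt the Krein--Rutman machinery for positive semigroups, in the variant developed in \cite{EMRR,MS*}, to the non-compact operator $\LL$ acting in the small Hilbert space $X_0 = L^2(m_0)$. First I would collect the structural facts already at hand: the splitting $\LL = \AA + \BB$ with $\AA = M\chi_R$ a bounded operator of relatively compact range (multiplication by a compactly supported smooth function, hence compact from $L^2(m_0)$ into itself by Rellich), and the decay estimates of Section~\ref{sec:bornesB}, in particular the ultracontractivity estimate \eqref{eq:SBLpLq-theta} and the iterated estimate \eqref{eq:ASBnL1Linfty}, which give $\|(\AA S_\BB)^{(*n)}(t)\|_{X_0 \to X_0} \lesssim e^{-\lambda t^{\sigma^*_\BB}}$ for $n$ large. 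Writing $R_\LL(z) = R_\BB(z) + R_\BB(z)\AA R_\LL(z)$ and iterating, together with the fact that $\Sigma(\BB)$ lies in a half-plane $\{\Re e\, z \le -\kappa_0\}$ for some $\kappa_0 > 0$ (a consequence of the dissipativity estimate \eqref{eq:BdissipLp} with the coercive term $\langle x\rangle^{\gamma+s-2}$, which is only $o(1)$ at infinity — so in fact $\Sigma(\BB)$ touches the imaginary axis, and one only gets $\Re e\, z \le 0$ with $0$ possibly in the spectrum), one obtains that $\LL - z$ is invertible on $X_0$ for $\Re e\, z > 0$ except at a discrete set of eigenvalues of finite algebraic multiplicity, by the power-compactness of $\AA R_\BB(z)$ (Voigt's notion, \cite{Voigt80}) and the analytic Fredholm alternative. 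This is the step where I expect the most technical care to be needed: establishing that $(\AA R_\BB(z))^n$ is compact on $X_0$ uniformly enough (locally uniformly in $z$ up to the imaginary axis) so that the Ribarič--Vidav / Voigt argument applies and $\Sigma(\LL) \cap \{\Re e\, z \ge 0\}$ consists only of isolated eigenvalues of finite multiplicity.

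Next I would produce the eigenvalue $0$ and its eigenfunction $G$. Since $S_\LL$ is mass-conserving, $M(S_\LL(t)f) = M(f)$, so $1 \in \Sigma(S_\LL(t))$ for each $t$, whence $0 \in \Sigma(\LL)$; combined with the preceding, $0$ is an isolated eigenvalue of finite algebraic multiplicity. To get a \emph{positive} eigenfunction one argues as in the Krein--Rutman theorem: the resolvent $R_\LL(z)$ for $z > 0$ real is positivity-improving (inherited from the strong positivity of $S_\BB$ and the Duhamel series with positive terms $\AA, S_\BB \ge 0$), so the spectral projector $\Pi_0$ onto $N(\LL^\infty)$ near $0$, obtained as $\Pi_0 = \lim_{z \to 0^+} z\, R_\LL(z)$ (using mass conservation to control the blow-up rate), is a positive rank-one-in-sign operator; this forces the eigenvalue $0$ to be algebraically simple on $X_0$ — here one uses that $\LL^*$ fixes constants, so $\langle G, 1\rangle \ne 0$ prevents a Jordan block — and produces $G > 0$ after normalizing $M(G) = 1$. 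Uniqueness of the positive normalized steady state then follows from simplicity of $0$ as an eigenvalue, and \eqref{eq:NLk=CG} is exactly the statement $N(\LL^n) = N(\LL) = \mathrm{span}(G)$ for all $n$.

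Finally, for \eqref{eq:SigmaPL=0} I would rule out eigenvalues on the imaginary axis other than $0$. Suppose $\LL f = i\xi f$ with $\xi \in \R$, $\xi \ne 0$, $f \in X_0 \setminus \{0\}$; by elliptic regularity $f$ is smooth and, since $X_0 \subset L^1$, we may split $f = f_+ - f_-$ and note that $|f|$ is a subsolution in the sense that $S_\LL(t)|f| \ge |S_\LL(t)f| = |f|$ pointwise (positivity of the semigroup), while $M(S_\LL(t)|f|) = M(|f|)$ by mass conservation, forcing $S_\LL(t)|f| = |f|$, i.e. $|f|$ is itself a nonnegative steady state; by the simplicity already established $|f| = c\,G$ with $c > 0$, so $f = G\, e^{i\varphi(x)}$ for some real phase $\varphi$. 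Plugging this into $\LL f = i\xi f$ and separating real and imaginary parts (using $\Delta G + \mathrm{div}(G\mathbf F) = 0$) yields, after a short computation, that $\nabla \varphi \equiv 0$ on the connected set $\R^d$, hence $\varphi$ constant and $\xi = 0$, a contradiction. This rigidity argument — the argument-invariance of a positive eigenfunction under a positivity-preserving mass-conserving flow — is the standard substitute for the Perron--Frobenius aperiodicity statement, and together with the discreteness from the first step it gives $\Sigma_P(\LL) \cap \{\Re e\, z \ge 0\} = \{0\}$.
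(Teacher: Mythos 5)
Your final step (ruling out nonzero imaginary eigenvalues by a rigidity argument) is essentially the same as the paper's Step 4, and your Jordan-block argument via $\langle G,1\rangle\ne 0$ is a fine alternative to the paper's Step 2. But the first two steps of your plan do not go through in this subcritical setting, and this is exactly where the paper had to take a different route.

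You observe yourself that $\Sigma(\BB)$ reaches the imaginary axis because the coercive weight $\langle x\rangle^{\gamma+s-2}$ degenerates at infinity — but you then still assert that $\Sigma(\LL)\cap\{\Re e\,z\ge 0\}$ consists of isolated eigenvalues of finite multiplicity and try to recover $G$ as the strong/operator-norm limit $\Pi_0=\lim_{z\to 0^+} zR_\LL(z)$. This is the step that breaks: with no spectral gap, $0$ need \emph{not} be isolated in $\Sigma(\LL)$ (the paper says so explicitly in the introduction), so the analytic-Fredholm/Voigt power-compactness argument only gives discreteness in the open half-plane $\{\Re e\,z>0\}$, not on the closed one, and the Abelian limit $zR_\LL(z)$ is not guaranteed to converge to a bounded projector. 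Mass conservation controls only the pairing with $1$; it does not by itself control the resolvent in other directions, and the decomposition $X_0 = N(\LL)\oplus\overline{R(\LL)}$ that would make the mean-ergodic limit work is precisely what you have not established. The paper sidesteps all of this by constructing $G$ \emph{directly} as a fixed point of the semigroup: it introduces the equivalent norm $\Nt f\Nt :=\sup_{t>0}\|S_\LL(t)f\|_X$, observes that the convex set ${\Bbb K}=\{f\in X_+ : \Nt f\Nt\le R,\ M(f)=1\}$ is nonempty, weakly compact and $S_\LL(t)$-invariant (by positivity and mass conservation), and invokes the Markov--Kakutani fixed point theorem. Positivity of the resulting $G$ then comes from a strong maximum principle (Proposition~4.3) proved by a barrier/comparison argument, not by asserting that $R_\LL(a)$ is positivity-improving. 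Uniqueness of $N(\LL)=\mathrm{span}(G)$ then goes through Kato's inequality applied to $|f|$ and to $(f-G)^+$, again without any spectral-gap input. In short: your Krein--Rutman framing is the right one in spirit, but the spectral-projector construction of $G$ is exactly what fails here, and the fixed-point construction is the genuine substitute.
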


We start by recalling some elementary properties regarding the positivity of the semigroup $S_{\LL}(t)$.

\begin{prop}\label{prop:positiveSG} The operator $\LL$ and its semigroup $S_\LL$ satisfy the following  properties:\hfil\break
{\bf (a)}  The semigroup $S_\LL$ is positive, namely  $S_\LL(t) f \ge 0$ for any $f \ge 0$ and $t \ge 0$.\hfil\break
{\bf (b)}  The operators $\LL$ and $\LL^*$  satisfy Kato's inequality, that is if $f \in D(\LL)$ is complex valued, then we have 
\begin{equation}\label{eq:KatoIneq-C}
\LL |f| \geq {1 \over |f|} {\Re e}(\overline{f}\, \LL f),
\qquad 
\LL^* |g| \geq {1 \over |g|} {\Re e}(\overline{g}\, \LL^* g)
\qquad
\mbox{in }\, {\mathcal{D}}'({\Bbb R}^d).
\end{equation}
The above inequality means in particular that for all $\psi \in D(\LL^*)\cap X_{+}$ and all $\phi \in D(\LL)\cap X_{+}$, we have
$$
\langle |f|, \LL^* \psi \rangle  \ge {\Re e} \langle |f|^{-1} \,\overline{f}\, \LL f,\psi \rangle, \qquad
\langle \LL\phi, |g|\rangle  \ge {\Re e} \langle \phi,|g|^{-1}\, \overline{g} \, \LL g \rangle.
$$
{\bf (c)} The operator  $-\LL$ satisfies a ``weak maximum principle", namely  for any $a > 0$ and $g \in X_+$, there holds
\beqn\label{eq:PM*}
f \in D(\LL) \mbox{ and }  (- \LL + a) f = g \quad\mbox{imply}\quad f \ge 0. 
\eeqn
{\bf (d)} The opposite of the resolvent operator is a positive operator, namely  for any $a > 0$ and $g \in X_+$, there holds $- R_\LL(a) g \in X_+$. 

\end{prop}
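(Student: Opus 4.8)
The plan is to establish the four properties in the order (a), (b), (d), (c): parts (a) and (b) are proved directly, part (d) is deduced from (a) through the Laplace representation of the resolvent, and part (c) is then immediate from (d).

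\textbf{Part (a).} I would use the splitting $\LL = \AA + \BB$ of Section~\ref{sec:bornesB}. The operator $\BB f = \Delta f + {\bf F}\cdot\nabla f + ({\rm div}\,{\bf F} - M\chi_R)\,f$ is a uniformly elliptic second-order operator with smooth coefficients, so the associated parabolic flow obeys the weak maximum principle and $S_\BB$ is a positive semigroup; since the drift ${\bf F}$ is unbounded, one makes this rigorous by approximating $S_\BB(t)f_0$ by solutions of the parabolic problem on large balls $B_\rho$ — which are nonnegative for $f_0\ge 0$ by the classical maximum principle — and passing to the limit $\rho\to\infty$, the convergence being controlled by the weighted a priori estimates of Section~\ref{sec:bornesB} (in particular the Lyapunov-type bound \eqref{eq:CondChamp3}); equivalently, one checks that $R_\BB(\lambda)$ is a positive operator for $\lambda$ large, by the elliptic maximum principle, and invokes the exponential formula $S_\BB(t)=\lim_n\bigl(\tfrac n t\,R_\BB(\tfrac n t)\bigr)^n$. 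Since $\AA=M\chi_R\,\cdot$ is multiplication by a nonnegative function, hence a positive bounded operator, and the Dyson--Phillips series $S_\LL=\sum_{k\ge 0}S_\BB*(\AA S_\BB)^{(*k)}$ converges in $\BBB(X)$ (because $\AA$ is bounded and $S_\BB$ a strongly continuous semigroup), $S_\LL(t)$ is a norm-convergent sum of positive operators, hence is positive.

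\textbf{Part (b).} I would invoke the classical Kato inequality for the Laplacian, $\Delta|f|\ge |f|^{-1}\,{\Re e}(\bar f\,\Delta f)$ in ${\mathcal D}'(\R^d)$ for $f,\Delta f\in L^1_{\rm loc}$, and then add the lower-order contributions, which hold as exact identities: from $|f|\,\nabla|f|={\Re e}(\bar f\,\nabla f)$ one gets ${\bf F}\cdot\nabla|f|=|f|^{-1}\,{\Re e}(\bar f\,{\bf F}\cdot\nabla f)$ on $\{f\ne 0\}$, while $({\rm div}\,{\bf F})\,|f|=|f|^{-1}\,{\Re e}\bigl(\bar f\,({\rm div}\,{\bf F})\,f\bigr)$ is trivial; summing yields the first inequality of \eqref{eq:KatoIneq-C}, and the same computation for $\LL^*g=\Delta g-{\bf F}\cdot\nabla g$ yields the second. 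The integrated forms follow by pairing these distributional inequalities with $\psi\in D(\LL^*)\cap X_+$, resp.\ $\phi\in D(\LL)\cap X_+$; all pairings are finite because $m_0\ge 1$ gives $L^2(m_0)\subset L^2(m_0^{-1})$, while $\bigl||f|^{-1}\,{\Re e}(\bar f\,\LL f)\bigr|\le|\LL f|\in L^2(m_0)$.

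\textbf{Parts (d) and (c).} By Lemma~\ref{lem:LpmLL}, $S_\LL$ is a bounded $C_0$-semigroup on $X=L^2(m_0)$, hence every $a>0$ lies in $\rho(\LL)$ and $-R_\LL(a)=\int_0^\infty{\rm e}^{-at}\,S_\LL(t)\,dt$ in $\BBB(X)$; by part (a) the integrand is a positive operator for each $t>0$ and the cone $X_+$ is closed, so $-R_\LL(a)g\in X_+$ whenever $g\in X_+$, which is (d). For (c): if $f\in D(\LL)$ satisfies $(-\LL+a)f=g$ with $a>0$ and $g\in X_+$, then since $a\in\rho(\LL)$ we have $f=-R_\LL(a)g$, which is nonnegative by (d).

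\textbf{Main obstacle.} The only genuinely non-formal step is the positivity of $S_\BB$ (equivalently, of $S_\LL$) in part (a): the maximum-principle argument has to accommodate the unbounded, non-gradient drift ${\bf F}$, which requires a careful truncation/approximation scheme together with the weighted (Lyapunov) estimates already available in Section~\ref{sec:bornesB}. Once (a) is secured, parts (b)--(d) are essentially formal, the only minor technicality being to give a meaning to the distributional Kato inequalities when $|f|\notin D(\LL)$, which is handled by the weighted integrability noted above.
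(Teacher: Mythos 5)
Your proof is correct, but it follows a genuinely different logical route from the paper's. The paper first establishes Kato's inequality \textbf{(b)} via the regularization $f_\epsilon := (\epsilon^2+|f|^2)^{1/2}-\epsilon$ (computing $\partial_{jj}f_\epsilon$ explicitly and dropping the nonnegative remainder), and then invokes abstract positive-semigroup theory (citing \cite{AGLMNS} and \cite{Mbook*}) to obtain \textbf{(a)}, \textbf{(c)} and \textbf{(d)} as classical consequences of \textbf{(b)} applied to real-valued functions. You instead prove \textbf{(a)} directly: you argue the positivity of $S_\BB$ by a maximum-principle/truncation argument (or, equivalently, resolvent positivity plus the exponential formula), observe that $\AA=M\chi_R\cdot$ is a positive bounded operator, and conclude via the Dyson--Phillips series $S_\LL=\sum_{k\ge0}S_\BB*(\AA S_\BB)^{(*k)}$, which converges in operator norm because $\AA$ is bounded; then you derive \textbf{(d)} from \textbf{(a)} through the Laplace representation $-R_\LL(a)=\int_0^\infty e^{-at}S_\LL(t)\,dt$, and \textbf{(c)} from \textbf{(d)} immediately. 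Your argument for \textbf{(b)} is the same underlying computation as the paper's (the paper regularizes $\Delta$ directly; you quote the known Kato inequality for $\Delta$ and then add the first- and zero-order terms as exact identities on $\{f\ne0\}$), so there the two proofs coincide in substance. The trade-off is clear: the paper's route is shorter but delegates the implication ``Kato $\Rightarrow$ positivity'' to the literature, whereas your route is more constructive and self-contained (the Dyson--Phillips series makes positivity of $S_\LL$ visible term by term), at the cost of having to justify carefully the positivity of $S_\BB$ in the presence of an unbounded, non-gradient drift --- the obstacle you correctly identify. Both are valid proofs.
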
 

\begin{proof}[Proof of Proposition~\ref{prop:positiveSG}.]   
The argument to establish \eqref{eq:KatoIneq-C} is a classical one, but we outline it briefly.  For a smooth complex valued function $f$ if we set $f_{\epsilon} := (\epsilon^2 + |f|^2)^{1/2} - \epsilon$, then for $1 \leq j \leq d$ we have
\begin{align}
\partial_{j}f_{\epsilon} & = {{\Re e}(\overline{f}\, \partial_{j}f) \over (\epsilon^2 + |f|^2)^{1/2}}, \label{eq:j-epsilon-1}\\
\partial_{jj}f_{\epsilon} &= {{\Re e}(\overline{f}\, \partial_{jj}f) \over (\epsilon^2 + |f|^2)^{1/2}} +
{|\partial_{j}f|^2 \over (\epsilon^2 + |f|^2)^{1/2}} -
{\left({\Re e}(\overline{f}\, \partial_{j}f)\right)^2 \over (\epsilon^2 + |f|^2)^{3/2}}\, \label{eq:j-epsilon-2}.
\end{align}
Observe that $f_{\epsilon} \to |f|$ in $H^1_{\rm loc}({\Bbb R}^d)$ as $\epsilon \to 0$, we have $\partial_{j}|f| = |f|^{-1} {\Re e}(\overline{f}\,\partial_{j}f)$ in $H^1_{\rm loc}({\Bbb R}^d)$. Also  since 
$${|\partial_{j}f|^2 \over (\epsilon^2 + |f|^2)^{1/2}} -
{\left({\Re e}(\overline{f}\, \partial_{j}f)\right)^2 \over (\epsilon^2 + |f|^2)^{3/2}} \geq 0,$$
we conclude that $\partial_{jj}f_{\epsilon} \geq f_{\epsilon}^{-1}\,{\Re e}(\overline{f}\,\partial_{jj}f)$. Passing to the limit in the sense of the distributions we obtain $\partial_{jj}|f| \geq |f|^{-1} \, {\Re e}(\overline{f}\,\partial_{jj}f)$, and using the expressions of the operators $\LL$ and $\LL^*$ we obtain \eqref{eq:KatoIneq-C}.
\medskip

The properties (a), (c) and (d) are classical consequences of 
 \eqref{eq:KatoIneq-C}, applied to real valued functions, see for instance \cite{AGLMNS} or \cite{Mbook*}.
\end{proof} 

\begin{rem}\label{rem:module-f}
For later use, we point out that if $f \in H^2_{\rm loc}({\Bbb R}^d)$ is complex valued and moreover is such that $|f| > 0$ on ${\Bbb R}^d$, then as $\epsilon \to 0$ one may pass to the limit in \eqref{eq:j-epsilon-1} and \eqref{eq:j-epsilon-2} and obtain the following equalities
\begin{align}
\partial_{j}|f| & = {{\Re e}(\overline{f}\, \partial_{j}f) \over |f|}, \label{eq:j-1}\\
\partial_{jj}|f| &= {{\Re e}(\overline{f}\, \partial_{jj}f) \over |f|} +
{|\partial_{j}f|^2 \over |f|} -
{\left({\Re e}(\overline{f}\, \partial_{j}f)\right)^2 \over |f|}\, \label{eq:j-2}.
\end{align}
\end{rem}
   
\begin{prop}\label{prop:PMstrong} The operator $-\LL$ satisfies a ``strong maximum principle", namely for any given real valued function $f \in X \backslash \{0\}$, 
 there holds
 $$
f,|f| \in D(\LL), \quad \LL f = 0 \, \mbox{ and }\,  \LL    |f|  = 0
\quad\mbox{imply} \quad f > 0 \, \mbox{ or }\,  f < 0. 
$$
\end{prop}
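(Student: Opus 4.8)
The plan is to combine Kato's inequality \eqref{eq:KatoIneq-C} with the classical strong maximum principle for second-order elliptic operators. The starting point is the observation that, by Proposition~\ref{prop:positiveSG}(b) applied to the real-valued function $f$, the function $h := |f| - f$ (and similarly $|f| + f$) is a nonnegative function lying in $D(\LL)$ with $\LL h = \LL|f| - \LL f = 0$. Thus $h \ge 0$ satisfies $\LL h = 0$ in the sense of distributions; since the coefficients of $\LL$ are $C^1$ and $h$ is locally in $H^2$ by elliptic regularity, $h$ is in fact a genuine (smooth, or at least $C^2$) solution of the elliptic equation $\Delta h + {\bf F}\cdot\nabla h + ({\rm div}\,{\bf F})\, h = 0$. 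Now the strong maximum principle of Hopf (see e.g.\ Gilbarg--Trudinger) applies: a nonnegative supersolution (here in fact a solution) of a uniformly elliptic operator with bounded-on-compacts coefficients either vanishes identically on a connected component or is strictly positive there. Since ${\Bbb R}^d$ is connected, either $h \equiv 0$ or $h > 0$ everywhere; that is, either $f = |f|$ everywhere or $|f| - f > 0$ everywhere.

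Next I would run the same argument with $h' := |f| + f \ge 0$, which likewise satisfies $\LL h' = 0$, to conclude that either $f = -|f|$ everywhere or $|f| + f > 0$ everywhere. Combining the two dichotomies gives four logical cases, but two of them are incompatible with $f \not\equiv 0$: if $h \equiv 0$ and $h' \equiv 0$ simultaneously then $f \equiv 0$, contradiction; the remaining genuine possibilities are ($f \equiv |f|$ and $|f|+f > 0$ everywhere), which forces $f > 0$ everywhere, or ($f \equiv -|f|$ and $|f|-f > 0$ everywhere), which forces $f < 0$ everywhere. (The fourth case, $|f|-f>0$ and $|f|+f>0$ everywhere, would say $|f| > |f|$, impossible, so in fact one of $h, h'$ must vanish.) This yields exactly the stated conclusion $f > 0$ or $f < 0$.

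The one point requiring a little care — and what I expect to be the main obstacle — is the regularity bootstrap needed to legitimately invoke Hopf's strong maximum principle: one must pass from the distributional identity $\LL h = 0$ with $h \in X = L^2(m_0) \subset L^1_{\rm loc}$ to enough smoothness that the classical strong maximum principle applies. This is handled by standard interior elliptic regularity: $\Delta h = -{\bf F}\cdot\nabla h - ({\rm div}\,{\bf F}) h$ with $C^1$ coefficients bootstraps $h$ from $L^1_{\rm loc}$ to $H^1_{\rm loc}$ to $H^2_{\rm loc}$ and then to $C^{1,\alpha}_{\rm loc}$, which suffices. One could alternatively phrase the whole argument probabilistically or via the positivity-improving property of $S_\LL(t)$ on connected domains, but the elliptic route is the most direct given the hypotheses already in force. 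Note also that the uniform ellipticity constant is $1$ (the Laplacian), and the coefficients ${\bf F}$, ${\rm div}\,{\bf F}$ are locally bounded by \eqref{eq:CondChamp2}, so no issue arises from the subcritical growth at infinity — the maximum principle is a purely local statement. Finally, the harmless sign convention: $\LL h = 0$ means $h$ solves an equation with zeroth-order coefficient ${\rm div}\,{\bf F}$ of unrestricted sign, but Hopf's principle for solutions (as opposed to merely supersolutions) does not require a sign condition on the zeroth-order term, so the argument goes through unchanged.
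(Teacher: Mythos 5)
Your proposal is correct in its main idea but takes a genuinely different route from the paper. You reduce to the classical strong maximum principle of Hopf/De~Giorgi--Nash--Moser applied to the two nonnegative solutions $h := |f|-f$ and $h' := |f|+f$ of $\LL h = \LL h' = 0$; the paper instead constructs an explicit Gaussian barrier $g(x) = c\exp(\sigma r^2 - \sigma |x-x_0|^2)$ that is a supersolution of $\LL - a$ outside a ball where $|f|\ge c$, and then shows $|f|\ge g$ on the exterior by testing the inequality $(\LL - a)(g-|f|)^+\ge 0$ (obtained from Kato's inequality for $s\mapsto s^+$) against $(g-|f|)^+$ in the weighted space $L^2(\Omega,m)$ and invoking the dissipativity of $\LL - a$ in $X$. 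In other words, the paper re-derives the needed comparison principle from the $L^2$-energy structure already established in Section~2, keeping the argument self-contained at the $H^1$ level, whereas you import a substantial external theorem. Both routes are valid and lead to $|f|>0$ everywhere, and your two-dichotomy bookkeeping with $h$ and $h'$ to pin down the sign of $f$ is fine (the paper simply observes that $f$ is continuous and nowhere zero, so has constant sign by connectedness).

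One point you should tighten: under the paper's standing hypothesis ${\bf F}\in C^1$ (so ${\rm div}\,{\bf F}$ is merely continuous), the elliptic bootstrap gives $h\in W^{2,p}_{\rm loc}$ for all $p<\infty$, hence $h\in C^{1,\alpha}_{\rm loc}$, but not $C^2$ in general; your sentence ``smooth, or at least $C^2$'' overstates this, and the later fallback ``$C^{1,\alpha}_{\rm loc}$, which suffices'' is not literally true for the classical (textbook $C^2$) Hopf principle. To close the gap you should cite either the strong maximum principle for strong solutions in $W^{2,p}_{\rm loc}$ with $p\ge d$ (which does follow from the weak Harnack inequality, e.g.\ Gilbarg--Trudinger, Chapter~9) or the De~Giorgi--Nash--Moser weak Harnack inequality directly at the $H^1_{\rm loc}$ level for the divergence-form equation $\partial_i(\partial_i h + hF_i) = 0$. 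Also, your observation that the sign of the zeroth-order coefficient is harmless is correct but deserves one explicit line: since $h\ge 0$ and $\LL h = 0$, one has $(\LL - a)h = -ah \le 0$ for any $a>0$, and for $a$ large enough the zeroth-order coefficient ${\rm div}\,{\bf F} - a$ is locally negative, so the standard hypothesis is restored. Finally, your appeal to ``Proposition~\ref{prop:positiveSG}(b)'' is superfluous here: $\LL h = 0$ follows from linearity alone, since $\LL|f| = 0$ is already a hypothesis of the proposition; Kato's inequality is what the paper uses \emph{before} invoking this proposition (in Steps~3--4 of Theorem~\ref{th:SigmaL}) to upgrade $\LL|f|\ge 0$ to $\LL|f| = 0$.
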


\begin{proof}[Proof of Proposition~\ref{prop:PMstrong}.]  
Consider $f \in X \backslash \{ 0 \} $ such that $f  \in D(\LL)$  and $    \LL f  = 0 $. By a bootstrap regularization argument, we classically have $f \in C(\R^d)$. By assumption, there exist then $x_0 \in \R^d$, and two constants $c,r > 0$ such that $|f|  \ge c$ on  $B(x_0,r)$.  
From Lemma~\ref{lem:SBLpmLp}, we also have that the operator $\LL-aI$ is dissipative for $a$ large enough, in the sense that 
\beqn\label{eq:dissipL-a}
\forall \, f \in D(\LL) \quad ( ( \LL - a)f|f)_X \le - \|  f \|_X^2.
\eeqn
For instance one can take $a := M+1$, where  $M$ is the constant entering in the definition of $\BB$, using the fact that $\BB$ is then dissipative and that the same holds for $\LL - MI$ because $0 \le \AA \le M$. 

We next observe that for $\sigma > 0$ large enough, the function 
$$g(x) := c \exp(\sigma r^2/2 -\sigma |x-x_0|^2/2)$$
 satisfies $g=c$ on $\partial B(x_0,r)$ and 
$$
(- \LL +a) g = (a + d\sigma + \sigma {\bf F} \cdot (x-x_0) - {\rm div}{\bf F} - \sigma^2 |x-x_0|^2) \, g \le 0 \quad\mbox{on}\quad  B(x_0,r)^c. 
$$
 We define $h := (g- |f|)^+$ and $\Omega := \R^d \backslash {\overline B(x_{0},r)}$. We have $h \in H^1_0(\Omega,mdx)$ and 
\bean
 (\LL - a) h &\ge&  \theta'(g- |f|) \, \LL (g- |f|) - a \,  h
 \\
 &=& \theta'(g- |f|) \, [  (\LL - a) g + a |f|] \ge 0,
 \eean
 where we have used the notation $\theta(s) = s^+$. 
Thanks to a  straightforward generalization of \eqref{eq:dissipL-a} to $H^1_0(\Omega,m)$, we deduce
$$
0 \le ( (\LL - a) h|h)_{L^2(\Omega,m)} \le - \|  h \|^2_{L^2(\Omega,m)},
$$
and then $h=0$. This implies that we have $|f| \ge g $ on $\Omega = {\overline B(x_{0},r)}^c$, and thus $|f|  > 0$ on $\R^d$. Since $f  \in C(\R^d)$, we must have either  $f \geq g > 0$ or $f  \leq -g < 0$.  
\end{proof}

\medskip

\noindent
{\bf Proof of Theorem~\ref{th:SigmaL}. }  We split the proof into six steps. 

\medskip
\noindent{\bf Step 1. } We prove that there exists $G \in N(\LL)$ such that $G > 0$, which yields in particular that $0 \in \Sigma_P(\LL)$. In other words, we prove that there exists a positive and normalized (with mass $1$) steady state $G \in X$.  
For the equivalent norm $\Nt \cdot \Nt$ defined on $ X$ by 
$$
\Nt f \Nt := \sup_{t > 0}  \|  S_\LL(t) f \|_X, 
$$
we have $\Nt S_{\LL}(t)f \Nt \leq \Nt f \Nt$ for all $t \geq 0$, that is the semigroup $S_{\LL}$ is a contraction semigroup on $(X,\Nt\cdot\Nt)$.
There exists $R > 0$ large enough such that the intersection of the closed hyperplane $H:= \{f\; ; \; \M(f) = 1\}$ and the closed ball of radius $R$ in $(X,\Nt\cdot\Nt)$ is a convex, non empty subset. Then consider the closed, weakly compact convex set
$${\Bbb K} :=\left\{f \in X_{+} \; ; \; \Nt f\Nt \leq R, \quad \M(f) = 1\right\}.$$
Since $S_{\LL}(t)$ is a linear, weakly continuous, contraction in $(X,\Nt \cdot\Nt)$ and $\M(S_{\LL}(t)f) = \M(f)$ for all $t \geq 0$, we see that ${\Bbb K}$ is stable under the action of the semigroup. Therefore we apply the Markov--Kakutani fixed point theorem (see for instance \cite[Theorem 6, chapter V, \S~10.5, page 456]{Dunford-Scwhartz-1} or its (possibly nonlinear) variant \cite[Theorem 1.2]{EMRR}) and we conclude that there exists $G \in {\Bbb K}$ such that $S_{\LL}(t)G = G$. Therefore we have in particular $G \in D(\LL)$ and $\LL G = 0$. Moreover since $G \geq 0$, $G \not\equiv 0$ and  $G \in C({\Bbb R}^d)$, we may conclude that $G > 0$ on ${\Bbb R}^d$ by the strong maximum principle. 

\medskip\noindent{\bf Step 2. } In this step we prove that $N(\LL^2) = N(\LL)$, which implies that $N(\LL^n) = N(\LL)$ for all $ n \geq 1$. Otherwise, there would exist $g_{1}\in D(\LL) $ with $\|g_{1}\| = 1$ and $g_{2} \in D(\LL)$ such that $\LL g_{1} = 0$ and $\LL g_{2} = g_{1}$. Since $S_{\LL}(t)g_{1} = g_{1}$ for all $t \geq 0$, one sees easily that one must have $S_\LL(t) g_{2} = g_{2} + t g_{1}$. However, since $g_{1} \neq 0$, this is in contradiction with the fact that the semigroup $S_{\LL}$ is bounded on $X$ as stated in Lemma \ref{lem:LpmLL}. 

\medskip
\noindent{\bf Step 3. } We prove here that $N(\LL) = {\rm span}(G)$. 
Since $\LL$ is a real operator, we may restrict ourselves to real valued eigenfunctions. Consider a real valued eigenfunction $f \in N(\LL)$ with $f \not \equiv 0$. 
First we observe that thanks to Kato's inequality we have
\bean
0 =   ( \LL f  ) \, {\rm sgn} (f) 
\le \LL |f|. 
\eean
Actually this inequality must be an equality, since  otherwise we would have 
\bean
0 \not=   \langle \LL |f|,1  \rangle  = \langle  |f| , \LL^*1 \rangle  = 0, 
\eean
which is a contradiction. As a consequence, we have also $|f| \in N(\LL)$, so that the strong maximum principle, Proposition \ref{prop:PMstrong}, implies that either $f > 0$ or $f< 0$, and without loss of generality we may assume that $f > 0$, and up to a multiplication by a normalization factor, we may also assume that $\M(f) = 1$. Now, using once more Kato's inequality we have  
\bean
0 =  {\bf 1}_{[f-G > 0]} \LL (f-G) \,    \le  \LL (f-G)^+ , 
\eean
and due to the same reasons  as above, we may conclude that this last inequality is in fact an equality, that is $(f - G)^+ \in N(\LL)$. The strong maximum principle implies that either $(f-G)^+ \equiv 0$ or $(f-G)^+ > 0$ on ${\Bbb R}^d$. This means that either we have $f \le G$ or $f > G$ on ${\Bbb R}^d$. Thanks to  the normalization hypothesis $\M(f) = \M(G) = 1$, the second possibility must be excluded and thus we have $f \leq G$ on ${\Bbb R}^d$.
Repeating the same argument with $(G-f)^+$ we deduce that $G \le f$ and we conclude that  $f = G$.   

\medskip
\noindent{\bf Step 4. } We prove here that ${\rm i}{\Bbb R}\cap \Sigma_{P}(\LL) = \{0\}$:  that is that the only eigenvalue with vanishing real part is zero, or in other words, \eqref{eq:SigmaPL=0} holds. 
We consider a couple $(f,\mu)$ of eigenfunction and eigenvalue, with $\mu := {\rm i}\omega \in {\rm i}\R$, and normalized so that $\M(|f|) = 1$. Using the complex version of   Kato's inequality \eqref{eq:KatoIneq-C}, we have 
\beqn\label{eq:KatoLLasbolute}
\LL |f| \geq {1 \over |f|}{\Re e}(\overline{f}\, \LL f) = 0.
\eeqn
Computing $\langle \LL |f|, 1 \rangle$, thanks to the above inequality, we get  
$$
0 \le \langle \LL |f|, 1 \rangle = \langle  |f|, \LL^*1 \rangle = 0,
$$
which implies that the inequality is in fact an equality, that is $\LL |f| = 0$, and since $\M(G) = \M(|f|) = 1$, we conclude that $|f| = G > 0$. 

Next, using Remark \ref{rem:module-f} and \eqref{eq:j-1}--\eqref{eq:j-2}, 
 we have
\begin{align*}
|f|\,\LL |f| & = {\Re e}(\overline{f}\,\Delta f ) + 
{\Re e}(\overline{f}\, {\bf F}\cdot \nabla f) + 
{\rm div}({\bf F})|f|^2 + 
|\nabla f|^2 - {1 \over |f|^2}\, \left|{\Re e}(\overline{f}\, \nabla f)\right|^2\\
&= {\Re e}(\overline{f}\,\LL f ) +
|\nabla f|^2 - {1 \over |f|^2}\, \left|{\Re e}\left(\overline{f}\, \nabla f\right)\right|^2.
\end{align*}
Together with  $\LL  |f| = 0$ and ${\Re e}(\overline{f}\, \LL f) = 0$ (since $\LL f = {\rm i}\omega f$), the above identity implies
\begin{equation}\label{eq:f-multiple-G}
|\nabla f|^2 - {1 \over |f|^2}\, \left|{\Re e}(\overline{f}\, \nabla f)\right|^2 = 0.
\end{equation}
From this, as explained below, we infer that $f = \exp({\rm i}\,\theta)\, G$ for some constant $\theta \in [0,2\pi]$, and thus $\LL f = \exp({\rm i}\,\theta)\, \LL G = 0$ and $\omega = 0$. 

Indeed, in order to see that $f = \exp({\rm i}\,\theta)\, G$, for some $\theta\in [0,2\pi]$, let us write 
$$f = u + {\rm i}\,v$$
for two real valued functions $u$ and $v$. Then, since ${\Re e}(\overline{f}\, \nabla f) = u\nabla u + v\nabla v$, relation \eqref{eq:f-multiple-G} means that
$$(u^2 + v^2)\left(|\nabla u|^2 + |\nabla v|^2\right) = \left|u\nabla u + v\nabla v\right|^2,$$
which yields $v\nabla u - u\nabla v = 0$. Since $u$ and $v$ are both continuous functions and are not both identically equal to zero, we may assume for instance that there exists $x_0 \in \R^d$ such that $u(x_0) > 0$. Thus, if we denote by $\Omega$ the connected component of $\{ x \in \R^d; \, u(x) > 0 \}$ containing $x_0$, we have  $\nabla (v/u) = 0$ on $\Omega$. Hence  $v = \alpha\, u$ on $\Omega$ for some $\alpha \in {\Bbb R}$. However we must have $\Omega = \R^d$, since otherwise we would have $|f| = 0$ on $\partial\Omega$, which would be a contradiction with the fact that $G = |f| > 0$ in ${\Bbb R}^d$. Thus, setting $a := 1 + {\rm i}\alpha$ we get  $f = a\, u$ and thus $u = |u| = |f|/|a|$ is a positive steady state, so that $|u| = G/|a| > 0$ from Step~3. We conclue that $f = a\, G/|a| = \exp({\rm i}\,\theta) G$ for some $\theta \in [0,2\pi]$.

\medskip
\noindent{\bf Step 5. } Denoting $m := {\rm e}^{\kappa \langle x \rangle^\gamma}$, with $\kappa \in (0,1/\gamma)$, and $\alpha := \kappa \gamma (1- \kappa \gamma)/2 > 0$, the function $\psi^0_{m,1}$ 
being defined by \eqref{eq:Def-psi-mp} for $p=1$, $M,R > 0$ large enough so that 
$$
m \psi^0_{m,1} = \LL^* m   - M \chi_R m = \Delta m - {\bf F} \cdot \nabla m - M \chi_R m \le - \alpha m , 
$$
with $\alpha > 0$, we have 
$$
0 = \int (\LL G) \, m =  \int G \, (\LL^* m) = \int G \, (m \, \psi^0_{m,1} + M \chi_R m), 
$$
and  therefore
$$
 \int G \, m \le {1 \over \alpha} \int G \, M \chi_R m \le {M \over \alpha} \, \M(G). 
$$
We have proved $G \in L^1(m)$. 
Using the regularization property of $S_\BB$ established in Lemma~\ref{lem:Ultracon-2} and 
Lemma~\ref{lem:SBLpLq} as well as the same Duhamel formula as in Step~1 in the proof of Lemma~\ref{lem:LpmLL}, we deduce $G \in L^\infty(m)$.  

\medskip
\noindent{\bf Step 6. } We define $\kappa^*_1 := \gamma^{-1} \limsup_{|x| \to \infty} x \cdot {\bf F}(x) / |x|^\gamma$ and we know that $\kappa^* \in [1/\gamma,\infty)$ from assumptions \eqref{eq:CondChamp} and \eqref{eq:CondChamp2}. We then denote $g := c \, {\rm e}^{\kappa | x |^\gamma}$, with $\kappa > \kappa^*_1$, and we compute 
\bean
g^{-1} [(- \LL +a) g] &=& - \kappa^2 \gamma^2 |x|^{2(\gamma-1)} + \kappa \gamma {\bf F} \cdot x \, |x|^{\gamma-2} 
\\
&&+ a  - {\rm div}{\bf F} + \kappa\gamma (d+\gamma-2)|x|^{\gamma-2}  \le 0 \quad\mbox{on}\quad  B(0,R)^c, 
\eean
for some $R > 0$ large enough. Arguing as in the proof of Proposition~\ref{prop:PMstrong}, we deduce that $G \ge g$ on $B(0,R)^c$, and this yields the claimed lower bound (see also the proof of Lemma~\ref{lem:HarrisHyp1} below for similar aguments). \qed 

\section{The general case - decay estimate}  
\label{sec:NonPoincareSemiGroup}
\setcounter{equation}{0}
\setcounter{theo}{0}

In this section we present the proof of the our main Theorem~\ref{th:MAIN} in the general case, that is assuming that the force field satisfies conditions \eqref{eq:CondChamp}--\eqref{eq:CondChamp2-b}. We denote by $X_0$ the ``small" Banach space 
$$
X_0 := L^p(m_{0}), \quad m_{0} := \exp(\kappa \, \langle x \rangle^\gamma), \quad \kappa \in (0,\kappa^*_0), \quad p = 1 \mbox{ or } 2, 
$$
and we will establish a semigroup decay in that  space using three different strategies. 
The proof of the decay of the semigroup in a general Banach space $L^p(m)$, with a weight function $m$ which is a polynomial or an exponential function, then follows the same arguments as the ones which have been developed in section~\ref{subsec:rateLargeSpace} and
may be skipped here. 

\subsection{Modified Poincar\'e inequality approach} 
\label{subsec:modifPI}
In this section we fix the weight function $m=m_0$ as defined above.  We recall the so-called {\it Lyapunov condition} 
\beqn\label{eq:LyapunovPI}
\LL^*m_0 \le -  \xi_0+ M {\bf 1}_{B_{R_0}}, \qquad \xi_0 (x) :=  \zeta_0\, m_0(x) \langle x \rangle^{2(\gamma-1)},
\eeqn
which holds for some appropriate constants $\zeta_{0}, M, R_0 \in (0,\infty)$ and which has been established in Step~1 in the proof of Lemma~\ref{lem:SBLpmLp}.  We also recall that the stationary state $G$ given by Theorem~\eqref{th:SigmaL} satisfies estimate \eqref{eq:EstimGLinfty}, and thus a  local Poincar\'e (or Poincar\'e-Wirtinger) inequality on any ball with constants which may be estimated explicitely.  Under both above assumptions, we may adapt the proof of \cite[Theorem~3.6]{MR2381160} by Bakry, Cattiaux and Guillin in order to get the following {\it ``weak  Lyapunov-Poincar\'e  inequality''}  which is a generalisation of
the  {\it  weak weighted Poincaré-Wirtinger inequality}  \eqref{eq:wPineq} established in Lemma~\ref{lem:wwPineq:case1}. 
 
\begin{lem} 
\label{lem:BCGpoincare}  There exists a constant $\beta  \in (0,\infty)$ such that denoting $w_0 := 1 + \beta m_0$ and recalling that the function $\xi_{0}$ has been defined  by \eqref{eq:LyapunovPI}, the following inequality
\beqn\label{chMarkov:secHypo:WLPineq}
\frac14  \int h^2 \xi_0 G \le  \int \Bigl( w_0 |\nabla h|^2 - \frac12 h^2 \LL^* w_0 \Bigr)G, 
\eeqn
holds for all functions $h \in C^1_{b}({\Bbb R}^d)$ such that $\M( h G)= 0$.
\end{lem}

\begin{proof}[Proof of Lemma~\ref{lem:BCGpoincare}.]  
We take  $h \in C^1_{b}({\Bbb R}^d)$ such that $\M( h G)= 0$. From \eqref{eq:LyapunovPI}, we have
\bean
\int h^2  \xi_0 G   \le  \int h^2 \, \bigl( M \, {\bf 1}_{B_{R_0}} -  \LL^* m_0 \bigr) \, G.
\eean
For any $R \ge R_0$, the local Poincar\'e inequality writes 
\bean
\int h^2 \, {\bf 1}_{B_R} \, G 
&\le& \kappa_{R} \int_{B_R} |\nabla h|^2 \, G + {1 \over \M(G {\bf 1}_{B_R})}  \Bigl( \int_{B_R^c} h \, G \Bigr)^2
\\
&\le& {\kappa_{R} \over 1 + \beta}\int |\nabla h|^2 \, w_0 \, G +  {\M( \xi_0^{-1}G {\bf 1}_{B_R^c})  \over  \M(G {\bf 1}_{B_R})}  \int  h^2 \, \xi_0 G  ,
\eean
where we have used the the zero mass condition $\M( h G)= 0$ in the first line and the Cauchy-Schwarz inequality  in the second line. We then deduce 
\bean
\Bigl( 1 - M {\M( \xi_0^{-1}G {\bf 1}_{B_R^c})  \over  \M(G {\bf 1}_{B_R})} \Bigr) \int h^2  \xi_0 G   \le  {M \kappa_R \over 1 + \beta}\int |\nabla h|^2 \, w_0 \, G + \int h^2 \, ( -  \LL^* w_0 \bigr) \, G.
\eean
We may then first fix $R > R_0$  large enough and next $\beta > 0$   large enough in such a way that \eqref{chMarkov:secHypo:WLPineq} holds.  
\end{proof}

We introduce the weight function  $m := w^{1/2}_{0} G^{-1/2}$ and the associated Hilbert space $L^2(m)$.  
Despite the fact that in general the operator $\LL$ is not symmetric in $L^2(m)$, the associated
Dirichlet form has a nice positivity property. 
More precisely, in order to make the discussion simpler,  we shall consider the conjugate operator $h \mapsto Lh := G^{-1}\LL (Gh)$, which amounts to make the change of function $h := G^{-1}f$, and thus we define
$$
L h := G^{-1} \, \LL (G h) = 
 \Delta h + \Bigl( 2 {\nabla G \over G} + {\bf F} \Bigr) \cdot \nabla h. 
$$
For any smooth function $f$, defining the quadratic form $\EE(f) := (-\LL f|f)_{L^2(m)}$, we compute, upon integrating by parts 
\bean
\EE(f) 
&:=& ( -\LL f|f )_{L^2(m)} =  ( - L h|h )_{L^2( w_0^{1/2} G^{1/2})}
\\
&=& -  \int \left[ \Delta h + \Bigl( 2 {\nabla G \over G} + {\bf F} \Bigr) \cdot \nabla h   \right]   h w_0 G
\\
&=& \int |\nabla h|^2 G w_0 - \frac12\int \nabla (h^2)   \bigl[- G \nabla w_0 + w_0 \nabla G +   {\bf F}  w_0 G \bigr]
\\
&=&  \int |\nabla h|^2 G w_0  + \frac12\int  h^2   \bigl[-G \Delta w_0 + w_0 \Delta G +  {\rm div} ( w_0 G{\bf F}   ) \bigr]
\\
&=& \int |\nabla h|^2 G w_0  - \frac12\int  h^2 \,G \LL^*w_0 ,
\eean
where we have used the fact that $G$ is a steady state in the last line. 
For any smooth function $f$ such that $\M(f) = 0$, we deduce from the weak  Lyapunov-Poincar\'e inequality \eqref{chMarkov:secHypo:WLPineq} that 
\bean
\EE(f) 
\ge \frac14 \int f^2 \xi_0 G^{-1}. 
\eean
As a consequence, denoting $f(t) := S_\LL(t) f_0$ for $f_0 \in X_0$ with $\M(f_0) = 0$, we compute 
$$
{d \over dt} \int f^2 w_0 G^{-1} = - \EE(f)  \le - \frac14 {\zeta_0 \over 1+\beta}  \int f^2  \langle x \rangle^{2(\gamma-1)} w_0 G^{-1}. 
$$
This differential inequality is similar to \eqref{eq:small1} and we thus may conclude as in Section~\ref{sec:Poincare} that 
$$
\| f_t \|_{L^2} \lesssim \Theta_{G^{-1}}(t) \| f_0 \|_{L^2(m)}, \quad \forall \, t \ge 0. 
$$

\subsection{Harris-Meyn-Tweedie approach} 
\label{subsec:HarrisMeynTweedie}
We present now a second way to get a decay estimate in one small space. We start establishing a strict positivity estimate which will be the main 
technical estimate in our analysis.

\begin{lem}\label{lem:HarrisHyp1} For any $R > 0$, there exist $T > 0$ and  $\psi \in L^\infty(\R^d)$, $\psi \ge 0$, $\psi \not\equiv 0$,  such that 
$$
S_\LL(T) g_0 \ge \psi \int_{B_R} g_0 \, dx, \quad \forall \, g_0 \ge 0.
$$
\end{lem}

\begin{proof}[Proof of Lemma~\ref{lem:HarrisHyp1}.]  
We split the proof into three parts. 

\noindent
{\bf Step 1. }  
We prove that for any $p \in [1,\infty)$ and $m := \langle x \rangle^k$, $k > 0$ large enough, there exist $C, \alpha > 1$, such that for any $g_0 \in L^1(m)$, the 
function $g(t) := S_\LL(t) g_0$ satisfies
\beqn\label{eq:SBsmoothL1W1p}
\| \nabla  g(t) \|_{L^p} \le {C\over t^\alpha} \| g_0 \|_{L^1(m)}, \quad \forall \, t > 0.
\eeqn
First, as a consequence of Lemma~\ref{lem:SBLpLq} and the iterated Duhamel formula 
$$
S_\LL = S_\BB + \dots + (S_\BB \AA )^{(*(n-1))} * S_\BB + (S_\BB \AA )^{(*n)} * S_\LL, 
$$
we have that $S_{\LL}$ is ultra-contractive, that is
$$
\| g(t) \|_{L^\infty} \le {C\over t^{d/2}} \| g_0 \|_{L^1(m)}, \quad \forall \, t > 0.
$$
Next, we differentiate the Fokker-Planck equation and proceeding in a similar way as in the proof of Lemma~\ref{lem:SBL2H1}, we get
$$
\| g(t) \|_{H^1} \le {C\over t^{1/2}} \| g_0 \|_{L^2(m)}, \quad \forall \, t > 0.
$$
Repeating the proof of Lemma~\ref{lem:SBLpLq} for the derivative of $g(t)$, we may conclude in a quite standard way to \eqref{eq:SBsmoothL1W1p}.

\noindent
{\bf Step 2. }  
We prove that for any $t > 0, R > 0$, there exist $r,\lambda > 0$ and $x_0 \in B_R$ such that  
$$
S_\LL(t) g_0 \ge  \lambda \, {\bf 1}_{B(x_0,r)} \int_{B_R} g_0 \, dx. 
$$
We fix $R > 0$ and $g_0 \in L^1(\R^d)$ with ${\rm supp} \, g \subset B_R$. With the same notations as in the previous step, we have 
$$
\int g(t,x) \, dx = \int g_0(x) \, dx = \int_{B_R} g_0 \, dx.
$$
Moreover, thanks to Lemma~\ref{lem:LpmLL}, there exists $A  > 0$, such that 
$$
\int g(t,x) \, |x| \, dx \le A \int g_0 \, |x| \, dx \le AR \int_{B_R} g_0 \, dx.
$$
For any $\rho > 0$, , we write 
\bean
\int_{B_\rho} g(t,x)dx 
&=& \int g(t,x)dx - \int_{B_\rho^c} g(t,x) dx
\\
&\ge& \int g_0 dx - {1 \over \rho} \int g(t,x)  |x|dx
\\
&\ge & \Bigl( 1 - {AR \over \rho} \Bigr) \int_{B_R}  g_0 (x)dx  \ge \frac12  \int_{B_R}  g_0(x)dx, 
\eean
by choosing $\rho > 0$ large enough.  As a consequence we infer that there exists $x_0(t) \in B_\rho$ such that 
$$
g(t,x_0(t)) \ge  2\lambda := {1 \over 2|B_\rho|} \int_{B_\rho} g(t,x)dx \, dx \ge \frac1{4 |B_\rho|}  \int_{B_R}  g_0(x)dx.
$$
Thanks to \eqref{eq:SBsmoothL1W1p} and Morrey embedding $W^{1,p}({\Bbb R}^d) \subset C^{0,1/2}({\Bbb R}^d)$, which holds true for $p \in (1,\infty)$ large enough,
we know that there exists a constant $c_{0} > 0$ such that for all $t > 0$ and $x,y\in {\Bbb R}^d$
$$|g(t,x) - g(t,y)| \leq c_{0}\|\nabla g(t,\cdot)\|_{L^p}\cdot |x-y|^{1/2} \leq c_{0}C\|g_{0}\|_{L^1}\, t^{-\alpha}|x-y|^{1/2}.$$
Therefore, we have 
$$
g(t,x) \ge g(t,x_0(t)) - c_{0}C\|g_{0}\|_{L^1} t^{-\alpha} |x-x_0(t)|^{1/2} \ge \lambda
$$
for $x \in B(x_0(t),r)$, where $r$ is chosen by setting $c_{0}C\|g_{0}\|_{L^1}t^{-\alpha} r^{1/2} = \lambda$. 

\medskip\noindent
{\bf Step 3. } (Spreading of the positivity.)  We prove that that for any $r_0,r_1 >0$ and $x_0 \in \R^d$, there exist $t_1,\kappa_1 > 0$ (computable) such that  
$$
g_0 \ge {\bf 1}_{B(x_0,r_0)}  \quad\Rightarrow\quad g(t_1,\cdot) \ge \kappa_1  {\bf 1}_{B(x_0,r_1)}. 
$$
We first observe that $h(t,x) := g(t,x) \, {\rm e}^{t \| {\rm div} {\bf F} \|_\infty}$ is a super solution of the Fokker-Planck equation with initial data $g_{0}$, that is $\partial_{t}h - \LL h \geq 0$ and
$$ \partial_t h  \ge \LL^\sharp h := \Delta h + {\bf F} \cdot \nabla h. $$
We introduce the function 
$$
\varphi (t,x) := \varphi_0 - \eps, \quad \varphi_0 := (t+t_0)^{-\alpha} {\rm e}^{-\mu |x-x_0|^2/(t+t_0)}, 
$$
for some parameters $\alpha,\mu,\eps, t_0 > 0$ to be specified latter. We compute 
\bean
&&\partial_t \varphi = \bigl( \mu {|x-x_0|^2 \over (t+t_0)^2} -  {\alpha \over  t+t_0} \bigr) \varphi_0, 
\quad\nabla_x \varphi = - 2 \mu {x-x_0 \over t+t_0} \varphi_0, 
\\
&&
\Delta_x \varphi = \bigl(  4 \mu^2 {|x-x_0|^2 \over (t+t_0)^2}  - 2  {\mu d\over t+t_0} \bigr)\varphi_0. 
\eean
To simplify notations, we only consider the case $x_0 = 0$ (the general case
can be dealt exactly in the same way) and we denote $r = |x|$ and $\tau = t + t_0$, and we consider $\phi$ as function of $\tau$ and $r$. 
For $\mu \ge 1$, $T > 0$ and $t \in (0,T)$, we deduce
\bean
{\partial_t \varphi -\LL^\sharp \varphi\over \varphi_0} 
&=&  \mu {r^2 \over \tau^2} -   {\alpha \over \tau} +  2d \,  {\mu \over \tau} - 4 \mu^2 {r^2 \over \tau^2} + 2 \mu {{\bf F} \cdot x\over\tau}  
\\
&\le&  -   {\alpha \over \tau} - 2  \mu^2 {r^2 \over \tau^2}  +  2d \,  {\mu \over \tau}   + 4 \| {\bf F} \|_{L^\infty}, 
\eean
and then 
\beqn\label{eq:dtLsharp}
\partial_t \varphi \le \LL^\sharp \varphi  \quad\mbox{on}\quad (0, T) \times \R^d  
\eeqn
for any $\alpha \ge 2d\mu + 4 T  \| {\bf F} \|_{L^\infty}$. We fix $\eps$ such that 
$$
\varphi(0,r_0) =  t_0^{-\alpha} {\rm e}^{-\mu r_0^2/ t_0} - \eps = 0.
$$
We have built $\varphi$ such that 
$$
\partial_t (h-\varphi)  \ge \LL^\sharp (h-\varphi) \,\,\,  \mbox{on}\,\,\,  (0, T) \times \R^d , 
\quad  (h-\varphi)  (0) \ge 0 \,\,\,  \mbox{on}\,\,\,  \R^d .
$$
Because $(h- \varphi)_- \in L^1((0,T) \times \R^d)$, by the weak maximum principle we deduce that $h \geq \phi$ on $(0,T)\times {\Bbb R}^d$, so that
$$
g(t,x) \ge {\rm e}^{-t \| {\rm div} {\bf F} \|_{L^\infty}} \, \varphi(t,x) \,\,\,  \mbox{on}\,\,\,  (0, T) \times \R^d .
$$
We now define $r_1 := r_0 t_0^{-\beta} \tau^\beta $ as well as  
$$
\psi(t) := \varphi(t,r_1) =  \tau^{-\alpha} {\rm e}^{-\mu r_0^2  t_0^{-2\beta}\tau^{2\beta-1}} - t_0^{-\alpha} {\rm e}^{-\mu r_0^2/ t_0}.
$$
We have   (recall that $\tau = t + t_{0}$)
\beqn\label{eq:psi0}
\psi(0) = 0, \qquad \psi'(t) = \bigl( {\mu r_0^2 t_0^{-2\beta} (1-2\beta) \over \tau^{1-2\beta}} - \alpha \bigr) \tau^{-\alpha-1} {\rm e}^{-\mu r_0^2 \tau^{2\beta-1}} \ge 0
\eeqn
whenever $\alpha \le {\mu r_0^2 t_0^{-2\beta}(1-2\beta) \over \tau^{1-2\beta}}$. We choose $\mu=1+r_0^{-2}$, $T=2t_0 \le 1$ to be fixed below, $\beta = 1/4$
and $\alpha := 2d  + 4   \| {\bf F} \|_{L^\infty} \ge 2$. We observe that \eqref{eq:dtLsharp} is true on the time interval $(0,T)$ and \eqref{eq:psi0} is true on $(0,2t_0)$ by 
choosing $t_0 := 1/(4 \alpha) \le 1/2 $. In particular, $\psi(2t_0) \ge 0$. As a consequence, 
$$
\varphi(2t_0,x) \ge \bar\varphi_1 := \varphi(2t_0,r_{t_0} ) =  (3t_0)^{-\alpha} {\rm e}^{-\mu r_0^2 2^{-1/2} / t_0  } - t_0^{-\alpha} {\rm e}^{-\mu r_0^2/ t_0} > 0,
$$
for any $|x| < r_{t_0} := r_0 2^{1/4}$ and then 
$$
g(2t_0,x) \ge  \bar\varphi_1 {\rm e}^{-2t_0 \| {\rm div} {\bf F} \|_{L^\infty}} \, {\bf 1}_{B(x_0,r_0 2^{1/4})}, 
$$
by the above weak maximum principle. Iterating this argument, we get the announced estimate with 
$$
t_1 := 2 n t_0, \ n:= \bigl( 1 + \bigl( 4 {\log r_1/r_0 \over \log 2} \bigr] \bigr), 
\ \kappa_1 := {\rm e}^{-2t_1 \| {\rm div} {\bf F} \|_{L^\infty}} \Pi_{k=1}^n \bar\varphi_k  
$$
and 
$$
\bar\varphi_k  :=  (3t_0)^{-\alpha} {\rm e}^{-\mu \rho_k^2 2^{-1/2} / t_0  } - t_0^{-\alpha} {\rm e}^{-\mu \rho_k^2/ t_0}, \quad \rho_k := r_0 2^{k/4}.
$$

\medskip\noindent
{\bf Step 4 and conclusion. } From Step 2, we know that we may find $\tau_0, r_0,R,\lambda > 0$ and $x_0 \in B(0,R)$ such that  
$$
g(\tau_0, \cdot) \ge  c_0 \, {\bf 1}_{B(x_0,r_0)}, \quad c_0 := \lambda   \int_{B_R} g_0 \, dx. 
$$
Using Step 3 with $r_1 := 2R$, we obtain 
$$
g(\tau_0 + t_1, \cdot) \ge  \lambda \kappa_1 \, {\bf 1}_{B(0,R)} \int_{B_R} g_0 \, dx ,
$$
%
which is nothing but the announced Harris condition. 
\end{proof}

\medskip
We recall the following so called {\it ``abstract  subgeometric Harris-Meyn-Tweedie theorem''}. 

\begin{theo}[subgeometric Harris-Meyn-Tweedie] \label{th:DFGharris} 
Consider a semigroup $S$ in $L^1(m_0)$ which is positive and mass conservative. 
Assume furthermore that the following two additional conditions hold:\hfil\break
(1) the Lyapunov condition 
$$
\LL^* m \le - \varphi(m) + b, 
$$
for some weight function $m : \R^d \to [1,\infty)$ converging to infinity and  some positive, concave and converging to infinity function $\varphi : \R+ \to \R_+$ and some constant $b > 0$;  (2) the strict positivity (or irreducibility)  condition
$$
S_T f \ge \nu \int_{B_R} f, \quad \forall \, f \ge 0, 
$$
for any $R > 0$ and some $T > 0$ and $\nu \ge 0$, $\nu \not\equiv 0$. 
Then, there exists some constant $C > 0$ such that 
\beqn\label{eq:AbstractHarris}
\| S_t f_0   \|_{L^1} \le  {C \over (\varphi \circ H_\varphi^{-1})(t)} \| f _0   \|_{L^1(m)},
\eeqn
for any $f_0 \in L^1(m)$, $\M(f_0) = 0$, where $H_\varphi$ is the function defined by 
$$
H_\varphi(u) := \int_1^u{ds \over \varphi(s)}. 
$$
\end{theo}

Theorem~\ref{th:DFGharris} is stated in \cite[Theorem~1.2]{MR2381160} and it is proved in \cite[(3.5) in Theorem~3.2]{MR2499863} (see also \cite[Section~5.1]{MR2499863}) by the mean of probabilistic tools. A simple semigroup proof is presented in \cite{CanizoMischler}. A variant of  \cite[Theorem~1.2]{MR2381160} is also presented in \cite[Section~4]{Hairer}. 
Gathering the Lyapunov condition \eqref{eq:LyapunovPI}, the strict positivity property established in Lemma~\ref{lem:HarrisHyp1} and the subgeometric Harris-Meyn-Tweedie Theorem~\ref{th:DFGharris}, we deduce the following decay estimate.

\begin{cor}\label{cor:WeakHarris} For any $f_0 \in L^1(m_0)$ such that $\M(f_0) = 0$, 
the associated solution $  f(t, \cdot) = S_\LL(t) f_0$  to the Fokker-Planck equation \eqref{eq:FPeq} satisfies
\beqn\label{eq:EstimL1Harris}
\|f(t,.)  \|_{L^1 } \lesssim \Theta_{m_0}(t) \, \|f_0   \|_{L^1(m_0)},
\eeqn
which is nothing but the conclusion of Theorem~\ref{th:MAIN} in $L^1(m_0)$. 
\end{cor}

\noindent
{\sl Proof of Corollary~\ref{cor:WeakHarris}. }
We write \eqref{eq:CondChamp3} (or \eqref{eq:LyapunovPI}) as 
\beqn\label{eq:LyapunovHarris}
\LL^*m_0 \le - \varphi(m_0)  +  M,
\eeqn
with 
$$
\varphi(y) := C \, { y \over (\log y)^\alpha}, \quad \alpha := 2 {1-\gamma \over \gamma}.
$$
We easily compute 
$$
H_\varphi(u) = C \int_1^u (\log y)^\alpha {dy \over y} = C \int_1^{\log u} z^\alpha dz  = C (\log u)^{\alpha+1},
$$
so that $H^{-1}_\varphi(v) = \exp (c v^{{1 \over \alpha+1}})$. From \eqref{eq:AbstractHarris}, we conclude \eqref{eq:EstimL1Harris} with $\Theta_{m_0}(t) := K \exp(-\lambda t^{{\gamma \over 2-\gamma}})$, $K \ge 1$, $\lambda > 0$.  
 \qed

\subsection{Krein-Rutman type approach} 
\label{subsec:KRapproach}
We finally present a third approach which is mainly an adaptation of an argument used in the proof of  \cite[Theorem 2.1]{MS*}.
The key argument is an accurate estimate on the confinement process of the semigroup and it does not deeply use the positivity estimates. The drawback comes  from the fact that the rate of decay in the small space $X_0$ is lower than in the small space $L^2(G^{-1/2})$ (see section~\ref{subsec:rateLargeSpace} for the notations). 

 We consider the  sequence of spaces $(X_k)_{k\in \N}$, as defined in \eqref{eq:defXk}, and $X_\infty := L^2(m_0^{1/2})$. 
 We observe that  $X_k \subset X_{k+1} \subset X_\infty$ for any $k \in \N$. For $0 \leq \eta \leq 1$ we also denote 
 $X_{k,\eta}$ the space defined by Hilbertian interpolation between $X_{k,0} = X_k$ and $X_{k,1} := \{  f \in X_k; \,\, \LL f \in X_k \}$, that is, with the notations of L. Tartar \cite[Chapter 22, page 109]{Tartar-Interpol},
$$X_{k,\eta} := \left(X_{k},X_{k,1}\right)_{\eta,2}.$$
    
\begin{lem}\label{lem:ARBk}  Let us fix an integer $j >  2(1-\gamma)/\gamma > 0$. There exists a constant $C$ such that for any $\ell_1,\ell_2, k \in \N$, $k \ge  j $, $\ell_i \ge 1$, 
we have for all $z \in {\Bbb C}$ with $ {\Re e}\, z >0$  
\begin{align} 
\label{lem:RBXk}
& \|R_\BB(z)   \|_{ X_{k-j} \to X_k } \le  \CC_k := C \, k^{j}, 
\\
& \|\AA R_\BB(z)^{\ell_1}\AA R_\BB(z)^{\ell_2}  \|_{ X_0\to X_0 } \le (\ell_1! \ell_2!)^j C^{\ell_1+\ell_2}/ \langle y \rangle^{1/2}, \label{lem:ARBk-2} 
\end{align}
where $z = x + {\rm i}y$, $x,y \in \R$. 
\end{lem}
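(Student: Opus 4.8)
The plan is to derive resolvent bounds from the semigroup growth estimates of Lemma~\ref{lem:SBL2weight} via the Laplace transform, and then assemble the compound estimate \eqref{lem:ARBk-2} using the ultracontractivity of $S_\BB$ together with a decay-in-$y$ mechanism coming from differentiating the resolvent identity with respect to the imaginary part of $z$. First I would recall that for $\Re e\, z > 0$ one has $R_\BB(z) = -\int_0^\infty {\rm e}^{-zt} S_\BB(t)\, dt$, so that $\|R_\BB(z)\|_{X_{k-j}\to X_k} \le \int_0^\infty {\rm e}^{-(\Re e\, z)\, t}\, \|S_\BB(t)\|_{X_{k-j}\to X_k}\, dt \le \int_0^\infty \|S_\BB(t)\|_{X_{k-j}\to X_k}\, dt$. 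Plugging in \eqref{eq:SBXk-jXk} with the choice of $j$ made in the statement, the first term ${\rm e}^{-\lambda\langle t^\alpha\rangle^{2(\gamma-1)}t}$ is integrable on $(0,\infty)$ (its exponent behaves like $-\lambda\, t^{1-2\alpha(1-\gamma)}$, and since $\alpha < \alpha^* = 1/(2(1-\gamma))$ the exponent $1 - 2\alpha(1-\gamma)$ is a positive power of $t$), while the second term $1\wedge k^j/(\kappa^j\langle t^\alpha\rangle^{\gamma j})$ is integrable precisely when $\alpha\gamma j > 1$; since $j > 2(1-\gamma)/\gamma$ we may pick $\alpha$ close enough to $\alpha^*$ that $\alpha\gamma j > 1$, and the resulting integral is bounded by $C\, k^j$ after evaluating $\int_0^\infty (1\wedge (k/(\kappa\langle t^\alpha\rangle^\gamma))^j)\, dt$, which scales like $k^j$ times a constant depending only on $\alpha,\gamma,\kappa,j$. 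This gives \eqref{lem:RBXk} with $\CC_k := C\, k^j$.

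For \eqref{lem:ARBk-2}, the strategy is two-fold. First, to control the power $R_\BB(z)^{\ell}$ one uses $R_\BB(z)^\ell = \frac{(-1)^\ell}{(\ell-1)!}\int_0^\infty t^{\ell-1}{\rm e}^{-zt}S_\BB(t)\, dt$; combined with the telescoping of weights \eqref{eq:mkmkj} — writing the map $X_0 \to X_0$ as a composition through the scale $X_0 \subset X_j \subset X_{2j}\subset\cdots$, using $\ell$ factors of $R_\BB$ to climb $\ell$ steps when $\ell \le $ something, or more simply iterating \eqref{lem:RBXk} — each application of a single resolvent from $X_{k-j}$ into $X_k$ costs a factor $C\, k^j$, and iterating $\ell$ times up the scale yields a product $\prod C\, (mj)^j$ over $m = 1,\dots,\ell$, which is bounded by $(\ell!)^j C^\ell$ (up to adjusting $C$). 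The operator $\AA = M\chi_R$ is a bounded multiplication by a compactly supported function, so it maps any $X_k$ boundedly into $X_0$ (indeed into every $X_{k'}$) with norm $\le M$, absorbing the weight loss for free; thus $\|\AA R_\BB(z)^{\ell_1}\AA R_\BB(z)^{\ell_2}\|_{X_0\to X_0} \le (\ell_1!\,\ell_2!)^j\, C^{\ell_1+\ell_2}$, which is the estimate without the $\langle y\rangle^{-1/2}$ gain.

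To recover the decay $\langle y\rangle^{-1/2}$ in the frequency variable, I would exploit the regularizing estimate \eqref{eq:SBL2H1}: $S_\BB(t)$ maps $L^2(m_0)$ into $H^1$ with norm $\lesssim t^{-1/2}{\rm e}^{-\lambda t^{\sigma^*_\BB}}$, so $R_\BB(z)$ maps $X_0$ into $H^1$ (locally), and $\AA R_\BB(z)$ has its range in $H^1$-functions supported in a fixed ball $B_{2R}$. On such functions one has the gain $\|\AA R_\BB(z)\, g\|_{L^2} \lesssim \langle y\rangle^{-1/2}\|g\|_{X_0}$, obtained from the interpolation space $X_{0,1/2}$: write $\AA R_\BB(z) = \AA R_\BB(z)(I - \BB/z)\cdot(\text{something})$, or more directly use that $z R_\BB(z) = I + \BB R_\BB(z)$ so that $R_\BB(z)$ gains half a derivative relative to $|z|$, and the compact support of $\AA$ converts the localized $H^1$ bound into an $L^2\to L^2$ bound with a factor $|z|^{-1/2}\lesssim \langle y\rangle^{-1/2}$ when $\Re e\, z > 0$ and $|y|\ge 1$; for $|y|\le 1$ the bound $\langle y\rangle^{-1/2}\asymp 1$ is already covered by the previous paragraph. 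The main obstacle — and the step requiring the most care — is exactly this last point: making the $\langle y\rangle^{-1/2}$ gain rigorous and compatible with the weighted scale, i.e. checking that $\|\AA R_\BB(z)\|_{X_0 \to X_0}\lesssim \langle y\rangle^{-1/2}$ uniformly in $\Re e\, z > 0$, which needs the $H^1$-regularization of Lemma~\ref{lem:SBL2H1} together with a careful interpolation argument (placing $\AA R_\BB(z)$ in the domain-interpolation space $X_{0,1/2}$ and using that on the fixed ball $B_{2R}$ the $X_{0,1/2}$-norm dominates $\langle y\rangle^{1/2}\|\cdot\|_{L^2}$); all remaining bookkeeping — the factorials from iterating the weight loss, the boundedness of $\AA$, and the geometric constants — is routine.
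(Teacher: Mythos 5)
Your treatment of \eqref{lem:RBXk} matches the paper's: both use $R_\BB(z)=-\int_0^\infty e^{-zt}S_\BB(t)\,dt$, observe that the first term in \eqref{eq:SBXk-jXk} is integrable because $\alpha<\alpha^*=1/(2(1-\gamma))$, and that the second term is integrable once $\alpha\gamma j>1$, which is achievable for $j>2(1-\gamma)/\gamma$; the resulting integral is $O(k^j)$. Likewise, the factorial bound $(\ell_1!\,\ell_2!)^j C^{\ell_1+\ell_2}$ without the $\langle y\rangle^{-1/2}$ via the ladder $X_0\to X_j\to\cdots\to X_{\ell j}$ (with $\AA$ resetting the weight) is exactly what the paper does.

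The gap is in the $\langle y\rangle^{-1/2}$ step. You write that ``on such functions one has the gain $\|\AA R_\BB(z)\,g\|_{L^2}\lesssim\langle y\rangle^{-1/2}\|g\|_{X_0}$,'' and that ``on the fixed ball $B_{2R}$ the $X_{0,1/2}$-norm dominates $\langle y\rangle^{1/2}\|\cdot\|_{L^2}$.'' Neither statement is correct as written: the first, read literally, asserts $\|\AA R_\BB(z)\|_{X_0\to L^2}\lesssim\langle y\rangle^{-1/2}$, which is false (this operator norm is $O(1)$, not decaying in $y$); the second is nonsensical because the $X_{0,1/2}$-norm is a fixed function-space norm with no $y$-dependence. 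The decay in $y$ has nothing to do with the \emph{output} of $\AA R_\BB$ being in a localized $H^1$; it comes entirely from the \emph{input} having extra regularity. The paper's argument is: from $R_\BB(z)=z^{-1}(R_\BB(z)\BB-I)$ one gets $\|R_\BB({\rm i}y)\|_{X_{0,1}\to X_j}\lesssim\langle y\rangle^{-1}$, and interpolating against the $O(1)$ bound $\|R_\BB({\rm i}y)\|_{X_0\to X_j}\le C\,j^j$ yields $\|R_\BB({\rm i}y)\|_{X_{0,1/2}\to X_j}\lesssim\langle y\rangle^{-1/2}$. The $\ell_2$-block is engineered to land in $X_{0,1/2}$ (its last factor $\AA R_\BB\colon X_\infty\to X_{0,1/2}$ uses the $H^1$-regularization of Lemma~\ref{lem:SBL2H1} together with the compact support of $\AA$), and then the \emph{first} resolvent of the $\ell_1$-block, reading from $X_{0,1/2}$, produces the $\langle y\rangle^{-1/2}$. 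So the split is at the middle $\AA$: $\|\AA R_\BB^{\ell_2}\|_{X_0\to X_{0,1/2}}\lesssim(\ell_2!)^jC^{\ell_2}$ and $\|\AA R_\BB^{\ell_1}\|_{X_{0,1/2}\to X_0}\lesssim(\ell_1!)^jC^{\ell_1}\langle y\rangle^{-1/2}$. Your proposal has the right ingredients but misassigns where and how they enter, and the estimate you propose to prove for $\AA R_\BB(z)$ on $X_0$ would fail if you tried to carry it out.
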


\begin{proof}[Proof of Lemma~\ref{lem:ARBk}.]  
We use the representation formula  
$$
R_\BB(z) = - \int_0^\infty {\rm e}^{-zt} \, S_\BB(t) \, dt
$$
together with the estimates established in Lemmas~\ref{lem:SBLpmLp}, \ref{lem:SBLpLq} and \ref{lem:SBL2weight} in the following way.
In order to simplify the presentation, we only consider in the sequel the boundary case $\Re e \, z = x= 0$. 

\smallskip
On the one hand, we define, as in Lemma~\ref{lem:SBL2weight}, $\alpha^* := 1/2(1-\gamma)$,  
so that $\alpha^* \gamma j > 1$, 
and we observe that the LHS term of \eqref{eq:SBXk-jXk} belongs to $L^1(\R_+)$.  Therefore, with the notations of Lemma~\ref{lem:SBL2weight},
 we have for any $y \in \R$
\begin{equation*}
\|R_\BB({\rm i}y) \|_{X_{k-j} \to X_k} 
\le C_1   + \Bigl( {k \over \kappa} \Bigr)^j \, {1 \over \alpha \gamma j -1} \le  C \, k^j . 
\end{equation*}

On the other hand, from \eqref{eq:SBL2H1}, we have  
\beqn\label{eq:RBXinftyH1}
\sup_{y \in \R} \| R_\BB  ({\rm i}y) \|_{X_\infty \to H^1} \le   \int_0^\infty \| S_\BB(t) \|_{X_\infty \to H^1}  \, dt  \le C.
\eeqn
The latter estimate together with \eqref{lem:RBXk} yield that,  for any $y \in \R$ and $\ell_{2} \in \N^*$, we have
\begin{align}
\|\AA R_\BB({\rm i}y)^{\ell_2}  \|_{ X_0\to X_{0,1/2} } &\leq 
\nonumber\\ 
&\hskip-3cm\le   
\|\AA R_\BB({\rm i}y) \|_{ X_\infty \to X_{0,1/2} } \, \| R_\BB ({\rm i}y) \|_{X_{(\ell_2 - 2) j} \to X_{(\ell_2-1) j} }  \,  \cdots \, \| R_\BB ({\rm i}y) \|_{X_0 \to X_j} 
\nonumber\\ 
&\le ( \ell_2!)^j C^{ \ell_2}.\label{eq:Estim-ell2}
\end{align}
On the other hand, from the identity 
$$
R_\BB(z) = z^{-1} (\RR_\BB(z)  \BB - I)
$$
and an interpolation argument, we deduce that 
$$
\|R_\BB({\rm i}y) \|_{X_{0,1/2} \to X_j}  \le C/ \langle y \rangle^{1/2},
$$
and therefore for any $\ell_{1}\in {\Bbb N}$
$$
\|  \AA R_\BB({\rm i}y)^{\ell_1}  \|_{ X_{0,1/2} \to X_0} \le ( \ell_1!)^j C^{ \ell_1}/ \langle y \rangle^{1/2}.
$$
It is now clear that the above estimate together with \eqref{eq:Estim-ell2} completes the proof of estimate \eqref{lem:ARBk-2}.
\end{proof}

\begin{lem}\label{lem:RL} Let us fix again an integer $j > 2(1-\gamma)/\gamma > 0$.   There exists a constant $C$ such that  for any   $\ell \in \N^*$, denoting by $\Pi$ the projection on $G$, that is $\Pi(f) := \M(f)G$, 
we have
\begin{equation}\label{eq:Estim-RL}
\sup_{  {\Re e}\, z > 0} \|  (I-\Pi) R_\LL(z) ^{ \ell} \|_{X_0 \to X_\infty } \le C^\ell \, (\ell !)^{j}.
\end{equation}
\end{lem}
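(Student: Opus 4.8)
The plan is to reduce the estimate to a ``weight‑climbing'' bound on the \emph{regularised resolvent} $\widetilde R(z):=(I-\Pi)R_\LL(z)$, and then to iterate that bound along the scale $(X_k)$ of \eqref{eq:defXk}. First I record the spectral facts I shall use. Since $S_\LL$ is bounded on $X_0$ (Lemma~\ref{lem:LpmLL}), one has $\{{\Re e}\,z>0\}\subset\rho(\LL)$ and $\|R_\LL(z)\|_{X_0\to X_0}\le C/{\Re e}\,z$. Writing $\LL-z=(\BB-z)(I+R_\BB(z)\AA)$ for ${\Re e}\,z>0$ and using that $R_\BB$ has power‑compact resolvent (Voigt~\cite{Voigt80}), a standard Fredholm/Weyl‑type argument together with Theorem~\ref{th:SigmaL} gives $\Sigma(\LL)\cap\{{\Re e}\,z\ge0\}=\{0\}$ and that $0$ is an isolated point of $\Sigma(\LL)$; since $N(\LL^n)={\rm span}(G)$ it is a simple pole of $R_\LL$, and one checks, using $\LL^*1=0$ and the analogue of Theorem~\ref{th:SigmaL} for $\LL^*$, that its residue is $-\Pi$, with $\Pi f=M(f)G$. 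In particular $\Pi$ commutes with $R_\LL$, $R_\LL(z)\Pi=-\Pi/z$, hence $\widetilde R\Pi=0$ and $\widetilde R$ extends holomorphically across $\{{\Re e}\,z=0\}$; moreover the purely algebraic identity $(I-\Pi)R_\LL(z)^\ell=\widetilde R(z)^\ell$ holds for every $\ell\ge1$. Thus it suffices to bound $\|\widetilde R(z)^\ell\|_{X_0\to X_\infty}$ by $C^\ell(\ell!)^j$ uniformly for ${\Re e}\,z>0$.

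\emph{Base estimate.} I claim $C_0:=\sup_{{\Re e}\,z>0}\|\widetilde R(z)\|_{X_0\to X_0}<\infty$; since $m_j\le m_0$ this gives $X_0\hookrightarrow X_j$ with norm $\le1$, whence also $\|\widetilde R(z)\|_{X_0\to X_j}\le C_0$. I split the half‑plane in three regions. On $\{|z|\le M,\ {\Re e}\,z>0\}$ this is just boundedness of the holomorphic $\BBB(X_0)$‑valued map $\widetilde R$ on a compact set. On $\{{\Re e}\,z\ge M/2\}$ it follows from $\|\widetilde R(z)\|_{X_0\to X_0}\le\|R_\LL(z)\|_{X_0\to X_0}+\|\Pi\|/|z|\lesssim 1/M$. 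Finally on $\{{\Re e}\,z\le M/2,\ |z|\ge M\}$ — where $|{\Im}\,z|$ is large — apply $(I-\Pi)$ to the iterated Duhamel identity
$$R_\LL=\sum_{k=0}^{N-1}(-1)^k R_\BB(\AA R_\BB)^{k}+(-1)^N R_\LL(\AA R_\BB)^N ;$$
routing each $R_\BB$‑block through $X_j$ by \eqref{lem:RBXk} (at the fixed target level $j$, uniformly in $z$) and using that $\AA$ is uniformly bounded between the $X_m$'s, the finite sum is bounded in $\BBB(X_0,X_j)$ uniformly in $z$, while pairing the factors of $\AA R_\BB$ two by two and invoking \eqref{lem:ARBk-2} gives $\|(\AA R_\BB(z))^N\|_{X_0\to X_0}\lesssim\langle{\Im}\,z\rangle^{-N/4}\le\tfrac12$ for $M$ large; hence the remainder $\widetilde R(z)(\AA R_\BB(z))^N$ is absorbed into the left side. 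This proves the claim.

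\emph{Weight‑climbing and iteration.} Next I show that for every integer $k\ge j$ and every $z$ with ${\Re e}\,z>0$ one has $\|\widetilde R(z)\|_{X_{k-j}\to X_k}\le C\,k^{j}$, with $C$ independent of $k$ and $z$. Apply $(I-\Pi)$ to the same Duhamel formula with $N$ fixed: in each term $(I-\Pi)R_\BB(\AA R_\BB)^{k'}$ the $R_\BB$ applied first carries $X_{k-j}$ into $X_k$ at cost $\lesssim k^j$ by \eqref{lem:RBXk}, all the remaining $R_\BB$'s act at the fixed level $\le j$ (cost $O(1)$) and all $\AA$'s are uniformly bounded, so the output lands in $X_j\subset X_k$; and the remainder factorises as $(\AA R_\BB(z))^N:X_{k-j}\to X_0$ (cost $\lesssim k^j$, same mechanism) followed by $\widetilde R(z):X_0\to X_j\hookrightarrow X_k$ (cost $\le C_0$ by the base estimate). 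Iterating this bound along the ladder $X_0\to X_j\to X_{2j}\to\cdots\to X_{\ell j}$,
$$\|\widetilde R(z)^\ell\|_{X_0\to X_{\ell j}}\le\prod_{i=1}^{\ell}C\,(ij)^{j}=(Cj^{j})^{\ell}\,(\ell!)^{j},$$
and since $\nu_{\ell j}\le e^{\kappa\xx^\gamma}=m_0^{1/2}$, i.e. $X_{\ell j}\hookrightarrow X_\infty$ with norm $\le1$, we conclude $\|(I-\Pi)R_\LL(z)^\ell\|_{X_0\to X_\infty}=\|\widetilde R(z)^\ell\|_{X_0\to X_\infty}\le(Cj^{j})^{\ell}(\ell!)^{j}$ uniformly for ${\Re e}\,z>0$, which is \eqref{eq:Estim-RL} after renaming the constant.

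The one genuinely delicate point is the base estimate: uniform boundedness of the regularised resolvent $\widetilde R$ in the \emph{small} space $X_0$ over the whole right half‑plane. Near $z=0$ this plays the role of a spectral gap and rests on the full spectral description of $\LL$ (Theorem~\ref{th:SigmaL} and its counterpart for $\LL^*$) together with the identification of the Riesz projection at $0$ with $\Pi f=M(f)G$; at infinity along the imaginary axis it rests on the decay $\langle{\Im}\,z\rangle^{-1/2}$ of the paired resolvent products \eqref{lem:ARBk-2}, which ultimately encodes the subcriticality $\gamma<1$. Everything downstream — the weight‑climbing estimate and its iteration — is routine manipulation of the Duhamel formula together with the continuous inclusions $X_{k-j}\hookrightarrow X_k\hookrightarrow X_\infty$.
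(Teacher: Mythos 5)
The central move in your proof is the claim that $0$ is an isolated point of $\Sigma(\LL)$ in $X_0$, that $\Pi$ is the Riesz spectral projection at $0$, and hence that $\widetilde R(z):=(I-\Pi)R_\LL(z)$ extends holomorphically across $\{{\Re e}\,z=0\}$ with $\sup_{{\Re e}\,z>0}\|\widetilde R(z)\|_{X_0\to X_0}=:C_0<\infty$. This is precisely what fails here. The paper warns about this explicitly in the introduction: the operator $\LL$ has a power-compact resolvent in the sense of Voigt, but ``more importantly $0$ is not necessarily an isolated point in the spectrum.'' A standard Fredholm/Weyl reduction together with Theorem~\ref{th:SigmaL} controls only the \emph{point} spectrum on $\{{\Re e}\,z\ge 0\}$; it does not exclude essential spectrum touching the imaginary axis (and indeed, for $\gamma<1$ the dissipativity coefficient $\psi^0_{m,p}\sim -a\langle x\rangle^{2(\gamma-1)}\to 0$, so there is no spectral gap in $X_0$). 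Your base estimate would actually contradict the theorem you are helping to prove: if $\sup_{{\Re e}\,z>0}\|(I-\Pi)R_\LL(z)\|_{X_0\to X_0}<\infty$ on the Hilbert space $X_0$, the Gearhart--Pr\"uss theorem would give \emph{exponential} decay of $S_\LL(t)(I-\Pi)$ in $\BBB(X_0)$, whereas Theorem~\ref{th:SLsmall} delivers only a stretched-exponential rate, and the factorial blow-up $(\ell!)^j$ in \eqref{eq:Estim-RL} is precisely the signature of the absence of a gap. Since your weight-climbing step routes $(\AA R_\BB)^N:X_{k-j}\to X_0$ and then appeals to $\widetilde R(z):X_0\to X_j$ ``at cost $\le C_0$'', the collapse of the base estimate takes down the entire argument.

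What the paper does instead is carefully avoid ever claiming an $X_0\to X_0$ bound. It proves directly the weight-climbing estimate $\|(I-\Pi)R_\LL(z)\|_{X_{k-j}\to X_k}\le C\,\CC_k$ with $\CC_k\sim k^j$, and the delicate region $\{0<{\Re e}\,z\le 1,\,|{\Im m}\,z|\le M\}$ is handled by a compactness-contradiction argument (Step~1 of the paper's proof): if the bound failed along $z_n\to iy$, one builds $(f_n)$ with $\|f_n\|_{X_n}=1$ and $(\LL_1-z_n)f_n\to 0$ in a controlled sense, extracts a weak limit $f\ne 0$ in $X_\infty$ (the loss of weight from $X_{k-j}$ to $X_k$ is exactly what furnishes the needed compactness via $R_\BB\AA$), and obtains an eigenvector of $\Pi^\perp\LL$ on $i\R$, contradicting Theorem~\ref{th:SigmaL}. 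That weak-compactness mechanism, which replaces the (unavailable) holomorphic extension across $i\R$, is the missing idea in your proof. The high-imaginary-part region (Neumann series using the $\langle y\rangle^{-1/2}$ decay of \eqref{lem:ARBk-2}) and the final ladder iteration $X_0\to X_j\to\cdots\to X_{\ell j}\hookrightarrow X_\infty$ yielding $(\ell!)^j$ are both correct in your write-up and essentially match the paper; the gap is entirely in the near-axis control at $\ell=1$.
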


\begin{proof}[Proof of Lemma~\ref{lem:RL}.]  
Since the operator $\LL-aI$ is dissipative for any $a > 0$, we clearly have 
\bean
C_{\LL,a} := \sup_{  {\Re e}\, z \geq a}  \|  (I-\Pi) R _\LL(z)  \|_{X_0 \to X_\infty } \lesssim 
\sup_{  {\Re e}\, z \geq a}  \|R _\LL(z)  \|_{X_0 \to X_\infty } < \infty, 
\eean
and thus we have only to prove that the constant $C_{\LL,a}$ does not blow up when $a \to 0^+$. 

\smallskip
\noindent
{\bf Step 1. }  We claim that for any fixed $M$, there holds 
\begin{equation}\label{eq:Estim-Mkj}
\sup_{z \in \Omega_M} \|  (I-\Pi) R_\LL(z)  \|_{X_{k-j} \to X_k }  \le C_M \, \CC_k, 
\end{equation}
where we define $ \Omega_M := \{  z = x + {\rm i}y \in \C, \,\, 0 < x \le 1, \, |y|  \le M \}$ and we recall that $\CC_k$ 
is defined in \eqref{lem:RBXk}. We argue by contradiction, assuming that 
there exists $y \in [-M,M]$ and a sequence $(z_n)$ in $\Omega_M$ such that 
$$
z_n \to z:= {\rm i}y \quad\mbox{and}\quad \CC_n^{-1} \|  R_{\LL_1}  (z_n) \|_{\BBB(X_{n-j},X_n)} \to \infty,
$$
with $\LL_1 := \Pi^\perp \LL$, where for brevity we note $\Pi^\perp := I - \Pi$, despite the fact that $\Pi$ is not an orthogonal projection. 
The last family of  blowing up estimates means that there exist sequences $(\tilde f_n)_{n}$ and $(\tilde g_n)_{n}$ such that 
$$
\CC^{-1}_n \|  \tilde f_n \|_{X_n} \to \infty,  \quad  \|  \tilde g_n \|_{X_{n-j}} = 1, \quad   \tilde f_n = R_{\LL_1}  (z_n) \, \tilde g_n,
$$
or, equivalently, that there exist $(f_n)_{n}$ in $X_n$ and $(g_n)_{n}$ in $X_{n-j}$ satisfying 
$$
 \| f_n \|_{X_n} = 1,  \quad \CC_n \|  g_n \|_{X_{n-j}} \to 0 , \quad g_n = (\LL_1 - z_n) \, f_n .
 $$
This in turn would imply that
\beqn\label{eq:RL1}
R_\BB(z_n) \AA \Pi^\perp f_n + \Pi^\perp f_n - z_n R_\BB(z_n) \Pi f_n =  R_\BB(z_n) g_n  ,
\eeqn
with 
$$
\|   R_\BB(z_n) g_n \|_{X_n}  \le  \CC_n \|  g_n \|_{X_{n-j}}   \to 0,
$$
by using \eqref{lem:RBXk}. Since $(f_n)_{n}$ is bounded in $X_n \subset X_\infty = L^2({\rm e}^{\kappa \, \langle x \rangle^\gamma})$, by weak compactness of this sequence in $X_{\infty}$, we find $f \in X_\infty$ and a subsequence denoted again by $(f_n)_{n}$ such that $f_n \wto f$ weakly in $X_\infty$, and then 
$\AA \Pi^\perp f_n \wto  \AA \Pi^\perp $ weakly in $X_0$. 
Together with \eqref{eq:RBXinftyH1}, we deduce that 
\beqn\label{eq:RL2}
R_\BB(z_n) \AA \Pi^\perp f_n \to R_\BB(z ) \AA \Pi^\perp f  \quad\mbox{ strongly in } \, X_0.
\eeqn
Now, passing (weakly) to the limit in \eqref{eq:RL1}, we have 
$$
 R_\BB(z) \AA \Pi^\perp f + \Pi^\perp f - z R_\BB(z) \Pi f = 0.
$$
We claim that $f \not= 0$. If not, we get from \eqref{eq:RL2} that  
$$ R_\BB(z_n) \AA \Pi^\perp f_n \to 0\qquad
\mbox{and}\qquad
\Pi f_n  \to  0\quad\mbox{strongly in }\,X_0$$
 and then together with \eqref{eq:RL1} that $\| \Pi^\perp f_n \|_{X_n} \to 0$. Thus we would have 
 $$ 1 = \|f_n\|_{X_n}  \le \|\Pi f_n \|_{X_0} + \|\Pi^\perp f_n \|_{X_n}  \to 0,$$
 which is a contradiction.
 As a consequence, we have exhibited an $f \in X_\infty \backslash \{  0 \}$ such that  $(\LL_1 - zI) f = 0$. This means that $f$ is an eigenvector for $\LL_1 = \Pi^\perp\LL$  associated to an eigenvalue $z \in {\rm i}\R$; however this is in contradiction with the fact that $\Sigma_{P}(\Pi^\perp\LL) \cap {\rm i} \R = \emptyset$. Thus the proof of \eqref{eq:Estim-Mkj} is complete. 

\smallskip
\noindent{\bf Step 2. }  In this step we complete the proof of the Lemma. We begin by recalling that $\LL = \AA + \BB$ and then we write 
$$
R_\LL(z) = R_\BB(z) - R_\LL(z) \AA R_\BB(z)
$$
and 
$$
R_\LL(z) (1 - \VV(z)) = R_\BB(z) - R_\BB(z) \, \AA \, R_\BB(z), \qquad\mbox{where }\,
\VV(z) := \left(\AA \, R_\BB(z)\right)^2.
$$
Let $\tilde X_k$ be defined as $X_k$ but with a coefficient $\tilde \kappa > \kappa$ so that $\tilde X_k \subset \tilde X_\infty \subset X_0 \subset X_k$ with embedding constants uniformly bounded with respect to $k$.
First we may fix $M$ large enough such that $\|\VV(z) \|_{\BBB(X_0)} \le 1/2$ and $\|\VV(z) \|_{\BBB({\tilde X}_0)} \le 1/2$, for any $z = x + {\rm i}y$, with $|y| \ge M$: this is indeed possible thanks to \eqref{lem:ARBk-2} by choosing $\ell_1 = \ell_2 = 1$. Next, we write the  expansion
$$
R_\LL (z) = R_\BB (z)-  \bigl( R_\BB (z)- R_\BB (z)\, \AA \, R_\BB (z)\bigr) \Bigl( \sum_{j=0}^\infty \VV(z)^j \Bigr) \AA R_\BB(z), 
$$
where the series converges normally in $\BBB(X_0)$ and in $\BBB({\tilde X}_{0})$. More precisely, for $z = x + {\rm i}y$ and $|y| \geq M$ we have 
\begin{align}
\| R_\LL (z) \|_{X_{k-j} \to X_k} &\le \| R_\BB (z) \|_{X_{k-j} \to X_k}  
\nonumber\\
&\hskip-1cm +  \| (R_\BB - R_\BB \, \AA \, R_\BB)(z)\|_{\tilde X_0 \to X_0}  \Bigl( \sum_{j=0}^\infty \| \VV(z) \|_{\BBB(\tilde X_0)}^j \Bigr) \, \| \AA R_\BB \|_{X_{k-j} \to \tilde X_0} , \nonumber\\
&\hskip-1cm\leq \| R_\BB (z) \|_{X_{k-j} \to X_k} + 2 \, 
\|(R_\BB - R_\BB \, \AA \, R_\BB)(z)\|_{\tilde X_0 \to X_0} \,
\|\AA R_\BB \|_{X_{k-j} \to \tilde X_0}. \label{eq:estim-Serie}
\end{align}
The right hand side of the above inequality being bounded by a constant $C \, \CC_k$, we conclude that  
$$
\sup_{| {\Re e}\, z| \geq M} \|(I-\Pi) R_\LL(z)\|_{X_{k-j} \to X_k }  \le C  \, \CC_k. 
$$
Together with \eqref{eq:Estim-Mkj}, we conclude the proof of \eqref{eq:Estim-RL} in the case when $\ell=1$. For the general case $\ell \ge 1$, we 
argue similarly as we did in the proof of \eqref{lem:ARBk-2}. 
\end{proof}

\begin{theo}\label{th:SLsmall} Let $\sigma^*_\LL := 1/\lfloor 2/\gamma\rfloor$, where $\lfloor s \rfloor$ stands for the integer part of the real number $s$, 
and let $m_{0}(x) := \exp(\kappa \xx^\gamma)$ and $0 < \kappa\gamma < 1/8$. Then for any $\sigma \in (0,\sigma^*_\LL]$ and $\theta < 1$ there exist $\lambda > 0$ such that for all $t > 0$
$$
\|f(t) - \M(f_0)\, G \|_{L^2(m_{0}^\theta)} \lesssim \exp(-\lambda t^\sigma) \,\|f_0 - \M(f_0)\, G \|_{L^2(m_{0})}.
$$
\end{theo}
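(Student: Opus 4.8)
\medskip
\noindent\textbf{Proof strategy.} The plan is to estimate $S_\LL(t)-\Pi$, where $\Pi f:=M(f)\,G$ is, by Theorem~\ref{th:SigmaL}, the spectral projection of $\LL$ onto $N(\LL)={\rm span}(G)$. Mass conservation gives $S_\LL(t)-\Pi=(I-\Pi)S_\LL(t)$, and since $f(t)-M(f_0)G=(S_\LL(t)-\Pi)(I-\Pi)f_0$ while $f_0-M(f_0)G=(I-\Pi)f_0$, it suffices to bound $S_\LL(t)-\Pi$ from $X_0:=L^2(m_0)$ into $L^2(m_0^\theta)$; here I identify $L^2(m_0)$ with $X_0$ and $L^2(m_0^{1/2})$ with $X_\infty$ of Section~\ref{sec:NonPoincare}, which is legitimate since the condition $\kappa\gamma<1/8$ leaves enough room in the scale $(X_k)$ of \eqref{eq:defXk}. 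First I would apply $(I-\Pi)$ to the iterated Duhamel formula \eqref{eq:Duhamel-gene} with $n_1=0$ and $n_2=N:=6$, splitting
\begin{equation*}
S_\LL-\Pi=\underbrace{\sum_{k=0}^{N-1}(I-\Pi)\,S_\BB*(\AA S_\BB)^{(*k)}}_{=:\,T_{\mathrm{reg}}}\;+\;\underbrace{\bigl[(I-\Pi)S_\LL\bigr]*(\AA S_\BB)^{(*N)}}_{=:\,T}\,.
\end{equation*}
Since $\Pi$ is bounded on each $L^2(m_0^\theta)$ and $\AA$ is a bounded multiplication operator supported in a fixed ball, Lemmas~\ref{lem:SBLpmLp} and \ref{lem:SBLpLq} give $\|\AA S_\BB(t)\|_{L^2(m_0)\to L^2(m_0)}\lesssim{\rm e}^{-\lambda t^{\sigma^*_\BB}}$ and $\|(I-\Pi)S_\BB(t)\|_{L^2(m_0)\to L^2(m_0^\theta)}\lesssim{\rm e}^{-\lambda t^{\sigma^*_\BB}}$ with $\sigma^*_\BB=\gamma/(2-\gamma)$, and controlling the convolutions by $(t-s)^\sigma+s^\sigma\ge(t/2)^\sigma$ ($0\le s\le t$, $\sigma\in(0,1)$) yields $\|T_{\mathrm{reg}}(t)\|_{L^2(m_0)\to L^2(m_0^\theta)}\lesssim{\rm e}^{-\lambda' t^{\sigma^*_\BB}}$. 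As $\sigma^*_\BB=\gamma/(2-\gamma)>1/\lfloor 2/\gamma\rfloor=\sigma^*_\LL$ (both as in \eqref{eq:ExpoCri-2}), this term decays strictly faster than required.

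The core of the argument is $T$. Because $\LL$ generates a bounded semigroup, $\Sigma(\LL)\subset\{{\Re e}\,z\le 0\}$, the Laplace transform of $(I-\Pi)S_\LL$ is $-(I-\Pi)R_\LL(z)$ on $\{{\Re e}\,z>0\}$, and for every $a>0$ and $n\in\N$ the Laplace inversion formula, integrated by parts $n$ times in $z$ (the boundary terms vanishing), gives
\begin{equation*}
\|T(t)\|_{X_0\to X_\infty}\;\le\;\frac{1}{2\pi\,t^{\,n}}\int_{\R}\Bigl\|\frac{d^{\,n}}{dz^{\,n}}\bigl[(I-\Pi)R_\LL(z)(\AA R_\BB(z))^{N}\bigr]\Bigr\|_{X_0\to X_\infty}\,dy,
\end{equation*}
where $z=a+{\rm i}y$ in the integrand. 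By the Leibniz rule and $\frac{d^{\,a}}{dz^{\,a}}R_\Lambda(z)=a!\,R_\Lambda(z)^{a+1}$, this $n$-th derivative is $n!$ times the sum over $a_0+\dots+a_N=n$ of the operators $(I-\Pi)R_\LL(z)^{1+a_0}\AA R_\BB(z)^{1+a_1}\cdots\AA R_\BB(z)^{1+a_N}$. I would bound $(I-\Pi)R_\LL(z)^{1+a_0}$ from $X_0$ to $X_\infty$ by Lemma~\ref{lem:RL}, group the $N=6$ remaining factors into three consecutive pairs and bound each pair from $X_0$ to $X_0$ by \eqref{lem:ARBk-2}, which supplies a gain $\langle y\rangle^{-1/2}$ per pair together with factorial factors of the form $(\ell!)^{j}$, where $j$ is the \emph{smallest} integer with $j>2(1-\gamma)/\gamma$, so that $j+1=\lfloor 2/\gamma\rfloor$. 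Using $\prod_i a_i!\le n!$, $\prod_i(1+a_i)\le(1+n)^{N+1}$ (AM--GM), and that the number of multi-indices is $\le(1+n)^{N}$, this leads, uniformly in $a\in(0,1]$, to a bound of the integrand by $C_0^{\,n}(n!)^{\,j+1}\langle y\rangle^{-3/2}$; since $\langle y\rangle^{-3/2}\in L^1(\R)$ and the estimate is uniform in $a\in(0,1]$ (with ${\rm e}^{at}\downarrow1$ as $a\downarrow0$), integrating in $y$ gives $\|T(t)\|_{X_0\to X_\infty}\lesssim C_0^{\,n}(n!)^{\,j+1}/t^{\,n}$ for all $n\in\N$, $t>0$.

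It then remains to optimise over $n$: by Stirling this right-hand side is, up to a polynomial factor in $n$, of the form $\bigl(C_0\,n^{\,j+1}/(e^{\,j+1}t)\bigr)^{n}$, so the choice $n\simeq(t/C_0)^{1/(j+1)}$ gives $\|T(t)\|_{X_0\to X_\infty}\lesssim{\rm e}^{-\lambda t^{1/(j+1)}}={\rm e}^{-\lambda t^{\sigma^*_\LL}}$ (for bounded $t$ one simply uses $\|S_\LL(t)-\Pi\|_{X_0\to X_\infty}\lesssim1$). Together with the estimate on $T_{\mathrm{reg}}$ this proves $\|S_\LL(t)-\Pi\|_{X_0\to X_\infty}\lesssim{\rm e}^{-\lambda t^{\sigma^*_\LL}}$, which is the claim for $\theta=1/2$. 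For $\theta\le1/2$ it follows a fortiori from $m_0^\theta\le m_0^{1/2}$; for $1/2<\theta<1$ I would interpolate (Hilbertian interpolation, as after \eqref{eq:defXk}) this bound with the trivial one $\|S_\LL(t)-\Pi\|_{X_0\to X_0}\lesssim1$ (valid by Lemma~\ref{lem:LpmLL} and the integrability of $t\mapsto\|(\AA S_\BB)^{(*N)}(t)\|_{X_0\to X_0}$), obtaining $\|S_\LL(t)-\Pi\|_{X_0\to L^2(m_0^\theta)}\lesssim{\rm e}^{-\lambda_\theta t^{\sigma^*_\LL}}$; and ${\rm e}^{-\lambda_\theta t^{\sigma^*_\LL}}\lesssim{\rm e}^{-\lambda t^{\sigma}}$ for any $0<\sigma\le\sigma^*_\LL$, which is the statement. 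The step I expect to be hardest is the Leibniz bookkeeping for $T$: one must control the operator norm of $\frac{d^{\,n}}{dz^{\,n}}$ of the resolvent product so as to produce \emph{exactly} the growth $(n!)^{\,j+1}$ — a power $(n!)^{\,j+2}$ would only yield the weaker rate $t^{1/(j+2)}$ — which forces the use of the sharp "$j$-th power of factorial" form of Lemmas~\ref{lem:RL} and \ref{lem:ARBk}, the inequality $\prod_i a_i!\le(\sum_i a_i)!$, and precisely the smallest admissible $j$, while keeping $N\ge6$ so that the $y$-integral converges.
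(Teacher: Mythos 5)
Your proposal is correct and follows essentially the same route as the paper: the same Duhamel split with $n_1=0$, $n_2=6$ (giving the representation \eqref{eq:SL=2}), the same Laplace-inversion/$n$-fold integration-by-parts/Leibniz strategy for the remainder, the same pairing of the $\AA R_\BB$ factors (three pairs, gain $\langle y\rangle^{-3/2}$) with the factorial bookkeeping from Lemmas~\ref{lem:ARBk} and~\ref{lem:RL} to obtain the sharp $(n!)^{1+j}$ growth, and the same optimization $n\simeq (t/C_0)^{1/(j+1)}$ with $j+1=\lfloor 2/\gamma\rfloor$. Your handling of general $\theta<1$ by Hilbertian interpolation between the $X_0\to X_\infty$ decay and the trivial $X_0\to X_0$ bound fills in a step the paper leaves implicit, but the essential argument is identical.
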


\begin{proof}[Proof of Theorem~\ref{th:SLsmall}.]  
We write the representation formulas (taken from \cite{MS*}) 
\begin{align}
& S_\LL(t)f =  \Pi  f + \sum_{\ell=0}^{5} (I-\Pi)  S_\BB *  (\AA S_\BB)^{(*\ell)} (t)f + \TT(t)f, \label{eq:SL=2} \\
&\mbox{where }\, \TT(t) := \lim_{M \to \infty}  {{\rm i} \over  2\pi}
 \int_{a - {\rm i}M}^{a + {\rm i}M}\!\!\!\!  {\rm e}^{zt} \,  
(I-\Pi) \RR_\LL(z) \, (\AA \RR_\BB(z))^{6}  \, dz, \nonumber
\end{align}
for any $f \in D(\LL)$,  $t \ge 0$ and $a > 0$.  

\smallskip
Thanks to Lemma~\ref{lem:SBLpLq}, we know that  each term $\|(I-\Pi)  S_\BB *  (\AA S_\BB)^{(*\ell)} (t)f\|$ in the above expression of $S_{\LL}(t)$ is bounded by 
$\OO(\Theta_{m_0}(t))$. 
In order to conclude, we have to estimate the last term. 

\smallskip
We  introduce the shorthands  $\Phi_1 := R_\LL(I-\Pi)$, $\Phi_{\ell} = \AA R_\BB$, for $2 \le \ell \le 7$, and 
we perform $n$ integration by part in the formula giving $\TT(t)$ to get 
\begin{equation}
  \TT(t) =   {{\rm i} \over  2\pi}
  {1 \over t^n}   \int_{a - {\rm i}\infty}^{a + {\rm i}\infty}  {\rm e}^{zt} \,  
 {d^n \over dz^n} \bigl(  \prod_{i=1}^7 \Phi_i(z)   \bigr)   \, dz.
\end{equation}
Using the fact that all the functions appearing in the integral  are bounded on the imaginary axis, together with the resolvent identity
$$
R_\Lambda^{n} (z) := {d^n \over dz^n}  R_\Lambda(z) = n!  R_\Lambda(z)^n, 
$$
we find in $\BBB(X_0)$, thanks to Leibniz formula and for any $z = x + {\rm i}y \in {\Bbb C}$ with $0 \leq y \leq 1$,
\begin{align*}
\|{d^n \over dz^n} \bigl(  \prod_{\ell=1}^7 \Phi_\ell(z)\bigr)  \| 
& \le 7^n \sup_{\alpha \in \N^{n}, |\alpha| =  n}   \|  \prod_{\ell=1}^7  {d^{\alpha_\ell} \over dz^{\alpha_\ell}} \Phi_\ell (z) \| 
\\
&\hskip-2cm \le
7^n \, \sup_{\alpha \in \N^{7}, |\alpha| =  n}   \prod_{i=1}^7   \| \alpha_1 ! (I-\Pi) \, R_\LL^{1+\alpha_1}  \alpha_2 ! \AA R_\BB^{1+\alpha_2}  \, ... \, \alpha_7! \AA R_\BB^{1+\alpha_7}  (z)\| 
\\
&\hskip-2cm\le C^n \, (n!)^{1+j}  \, \langle y \rangle^{-3/2}\, ,
\end{align*}
where in the last step we have used Lemma \ref{lem:RL} for some integer $j$ which will be fixed later.
Next,  using the bound $n! \le (C \, n)^n$, we get 
$$
\|\TT(t) \|  \le  C^n \, n^{(1+j)n}  \, t^{-n} \qquad \forall \, t > 0, \,\, \forall \, k \ge 0.
$$
For any $t \geq t^*$, where $t^*$ is large enough and depends on $j$, we choose $n \in \N$ such that 
$$
t \ge 2 C n^{1+j} \ge t/2, 
$$
and we obtain 
$$
\|\TT(t) \|\le (Cn^{1+j} t^{-1})^n \le (1/2)^n  \le  (1/2)^{(t/4C)^{1 \over j+1}} \qquad \forall \, t > 0.
$$
As a consequence, with the choice $j := \lfloor[2(1-\gamma)/\gamma\rfloor + 1$,  we have proved that for all $t \geq 0$ we have
$$
\|\TT(t) \|  \le   \exp(-\lambda \, t^{1 \over 1+j}), 
$$
which clearly ends the proof. 
\end{proof}

%
%
%
%
%
%
%
%
%

\bigskip 
\bibliographystyle{acm}

\end{document}